\documentclass[12pt]{amsart}

\usepackage{amstext,amsfonts,amssymb,amscd,amsbsy,amsmath,verbatim}
\usepackage[backrefs,lite]{amsrefs} %bibliography
\usepackage{ifthen, fullpage}
\usepackage{extarrows}
\usepackage{color,tikz,multirow}
\usepackage{amsthm}
\usepackage{latexsym}
\usepackage[all]{xy}
\usepackage{enumerate}

\newtheorem{lemma}{Lemma}[section]
\newtheorem{theorem}[lemma]{Theorem}

\newtheorem{prop}[lemma]{Proposition}
\newtheorem{cor}[lemma]{Corollary}

\newtheorem{claim*}{Claim}
\newtheorem{thm}[lemma]{Theorem}

\theoremstyle{definition}

\theoremstyle{remark}
\newtheorem{defn}[lemma]{Definition}
\newtheorem{example}[lemma]{Example}
\newtheorem{remark}[lemma]{Remark}

% Commands

\newcommand{\Spec}{\operatorname{Spec}}

\newcommand{\Proj}{\operatorname{Proj}}

\newcommand{\Pic}{\operatorname{Pic}}

\newcommand{\NE}{\operatorname{NE}}

\newcommand{\Tor}{\operatorname{Tor}}
\newcommand{\Tot}{\operatorname{Tot}}

\newcommand{\Hom}{\operatorname{Hom}} %done
 %done

\newcommand{\kk}{\Bbbk}

\newcommand{\rank}{\operatorname{rank}}

\newcommand{\codim}{\operatorname{codim}}
\newcommand{\PP}{\mathbb{P}}
\renewcommand{\AA}{\mathbb{A}}
\newcommand{\GG}{\mathbb{G}}
\newcommand{\HH}{\mathrm{H}}

\newcommand{\ZZ}{\mathbb{Z}}
\newcommand{\QQ}{\mathbb{Q}}
\newcommand{\UU}{\mathrm{U}}
\newcommand{\VV}{\mathrm{V}}
\newcommand{\WW}{\mathrm{W}}

\newcommand{\NN}{\mathbb{N}}

\newcommand{\cc}{c}
\newcommand{\dd}{\mathbf{d}}
\newcommand{\ee}{\mathbf{e}}

\newcommand{\cO}{\mathcal{O}}
\newcommand{\cE}{\mathcal{E}}
\newcommand{\cF}{\mathcal{F}}
\newcommand{\cU}{\mathcal{U}}

\newcommand{\bK}{\mathbf{K}}

\newcommand{\FF}{\mathbf{F}}
\newcommand{\Gbull}{\mathbf{G}}

 %done  
\newcommand{\GL}{{GL}}

\newcommand{\defi}[1]{\textsf{#1}} % for defined terms

\newcommand{\zp}{\circ}
\newcommand{\nothing}{\emptyset}

\newcommand{\DD}{\mathrm{D}}

\newcommand{\CQ}{\mathrm{C}}
\newcommand{\CvbQ}{\mathrm{C}_{\text{vb}}}
\newcommand{\BBQ}{\mathrm{B}}
\newcommand{\BBirr}{{\mathrm{B}}^{\text{irr}}}

\renewcommand{\P}{{\mathbb P}}

\def\BS{Boij--S\"oderberg~}

\title{Categorified duality in Boij--S\"oderberg Theory and invariants of free complexes}

\author{David Eisenbud}
\address{Department of Mathematics, University of California, Berkeley, CA 94720, USA}
\email{de@math.berkeley.edu}

\author{Daniel Erman}
\address{Department of Mathematics, University of Wisconsin, Madison, WI 53706, USA}
\email{derman@math.wisc.edu}
\urladdr{http://www.math.wisc.edu/~derman/}
\thanks{The first author was partially supported by an NSF grant, and the second author was partially supported by a Simons Foundation fellowship and by NSF grant DMS-1302057.}

%%%%%%%%%%%%%%%%%%%%%%%%%%%%%%%%%%%%%%%%%%%%%%%%%%%%%%%
%%%%%%%%%%%%%%%%%%%%%%%%%%%%%%%%%%%%%%%%%%%%%%%%%%%%%%%
%%%%%%%%%%%%%%%%%%%%%%%%%%%%%%%%%%%%%%%%%%%%%%%%%%%%%%%
%%%%%%%%%%%%%%%%%%%%%%%%%%%%%%%%%%%%%%%%%%%%%%%%%%%%%%%
\begin{document}

\begin{abstract} 
We present a robust categorical foundation for the duality theory introduced by Eisenbud and Schreyer to prove the \BS conjectures describing numerical invariants of syzygies.
The new foundation allows us to extend the reach of the theory substantially.

More explicitly, we construct a pairing between derived categories that
simultaneously categorifies all the functionals used by Eisenbud and Schreyer.
With this new tool, we describe the cone of Betti tables of finite, minimal free complexes  having homology modules of specified dimensions over a polynomial ring, and we treat many examples beyond polynomial rings.  We also construct an analogue of our pairing between derived categories on a toric variety, yielding
toric/multigraded analogues of the Eisenbud--Schreyer functionals.

\end{abstract}

\maketitle

\vspace{-1cm}
\tableofcontents

%%%%%%%%%%%%%%%%%%%%%%%%%%%%%%%%%%%%%%%%%%%%
%%%%%%%%%%%%%%%%%%%%%%%%%%%%%%%%%%%%%%%%%%%%

\vspace{-1cm}
\section*{Introduction}
The Hilbert polynomial is a fundamental invariant of graded modules or coherent sheaves on projective space. This invariant is refined in two different ways by the Betti table of a graded module and the cohomology table of a coherent sheaf. Work of Eisenbud and Schreyer \cite{eis-schrey1} suggested a duality between these refinements that involves an infinite collection of bilinear pairings. In this paper we clarify the duality, showing that it is embodied in
a single pairing between derived categories.

Eisenbud and Schreyer's original goal was to prove the \BS conjectures~\cite{boij-sod1}
which describe the possible values of Betti tables of finite free resolutions of modules of finite length over a polynomial ring, up to scalar multiple.  
Now known as \BS theory, these results~\cite{efw,eis-schrey1} were subsequently extended to cover all free resolutions over a polynomial ring~\cite{boij-sod2,eis-schrey2},
special cases of resolutions over other rings~\cite{bbeg,beks-local} and over multigraded rings~\cite{boij-floystad,floystad-multigraded}. The theory has developed in other directions as well~\cite{beks-tensor, ees-filtering, erman-semigroup,nagel-sturgeon,sam-weyman}. All these developments rely on the foundations established by Eisenbud and Schreyer. 

Our categorification of the duality gives a new foundation for all of these developments and allows us to substantially extend the reach of the theory.  In particular we are able to characterize the cone of Betti tables of complexes with homology of a given codimension.
We also treat many rings other than polynomial rings, and we present a framework for an extension to the multigraded case of toric varieties.

In Part I of this paper, 
we construct the pairing, which takes values in a derived category of graded modules over a polynomial ring in 1 variable.

In Part II, we use the pairing to extend from the consideration of free resolutions to the consideration of more general complexes.
We treat the Betti numbers of finite free complexes with prescribed homology  (\S\ref{sec:refined}--\ref{sec:monads}) and clarify the Eisenbud--Schreyer duality results (\S\ref{sec:functionals}--\ref{sec:duality}).  

In Part III, we extend the theory to a wider class of graded rings (\S\ref{sec:functor}), and we discuss some applications to the study of infinite resolutions (\S\ref{sec:infinite}).  Lastly, we explain a natural generalization to the multigraded case (\S\ref{sec:toric}).

\addtocontents{toc}{\protect\setcounter{tocdepth}{-1}}
\subsection*{Categorifying the Eisenbud--Schreyer Duality}
\addtocontents{toc}{\protect\setcounter{tocdepth}{1}}
Let $\kk$ be a field. Let $A= \kk[t]$ and $S=\kk[x_0, \dots, x_n]$ be the polynomial rings in 1 and $n+1$ variables. If 
$$
\FF= [\cdots \gets \FF_i \gets \FF_{i+1}\gets \cdots ]
$$
is a bounded complex of finitely generated graded free $S$-modules, then $\beta_{i,j}\FF$ is defined to be the dimension of the degree $j$ component of the graded vector space $\Tor_i(\FF,\kk)$.  The \defi{Betti table} of $\FF$ is the vector with coordinates $\beta_{i,j}\FF$ in the vector space $\VV = \oplus_{i\in \ZZ} \oplus_{j\in \ZZ}\QQ$. Similarly, the \defi{cohomology table} of a bounded complex of coherent sheaves $\cE$ on $\PP^{n}$ is the vector with coordinates $\gamma_{i,j}\cE := h^{i}\cE(j)$ in the vector space $\WW = \oplus_{i\in \ZZ}\prod_{j\in \ZZ}\QQ$, where $h^{i}\cE(j)$ denotes the dimension of the $i$-th hypercohomology of the complex $\cE(j) := \cE \otimes \cO_{\PP^{n}}(j)$. 

Let $\DD^{b}(\P^{n}), \DD^{b}(S), \DD^{b}(A)$ denote the bounded derived categories of the categories of coherent sheaves on $\P^{n}$ and of finitely generated graded modules over $S$ and $A$, respectively.  Given a complex $\FF\in \DD^b(S)$, we write $\widetilde \FF$ for the corresponding complex of sheaves on $\PP^{n}$. 

The central construction of this paper is a functor
$$
\xymatrix{\DD^{b}(S)\times \DD^{b}(\PP^{n})  \ar[r]^-{\Phi}&\DD^{b}(A),}
$$
with the following properties:
\begin{theorem}\label{thm:Phi} If $\FF$ is a bounded complex of free graded $S$-modules and $\cE$ is a bounded complex of coherent sheaves on $\P^{n}$ then:
\begin{enumerate}
	\item\label{thm:Phi:1}  The Betti table of $\Phi(\FF,\cE)$ depends only on the Betti table of $\FF$ and the cohomology table of $\cE$.
	\item\label{thm:Phi:2}  If $\widetilde{\FF}\otimes \cE$ is exact, then $\Phi(\FF,\cE)$ is generically exact.  
\end{enumerate}
\end{theorem}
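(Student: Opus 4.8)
The plan is to work with the model of $\Phi(\FF,\cE)$ furnished by its construction: a complex of free graded $A$-modules whose free summands are assembled, twist by twist, out of the cohomology groups $H^{q}(\widetilde{\FF}_{i}\otimes\cE(j))$ — that is, out of the twists occurring in $\FF$ together with the numbers $h^{q}\cE(j)$ — with differential glued from the differential of $\FF$ and from the internal (Tate-resolution/monad-type) maps attached to $\cE$. Since $\Phi$ is a functor of triangulated categories in each variable, replacing $\FF$ by a minimal free complex quasi-isomorphic to it, or $\cE$ by any object representing the same class in $\DD^{b}(\PP^{n})$, changes neither $\beta(\FF)$, $\gamma(\cE)$, nor $\Phi(\FF,\cE)$ up to isomorphism in $\DD^{b}(A)$; so I would make those replacements at the outset.

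For part (1): with $\FF$ minimal and the data attached to $\cE$ taken minimal as well, I would show that $\Phi(\FF,\cE)$ is itself a minimal complex of free graded $A$-modules. The differential entries coming from $\FF$ are images of forms of positive degree, so they land in $\m_{A}=(t)$; the entries coming from $\cE$ lie in $(t)$ by minimality of the $\cE$-side data. Since $A=\kk[t]$ is $*$-local with graded maximal ideal $(t)$, a minimal complex of free graded $A$-modules is unique up to isomorphism, and its Betti table merely records the ranks and twists of its terms — which, by the shape of the construction, are functions of $\beta(\FF)$ (which twists appear in which homological degree, with what multiplicity) and of $\gamma(\cE)$ (the $h^{q}\cE(j)$). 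Hence $\beta(\Phi(\FF,\cE))$ depends only on $\beta(\FF)$ and $\gamma(\cE)$. (Alternatively, one computes $\Phi(\FF,\cE)\otimes^{\mathbf L}_{A}\kk$ directly and checks it is built functorially from $\FF\otimes^{\mathbf L}_{S}\kk=\Tor^{S}(\FF,\kk)$ and from $\cE$, then that the $\cE$-dependence factors through $\gamma(\cE)$.) The delicate point is the minimality claim, which is exactly where one must pin down the precise form of the glued differential.

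For part (2): here ``generically exact'' means exact after inverting $t$, equivalently that each $H_{i}(\Phi(\FF,\cE))$ is $t$-power-torsion, equivalently — these being finitely generated graded $A$-modules — finite-dimensional over $\kk$. I would prove this by showing that $H_{\bullet}(\Phi(\FF,\cE))$ vanishes in all but finitely many internal degrees $j$ when $\widetilde{\FF}\otimes\cE$ is exact. The internal degree-$j$ part of the homology is a subquotient (an $E_{2}$-, or $E_{\infty}$-page) of the hypercohomology spectral sequence of $\widetilde{\FF}\otimes\cE(j)$, whose $E_{1}$-page has entries $H^{q}(\widetilde{\FF}_{i}\otimes\cE(j))$ with first differential induced by that of $\FF$. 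For $j\gg 0$, Serre vanishing makes the (finitely many) sheaves $\widetilde{\FF}_{i}\otimes\cE(j)$ acyclic in positive degrees, so this $E_{1}$-page sits in the single row $q=0$, where it is the naive complex of global sections $\Gamma(\widetilde{\FF}_{\bullet}\otimes\cE(j))$; since $\widetilde{\FF}\otimes\cE$ is exact (hence so is $\widetilde{\FF}\otimes\cE(j)$) and its higher cohomology vanishes, $R\Gamma$ of it being $0$ forces this naive complex to be exact, so the page dies at $E_{2}$. The mirror argument (top cohomology and Serre duality) handles $j\ll 0$. Thus $H_{\bullet}(\Phi(\FF,\cE))$ is supported in a window of internal degrees controlled by $\reg(\widetilde{\FF}\otimes\cE)$, hence is finite-dimensional over $\kk$, so $\Phi(\FF,\cE)$ is generically exact. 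This argument deliberately does not give exactness on the nose — for $j$ in the middle range the relevant $E_{2}$-pages are genuinely nonzero — which matches the ``generically'' in the statement and is presumably what lets the Euler characteristic of $\Phi(\FF,\cE)$ recover the \ES\ functionals.

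The hard part will be the structural input common to both halves: identifying, degree by degree, how the free summands and the differential of $\Phi(\FF,\cE)$ break up into contributions of $\FF$ and of $\cE$, so as to (i) verify the minimality asserted in part (1) and (ii) identify the internal degree-$j$ homology of $\Phi(\FF,\cE)$ with a page of the $\cE(j)$-twisted hypercohomology spectral sequence used in part (2). Once those compatibilities are in hand, (1) becomes a uniqueness-of-minimal-complexes statement and (2) becomes the Serre-vanishing argument above.
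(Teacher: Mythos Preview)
For part~(1), your approach shares the paper's key observation---that the portion of the differential inherited from $\FF$ lands in $(t)$---but the paper's execution is cleaner. Rather than constructing a minimal free representative and justifying ``$\cE$-side minimality'' (the vaguest step in your sketch), the paper computes $\Tor^A_\bullet(\Phi(\FF,\cE),A/(t))$ directly on the \v{C}ech double complex, a complex of \emph{flat} (not free) $A$-modules. Reducing modulo $t$ kills the horizontal ($\FF$-induced) differentials and leaves the vertical \v{C}ech differentials, which simply compute sheaf cohomology; this yields the explicit formula $\beta_{i,j}(\FF\cdot\cE)=\sum_{p-q=i}\beta_{p,j}(\FF)\,\gamma_{q,-j}(\cE)$ with no minimality bookkeeping at all.

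For part~(2) there is a genuine error: the degree-$j$ piece of $\Phi(\FF,\cE)$ is \emph{not} the hypercohomology of $\widetilde\FF\otimes\cE(j)$. If it were, then since $\widetilde\FF\otimes\cE(j)$ is exact for every $j$, the complex $\Phi(\FF,\cE)$ would be exact outright, not merely generically---and the paper's Koszul computation $\bK\cdot\cO_{\PP^n}\simeq A/(t^{n+1})$ shows that this fails. (A correct identification: the degree-$j$ piece is the hypercohomology of $\widetilde{\FF^{\leq j}}\otimes\cE$, where $\FF^{\leq j}\subseteq\FF$ is the subcomplex generated in degrees $\leq j$. For $j$ beyond the top generating degree of $\FF$ this is all of $\widetilde\FF\otimes\cE$, so one gets vanishing, and no Serre-vanishing argument is needed.) The paper's proof avoids any degree-by-degree analysis: after inverting $t$, the map $\sigma$ becomes the inclusion $S\subset S[t,t^{-1}]$ composed with the invertible change of variables $x_i\mapsto x_it$, so by flat base change the restriction of $\Phi(\FF,\cE)$ to $\Spec A[t^{-1}]$ is $Rp_{2*}(\widetilde\FF\otimes\cE\otimes\cO_{\Spec A[t^{-1}]})=0$. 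Hence the homology of $\Phi(\FF,\cE)$ is $t$-power torsion.
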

%\noindent 
We will deduce from this theorem that, for any $1\leq k\leq n+1$, there is a pairing
 \begin{equation}%\tag{**}
\label{eqn:**}
\begin{tabular}{c c c c c}
$\left\{\begin{matrix}
\text{Betti tables of} \\ \text{free $S$-complexes}\\
\text{ with homology of }\\ \text{ codimension $\geq k$}
\end{matrix}\right\}$
&
$\times$
&
$\left\{\begin{matrix}
\text{cohomology tables}\\
\text{of sheaves on } \PP^n \text { of }\\
\text{ codim. $\geq n+1-k$}
\end{matrix}\right\}$
&
$\longrightarrow$
&
$\left\{\begin{matrix}
\text{Betti tables of} \\
\text{generically exact}\\
 \text{free $A$-complexes}\\
\end{matrix}\right\},$
\end{tabular}
\end{equation}
where $(\beta(\FF),\gamma(\cE))\mapsto \beta(\Phi(\FF,\cE))$.  Extending this map linearly yields a bilinear pairing among the three positive, rational cones spanned by the tables from \eqref{eqn:**}, as  in Figure~\ref{fig:bracket}. 

We will show that this pairing gives a duality.
\begin{thm}\label{thm:duality}
Fix a point $v\in \VV$.   The following are equivalent:
\begin{enumerate}
	\item   $v$ is a positive, rational multiple of a Betti table $\beta(\FF)$ for some free complex $\FF$ where the homology of $\FF$ has codimension $k$;
	\item  Given any sheaf $\cE$ of codimension $n+1-k$, $v$ pairs with $\gamma(\cE)$ to give an element of the cone of Betti tables of generically exact free $A$-complexes.
\end{enumerate}
\end{thm}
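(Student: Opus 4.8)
The plan is to prove the two implications separately: $(1)\Rightarrow(2)$ reduces to Theorem~\ref{thm:Phi} after putting the relevant supports in general position, while $(2)\Rightarrow(1)$ is a cone-duality argument resting on the structural description, proved in Part~II, of the cone of Betti tables of free complexes with homology of a given codimension. For $(1)\Rightarrow(2)$, write $v=c\cdot\beta(\FF)$ with $c\in\QQ_{>0}$ and $\FF$ a bounded free $S$-complex whose homology has codimension $k$, and let $\cE$ be a bounded complex of coherent sheaves of codimension $n+1-k$. Enlarging $\kk$ if necessary (which alters neither the Betti tables, nor the cohomology tables, nor the cones in play), I may assume $\kk$ infinite. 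The homology sheaves of $\widetilde\FF$ are supported on a closed set $X\subseteq\PP^n$ with $\dim X\le n-k$, and those of $\cE$ on a set $Y\subseteq\PP^n$ with $\dim Y\le k-1$, so $\dim X+\dim Y\le n-1<n$; hence for a general $g\in\mathrm{PGL}_{n+1}(\kk)$ the translate $gY$ is disjoint from $X$. Since $\widetilde\FF$ is a complex of locally free sheaves, disjointness of the supports forces $\widetilde\FF\otimes g^{\ast}\cE$ to be exact, so by Theorem~\ref{thm:Phi}\eqref{thm:Phi:2} the complex $\Phi(\FF,g^{\ast}\cE)$ is generically exact, and its Betti table therefore lies in the cone of Betti tables of generically exact free $A$-complexes. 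Because $g^{\ast}\cO_{\PP^n}(1)\cong\cO_{\PP^n}(1)$ we have $\gamma(g^{\ast}\cE)=\gamma(\cE)$, so Theorem~\ref{thm:Phi}\eqref{thm:Phi:1} identifies $\langle v,\gamma(\cE)\rangle$ with $c\cdot\beta(\Phi(\FF,g^{\ast}\cE))$; since $c>0$ this lies in that cone, which is $(2)$.

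For $(2)\Rightarrow(1)$ I argue by duality of cones. Write $\bB^{S,k}\subseteq\VV$ for the cone spanned by the Betti tables appearing in $(1)$ and $\bB^{A}$ for the cone of Betti tables of generically exact free $A$-complexes. The computation above shows that for every sheaf $\cE$ of codimension $n+1-k$ and every linear functional $\ell$ nonnegative on $\bB^{A}$, the functional $w\mapsto\ell(\langle w,\gamma(\cE)\rangle)$ on $\VV$ is nonnegative on $\bB^{S,k}$, i.e.\ lies in the dual cone $(\bB^{S,k})^{\vee}$. The crux is the reverse inclusion: every facet functional of $\bB^{S,k}$ is, up to a positive scalar and a limit, of this form. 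For this I would invoke the classification of the facet functionals of $\bB^{S,k}$ from Part~II (the Eisenbud--Schreyer-type functionals) and realize each one as $\ell(\langle\,\cdot\,,\gamma(\cE)\rangle)$: take $\cE$ to be a supernatural sheaf of codimension $n+1-k$ (these exist by Eisenbud--Schreyer) and compute $\Phi$ of a pure complex against such a sheaf, obtaining a generically exact $A$-complex concentrated in essentially one homological degree, from which the matching functional $\ell$ on $\bB^{A}$ can be read off. Granting this, hypothesis $(2)$ gives $\langle v,\gamma(\cE)\rangle\in\bB^{A}$ for every admissible $\cE$, so $\ell(\langle v,\gamma(\cE)\rangle)\ge 0$ for every $\ell$ nonnegative on $\bB^{A}$, and hence $\mu(v)\ge 0$ for every $\mu$ in a generating set of $(\bB^{S,k})^{\vee}$; as evaluation at the finitely supported vector $v$ is continuous, $\mu(v)\ge 0$ for all $\mu\in(\bB^{S,k})^{\vee}$, so $v$ lies in the closure of $\bB^{S,k}$. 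Since $v$ is rational, the structural result then yields that $v$ is a positive rational multiple of $\beta(\FF)$ for some free complex $\FF$ with homology of codimension $k$, which is $(1)$.

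I expect the realization step in $(2)\Rightarrow(1)$ to be the main obstacle: it couples the combinatorics of $\bB^{S,k}$ with the existence of supernatural sheaves in every codimension and with a computation of $\beta(\Phi(\text{pure complex},\,\text{supernatural sheaf}))$ precise enough to be matched against a facet functional of $\bB^{A}$. A secondary technical point, needed only to pass from ``$v$ lies in the closure of $\bB^{S,k}$'' to the sharp statement in $(1)$, is that $\bB^{S,k}$ is closed at its rational points; one should also double-check that the field extension used in $(1)\Rightarrow(2)$ genuinely leaves $\bB^{A}$ unchanged.
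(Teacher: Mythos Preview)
Your proposal is correct and follows essentially the same route as the paper: $(1)\Rightarrow(2)$ via general position and Theorem~\ref{thm:Phi} (the paper cites Miller--Speyer rather than your more elementary disjoint-supports argument, but either works here), and $(2)\Rightarrow(1)$ by realizing every boundary functional of $\BBQ^k(S)$ as $\beta_{i,j}(-\cdot\cE)$ or $\chi_{i,j}(-\cdot\cE)$ for a supernatural $\cE$ of codimension $n+1-k$, exactly as is established in the proof of Theorem~\ref{thm:extremal rays refined}. Your closure and field-extension worries are unnecessary: Theorem~\ref{thm:extremal rays refined} gives $\BBQ^k(S)$ the structure of a locally rational simplicial fan, so nonnegativity of all facet functionals on a rational $v$ already forces $v\in\BBQ^k(S)$ with no limiting argument required.
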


The cone of Betti tables of generically exact free $A$-complexes is easy to describe, and it is easy to write down the nonnegative functionals that define it.
By composing the functor $\Phi$ with a nonnegative functional on this cone, we get all the functionals $\langle -,-\rangle_{\tau,\kappa}$ used by Eisenbud and Schreyer. In this sense, $\Phi(\FF,\cE)$ categorifies all of the nonnegative numbers $\langle \FF, \cE\rangle_{\tau,\kappa}$ used in \cite{eis-schrey1}. Allowing complexes with homology is essential, as the functor $\Phi$ does not respect the property of being a resolution: even if $\FF$ is a resolution of a finite length module the complex $\Phi(\FF,\cE)$ may fail to be a resolution. Moreover, the results on resolutions follow easily from these more general results on complexes.

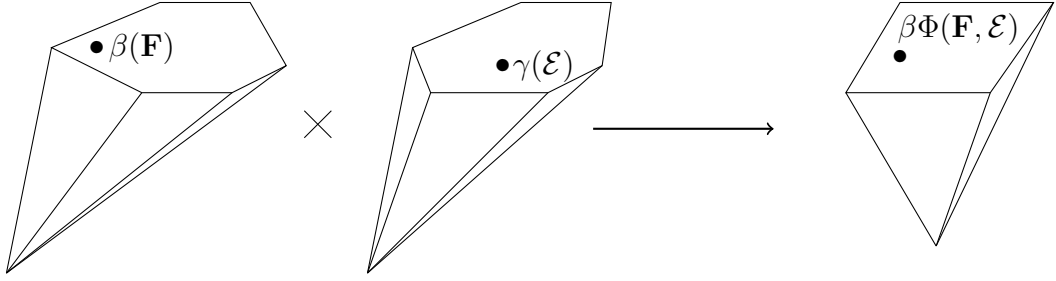
\begin{figure}
\begin{tikzpicture}[scale=1.2]
%% first cone
\draw[-](.5,.5)--(1,3);
\draw[-](.5,.5)--(2,2.5);
\draw[-](.5,.5)--(3,2.5);
\draw[-](.5,.5)--(3.6,2.8);
\draw(1.5,3.0) node {$\bullet$};
\draw(2.0,3.0) node {$\beta(\FF)$};
\draw[-](1,3)--(2,2.5)--(3,2.5)--(3.6,2.8)--(3.2,3.5)--(2.2,3.5)--cycle;
%% the cross
\draw[-] (3.8,2)--(4.1,2.3);
\draw[-] (4.1,2)--(3.8,2.3);
%% second cone
\draw[-](4.5,.5)--(5,3);
\draw[-](4.5,.5)--(5.2,2.5);
\draw(6.0,2.8) node {$\bullet$};
\draw(6.45,2.8) node {$\gamma(\cE)$};
\draw[-](4.5,.5)--(6.5,2.5);
\draw[-](4.5,.5)--(7.1,2.8);
\draw[-](5,3)--(5.2,2.5)--(6.5,2.5)--(7.1,2.8)--(7.2,3.5)--(6.2,3.5)--cycle;
%%the arrow
\draw[->,thick](7,2.1)--(9,2.1);
%% third cone
\draw[-](10.8,.8)--(9.8,2.5);
\draw[-](10.8,.8)--(11.4,2.5);
\draw[-](10.8,.8)--(12.1,3.5);
\draw(10.4,2.9) node {$\bullet$};
\draw(11.05,3.2) node {$\beta\Phi(\FF,\cE)$};
\draw[-](9.8,2.5)--(11.4,2.5)--(12.1,3.5)--(10.4,3.5)--cycle;
\end{tikzpicture}
\caption{The duality between Betti tables and cohomology tables involves three cones.  Namely, given the Betti table $\beta(\FF)$ of a complex of $S$-modules, and the cohomology table $\gamma(\cE)$ of a complex of coherent sheaves on $\PP^n$, we use our pairing to produce the Betti table $\beta(\Phi(\FF,\cE))$ of a complex of $A$-modules. 
}
\label{fig:bracket}
\end{figure}

%\david{I commented out the homological transversality remark below as unnecessary here. Daniel, do you agree?}
%Part of the power of Theorem~\ref{thm:Phi} 
%comes from a homological transversality argument implying that, if
%$\codim(\FF)+\codim(\cE)\geq n+1$, then $\Phi(\FF,\cE)$ always has the Betti table of a complex with finite length homology. (As usual, the codimension of an object in the derived category is defined to be the codimension of the support of its homology.) This follows by combining part (2) of the theorem with a result of Miller and Speyer~\cite[Theorem, p.\ 335]{miller-speyer}. 
%
\addtocontents{toc}{\protect\setcounter{tocdepth}{-1}}
\subsection*{Decomposing the Betti tables of complexes}
\addtocontents{toc}{\protect\setcounter{tocdepth}{1}}

%One of the central results of \BS theory is that the Betti tables of pure resolutions of Cohen--Macaulay modules define the the extremal rays of the cone of Betti tables of minimal free resolutions.  In other words, if $\FF$ is a minimal free resolution, then $\beta(\FF)$ may be written as a positive rational combination of the Betti tables of pure resolutions.    

One of the most important consequences of the duality theory of~\cites{eis-schrey1,boij-sod2} is that the Betti table of any minimal free resolution over $S$ decomposes in a nice way as a positive rational linear combination of pure resolutions: that is, resolutions of Cohen-Macaulay modules with the property that for each $i$ at most one $\beta_{i,j}$ is nonzero. (See~\cites{eis-schrey-icm,floystad-expository} for expository introductions to \BS theory.)

A consequence of the categorified theory is that these same building blocks suffice for the decomposition of the Betti table of much more general complexes.
Theorem~\ref{thm:extremal rays refined} provides the full statement and proof, and implies the previously known results.
% about decompositions of Betti tables
%of resolutions from \cites{eis-schrey1,boij-sod2}.

%\addtocontents{toc}{\protect\setcounter{tocdepth}{-1}}
%\subsection*{Comparing complexes and resolutions}
%\addtocontents{toc}{\protect\setcounter{tocdepth}{1}}
%% \subsecNTOC{Comparing Complexes and Resolutions}
%Here is a consequence of the analysis of Betti tables allowed by this categorification. If every complex were quasi-isomorphic to its homology---which is not the case except when $n=0$---then the Betti table of a complex would be the sum of the Betti tables of the resolutions of its homology.  We prove that similar decompositions happen much more generally, but they require rational coefficients, not only integral ones, and the resolutions are generally not of the original homology modules. 
%%The denominators of the coefficients involved in the decomposition of $\beta(\FF)$ may be seen as a measure of the extent to which $\FF$ is not quasi-isomorphic to its homology. 
%
%Theorem~\ref{thm:extremal rays refined} provides the full statement and proof, and implies the known results about decompositions of Betti tables
%of resolutions from \cites{eis-schrey1,boij-sod2}. 

 Here is the special case of our result when the homology has finite length. By a (homologically) shifted resolution, we mean a complex of
finitely generated graded free modules the form
$$
\FF= [\FF_{k}\leftarrow\cdots \FF_{k+\ell}\leftarrow 0]
$$
that has homology only at $\FF_{k}$.
 
\begin{cor}\label{cor:decompose}
Let $\FF\in \DD^b(S)$ have finite length homology.  Then $\beta(\FF)$ is a positive rational combination of Betti tables of shifted free resolutions of modules of finite length. 
\end{cor}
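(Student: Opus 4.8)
The plan is to rephrase the corollary as an equality of rational cones in $\VV$ and to prove the two inclusions by entirely different means. Let $C$ be the cone generated by the Betti tables of finite complexes of free $S$-modules with finite length homology, and let $\mathcal C\subseteq C$ be the subcone generated by the Betti tables of shifted free resolutions of finite length modules. (An object $\FF\in\DD^b(S)$ is replaced throughout by a minimal free complex representing it, so that $\beta(\FF)$ is well defined; and ``$\FF$ has finite length homology'' is equivalent to ``the homology of $\FF$ has codimension $n+1$'', equivalently to ``$\widetilde\FF$ is exact''.) The inclusion $\mathcal C\subseteq C$ is immediate, since a shifted free resolution of a finite length module is in particular a finite free complex with finite length homology, and $C$ is closed under direct sums and positive rational multiples. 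So the content is the reverse inclusion $C\subseteq\mathcal C$, and for that I would first produce a large supply of linear inequalities satisfied by every point of $C$, and then show those inequalities already force membership in $\mathcal C$.

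First I would extract the inequalities from the categorified pairing. Fix $\FF$ with finite length homology and an arbitrary coherent sheaf $\cE$ on $\PP^n$. Since $\widetilde\FF$ is an exact complex of vector bundles, $\widetilde\FF\otimes\cE$ is exact, so by the second part of Theorem~\ref{thm:Phi} the free $A$-complex $\Phi(\FF,\cE)$ is generically exact; hence $\beta(\Phi(\FF,\cE))$ lies in the cone $D$ of Betti tables of generically exact finite free $A$-complexes, which is elementary and whose finitely many supporting functionals $\lambda$ are explicit. By the first part of Theorem~\ref{thm:Phi}, $\beta(\Phi(\FF,\cE))$ depends only on $\beta(\FF)$ and on $\gamma(\cE)$, so each $\lambda$ and each $\cE$ determine a well-defined linear functional $\ell_{\lambda,\cE}$ on $\VV$ with $\ell_{\lambda,\cE}\ge 0$ on $C$; up to reindexing these are exactly the Eisenbud--Schreyer functionals $\langle-,\cE\rangle_{\tau,\kappa}$, now evaluated on complexes and at every homological position. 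Thus $C\subseteq P:=\{v:\ell_{\lambda,\cE}(v)\ge 0\ \text{for all }\lambda,\cE\}$, and in fact Theorem~\ref{thm:duality} applied with $k=n+1$ gives the reverse inclusion, so $C=P$.

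It then remains to prove $P\subseteq\mathcal C$, and this is the crux. The strategy is the one from classical \BS theory: a point $v\in P$ has finite support, so only finitely many homological windows of length $n+2$ are relevant, and within such a window the effective constraints $\ell_{\lambda,\cE}(v)\ge 0$ coming (by the Eisenbud--Schreyer computation in \cite{eis-schrey1}, via cohomology tables of direct sums of line bundles and of supernatural bundles) are precisely the facet inequalities of the pure-resolution fan placed in that window; the greedy ``staircase'' algorithm of \BS theory then subtracts from $v$ a positive rational multiple of the Betti table of one shifted pure (hence finite length) resolution, strictly shrinking the support of $v$, and induction yields $v\in\mathcal C$. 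Combined with the previous step this gives $C=P=\mathcal C$, so in particular $\beta(\FF)\in\mathcal C$ for every $\FF$ as in the statement, and refining $C_{\mathrm{res}}$ into pure resolutions by the classical theorem upgrades ``shifted free resolution'' to ``shifted pure resolution''. The genuinely hard point is the exhaustiveness claim $P\subseteq\mathcal C$ — that allowing homology introduces no new facet — and I expect the honest way to handle it is not to bootstrap from the resolution case but to run the extremal-ray analysis uniformly for complexes, which is presumably the content of Theorem~\ref{thm:extremal rays refined}; the payoff of the categorification is precisely that the second part of Theorem~\ref{thm:Phi} makes the constraining functionals indifferent to whether $\FF$ is a resolution, so that analysis goes through essentially unchanged.
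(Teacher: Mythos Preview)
Your proposal is essentially correct in spirit and ultimately lands on the same argument as the paper, but it takes an unnecessary and logically circular detour.

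The paper derives Corollary~\ref{cor:decompose} directly as the special case of Theorem~\ref{thm:extremal rays refined} with the constant codimension sequence $\cc=(\dots,n+1,n+1,\dots)$: any degree sequence compatible with this $\cc$ has codimension exactly $n+1$, so the extremal rays are precisely Betti tables of shifted pure resolutions of finite length modules, and the simplicial fan decomposition gives the result. No separate argument is given.

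Your route inserts an extra step: you invoke Theorem~\ref{thm:duality} with $k=n+1$ to conclude $C=P$, and only then turn to showing $P\subseteq\mathcal C$. But in the paper Theorem~\ref{thm:duality} is proved \emph{after} Theorem~\ref{thm:extremal rays refined} and its proof explicitly revisits that of Theorem~\ref{thm:extremal rays refined}; so citing it here is circular. Moreover the step is redundant: once you prove $P\subseteq\mathcal C$ (your ``crux''), the chain $\mathcal C\subseteq C\subseteq P\subseteq\mathcal C$ already closes, and the equality $C=P$ comes for free.

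You yourself correctly diagnose that the genuine content is the exhaustiveness claim $P\subseteq\mathcal C$, and that the honest way to prove it is the uniform extremal-ray/facet analysis of Theorem~\ref{thm:extremal rays refined}. That is exactly the paper's argument: identify the boundary facets of the simplicial fan $\Sigma_{(\delta,\epsilon)}$ spanned by pure tables, realize each facet functional as $\beta(\FF)\mapsto\beta_{i,j}(\FF\cdot\cE_f)$ or $\beta(\FF)\mapsto\chi_{i,j}(\FF\cdot\cE_f)$ for a supernatural $\cE_f$, and use Theorem~\ref{thm:Phi} (via Lemma~\ref{lem:refined positivity} and Corollary~\ref{cor:dualconeA refined}) to verify nonnegativity on $C$. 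Drop the appeal to Theorem~\ref{thm:duality} and your outline collapses to this.

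One small remark: your final sentence about ``upgrading to shifted pure resolutions'' via the classical theorem is not needed. Theorem~\ref{thm:extremal rays refined} already outputs pure resolutions, which are in particular shifted free resolutions of finite length modules; the corollary as stated asks for nothing more.
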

As in the case of resolutions, the decomposition is algorithmic and, in a certain sense, unique.

We illustrate Corollary~\ref{cor:decompose} with an example.  By convention we display the Betti table of $\FF$ as a table
of integers where the element of the $i$-th column and $j$-th row is $\beta_{i,i+j}\FF$, and we replace each zero with $-$. For clarity we often decorate the $(0,0)$ entry
with a superscript $\circ$. In displays of complexes we often suppress the terms that are zero.

\begin{example}
Let $S=\kk[x,y]$ and consider the complex:
\[
\FF := \left[S^1\overset{\left(\begin{smallmatrix}x&y\end{smallmatrix}\right)}{\xlongleftarrow{\hspace*{1.1cm}}} S^2(-1)\overset{\left(\begin{smallmatrix}-y^2&xy\\xy&-x^2\end{smallmatrix}\right)}{\xlongleftarrow{\hspace*{1.1cm}}} S^2(-3)\overset{\left(\begin{smallmatrix}y\\x\end{smallmatrix}\right)}{\xlongleftarrow{\hspace*{1.1cm}}} S^1(-4)\right],
\]
which has has Betti table 
$$
\beta(\FF)=\begin{bmatrix} 1^\circ&2&-&-\\-&-&2&1\end{bmatrix}.
$$
The complex $\FF$ has finite length homology $H_{0}\FF = \kk,\ H_{1}\FF = \kk(-2)$. 

To decompose the Betti table of $\FF$, we consider the
modules $M:=S(-1)/(x^2,xy,y^2)$, and $N:=\Hom(M(1),\kk)$.  The Betti tables of (the minimal free resolutions of) $M[1]$ and $N$ are
\[
\beta(M[1])=\begin{bmatrix}
-^\zp&1&-&-\\
-&-&3&2
\end{bmatrix}
\quad \text{ and } \quad
\beta(N)=\begin{bmatrix}
2&3&-&-\\
-&-&1&-
\end{bmatrix},
\]
and we thus have the decomposition:
\[
\beta(\FF)=
\frac{1}{2}\beta(M[1])
+
\frac{1}{2}\beta(N).
\]

We note that $\beta(\FF)$ can \emph{not} be written as a positive \emph{integral} combination of Betti tables of minimal resolutions of modules of finite length: First, $\beta(\FF)$ is not itself a resolution, since it has length 3 and $S$ has dimension only 2. 
Further,  the sum of the Betti numbers of any resolution of a nonzero $S$-module of finite length is at least 4, while the sum of the Betti numbers of $\FF$ is $6<2\cdot 4$.
\end{example}

%%%%%%%%%%%%%%%%%%%%%%%%%%%%%%
\subsection*{Beyond Polynomial Rings}
%%%%%%%%%%%%%%%%%%%%%%%%%%%%%%
%%%%%%%%%%%%%%%%%%%%%%%%%%%%%%
Our description of the cone of Betti tables of bounded complexes
extends to a wide class of rings in the following way. 
Let $S\subseteq R$ be a finite extension of graded rings, and let
$X=\Proj(R)$.  We let $f\colon X\to \PP^n$ denote the corresponding finite
map of projective schemes of dimension $n$, and we set $L:=f^*\cO(1)$.
%
%Write $R=R(X,L)=\oplus_{e\in \mathbb N} H^0(X,L^{\otimes e})$ for the \defi{section ring}
%of $L$.

%
%Our description of the cone of Betti tables of bounded complexes
%extends to a wide class of rings in the following way. 
%Let $f\colon X\to \PP^{n}$ be a finite morphism from a projective variety of dimension $n$, and set $L:=f^*\cO(1)$. 
%Write $R=R(X,L)=\oplus_{e\in \mathbb N} H^0(X,L^{\otimes e})$ for the \defi{section ring}
%of $L$.

We say that $\cU$ is an \defi{Ulrich sheaf} for $f$ if $f_*(\cU)\cong \cO_{\PP^n}^r$ for some $r>0$.  It was pointed out in \cite[Theorem~5]{eis-schrey-abel} that the existence of an Ulrich sheaf for $f$ implies that  the cone of cohomology tables of vector bundles on $X$ is the same as that on $\PP^{n}$. (The theorem is stated there when $L$ is very ample, but the proof
carries over to this more general situation.) 

The situation for Betti tables of resolutions of finite length modules over $R$ is not at all analogous to the situation over $S$.  In fact, previous results show that the structure is somewhat complicated even in the case of a graded hypersurface ring of low embedding dimension~\cite{bbeg}, or in the case of  a Veronese subring of $\kk[x_1,x_2]$ as in~\cite{kummini-sam}. But we prove a complete analogy for bounded complexes with finite length homology:

\begin{cor}\label{cor:isom cones}
Let $R$ be a graded $S$-algebra such that the map $f\colon \Proj(R)\to \P^{n}$ is finite.  If $\Proj(R)$ admits an Ulrich sheaf for $f$, then the cone of Betti tables
of bounded free complexes with finite length homology over  $R$ is the same
as the cone of Betti tables of bounded free complexes with finite length homology over $S$. 
\end{cor}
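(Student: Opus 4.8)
The plan is to exhibit mutually inverse maps between the two cones, both realized geometrically via the finite map $f$ and the Ulrich sheaf $\cU$. The key point is that, thanks to Theorem~\ref{thm:duality}, a cone of Betti tables is pinned down by the functionals obtained from pairing against cohomology tables of sheaves of appropriate codimension — here, with $k = n+1$ so that the relevant sheaves on $\PP^n$ are $0$-dimensional, i.e.\ of finite length. So it suffices to show that the two cones of Betti tables (over $R$ and over $S$) are cut out by the \emph{same} collection of nonnegative functionals. Concretely, I would first recall that the functionals defining the $S$-side are indexed by finite-length sheaves $\cE$ on $\PP^n$ composed with a nonnegative functional on the (explicit) cone of Betti tables of generically exact free $A$-complexes. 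The task is then to match these with the analogous $R$-side functionals.

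First I would set up the translation between the two sides. Given a bounded free $R$-complex $\GG$ with finite-length homology, restriction of scalars along $S \subseteq R$ turns it into a bounded complex of $S$-modules which I resolve freely to get a bounded free $S$-complex $\FF$; the Betti table $\beta_S(\FF)$ is computed by $\Tor^S$, and finiteness of the extension guarantees the homology stays finite-length. This gives a map $\beta_R(\GG) \mapsto \beta_S(\FF)$, but it is not obviously linear on Betti tables and not obviously an isomorphism of cones — that is the heart of the matter. The cleaner route is the duality characterization: I would show directly that $v \in \VV$ lies in the $R$-cone if and only if it lies in the $S$-cone, by showing the two families of testing functionals coincide.

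The second main step is the geometric input. On the sheaf side, $f$ induces $f^* \colon \DD^b(\PP^n) \to \DD^b(X)$, and the Ulrich condition $f_*\cU \cong \cO_{\PP^n}^r$ is exactly what makes $f^*$ (twisted by $\cU$) identify the cone of cohomology tables on $X$ with that on $\PP^n$, as recalled from \cite{eis-schrey-abel}; crucially a finite-length sheaf on $\PP^n$ pulls back, after tensoring with $\cU$, to a finite-length sheaf on $X$, and conversely every finite-length sheaf on $X$ arises (up to the cone) this way since $f$ is finite so $\dim X = n$ and codimension is preserved. Then I would invoke the categorified pairing $\Phi$ — more precisely its analogue over $R$, whose existence follows from the framework of \S\ref{sec:functor} (which extends $\Phi$ to the class of rings considered here) — and check the compatibility $\Phi_R(\GG, f^*\cE \otimes \cU)$ has the same Betti table over $A$ as $\Phi_S(\FF, \cE^{\oplus r})$, using that $f$ is finite (so $f_*$ is exact and affine, hence commutes with the relevant $\Tor$ and cohomology computations) together with the projection formula $f_*(f^*\cE \otimes \cU) \cong \cE \otimes f_*\cU \cong \cE^{\oplus r}$. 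Property~\eqref{thm:Phi:1} of Theorem~\ref{thm:Phi} then ensures only the cohomology table of $f^*\cE \otimes \cU$ matters, and \eqref{thm:Phi:2} handles the generic exactness needed to land in the $A$-cone. Combining: the $R$-side functionals indexed by $\cE$ agree (up to the positive scalar $r$, which is irrelevant for a cone) with the $S$-side functionals, so the two cones have the same supporting functionals and hence coincide.

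The main obstacle I anticipate is the compatibility of the two pairings $\Phi_R$ and $\Phi_S$ under restriction of scalars / pushforward — i.e.\ verifying that $\beta_A\bigl(\Phi_R(\GG,\cU \otimes f^*\cE)\bigr)$ really equals $\beta_A\bigl(\Phi_S(\FF,\cE^{\oplus r})\bigr)$. This requires being careful that $\Phi_R$ is built from the $R$-module structure in a way that is genuinely compatible with the affine finite morphism $f$, and that the Ulrich hypothesis is used in exactly the right place (it is what collapses $f_*\cU$ to a trivial bundle, turning the $R$-pairing into an $r$-fold copy of the $S$-pairing). A secondary subtlety is making sure finite-length homology is preserved in both directions and that no boundary phenomena of the cones are lost — but since $f$ is finite this is a codimension-preservation statement and should be routine. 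Once the pairing-compatibility is in hand, Theorem~\ref{thm:duality} applied on both sides closes the argument.
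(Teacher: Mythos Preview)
There are two genuine errors in your proposal.

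First, you have the codimension backwards. For $k = n+1$ (finite length homology), Theorem~\ref{thm:duality} tests $v$ against sheaves $\cE$ of codimension $\geq n+1-k = 0$, i.e.\ against \emph{all} sheaves on $\PP^n$---in practice against supernatural vector bundles. Testing only against $0$-dimensional sheaves gives far too few constraints: if $\cE$ has finite length $\ell$, then $\gamma_{q,-j}(\cE)=0$ for $q>0$ and $\gamma_{0,-j}(\cE)=\ell$ for all $j$, so the Betti-number formula yields $\beta(\FF\cdot\cE)=\ell\,\beta(\FF)$, and the duality test degenerates to the $n=0$ conditions of Corollary~\ref{cor:dualconeA refined}. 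These do not cut out $\BBQ^{n+1}(S)$ for $n\geq 1$; for instance they would not rule out the table of Example~\ref{ex:1441}.

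Second, and more seriously, the compatibility you propose is the wrong one. If $\FF$ denotes a free $S$-resolution of $\GG$ viewed via restriction of scalars, then $\beta_S(\FF)\neq\beta_R(\GG)$ in general (each copy of $R(-j)$ restricts to a free $S$-module of rank $[R:S]>1$, with a different graded decomposition). So an identity $\beta\bigl(\Phi_R(\GG,f^*\cE\otimes\cU)\bigr)=\beta\bigl(\Phi_S(\FF,\cE^{\oplus r})\bigr)$ is neither true nor useful: the $S$-side duality tests the vector $v=\beta_R(\GG)$, not $\beta_S(\FF)$. What you actually need is that the abstract bilinear pairing of $v$ with $\gamma(\cE)$ lands in $\BBQ^1(A)$. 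The correct tool is not the projection formula but the Betti-number formula itself (Theorem~\ref{thm:betti numbers of pairing} and its $R$-analogue Theorem~\ref{thm:PhiX}\eqref{thm:PhiX:1}): since $\beta\bigl(\Phi_R(\GG,\cE')\bigr)$ is computed by the \emph{same} universal expression in $\beta_R(\GG)$ and $\gamma(\cE')$, and since your projection-formula calculation (which is correct) gives $\gamma(f^*\cE\otimes\cU)=r\,\gamma(\cE)$, the abstract pairing of $\beta_R(\GG)$ with $r\gamma(\cE)$ equals $\beta\bigl(\Phi_R(\GG,f^*\cE\otimes\cU)\bigr)$, and this lies in $\BBQ^1(A)$ by Theorem~\ref{thm:PhiX}\eqref{thm:PhiX:2} because $\widetilde{\GG}$ is exact. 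No auxiliary $\FF$ enters.

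A further gap: this corrected duality argument only establishes $\BBQ^{n+1}(R)\subseteq\BBQ^{n+1}(S)$, since you are testing $\beta_R(\GG)$ against the known $S$-functionals. You have no $R$-version of Theorem~\ref{thm:duality} available a priori, so ``showing the two families of testing functionals coincide'' is circular for the other inclusion. The paper handles $\BBQ^{n+1}(S)\subseteq\BBQ^{n+1}(R)$ directly: pull back a free $S$-complex along $-\otimes_S R$ (not restriction of scalars, which goes the wrong way and changes the Betti table); this preserves the Betti table on the nose, and finite-length homology is preserved after a general $\GL_{n+1}$-translate by homological transversality (Lemma~\ref{lem:pushpull}).

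Once corrected along these lines your approach is essentially the paper's (see the proof of Theorem~\ref{cor:new graded rings}), which phrases the hard inclusion as rerunning the Decomposition Theorem over $R$ using $\Phi_R$ together with the equality of cohomology cones furnished by the Ulrich sheaf.
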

This provides the first description of a cone of Betti tables over $R$ for many new rings $R$.  These include, for instance, the first cases where $R$ is not generated in degree $1$ (see Examples~\ref{ex:elliptic} and \ref{ex:K3}) and the first cases where $R$ fails to be Cohen-Macaulay (see Example~\ref{ex:curves}).

One direction of the proof is easy, as the pullback of an $S$-complex with finite length homology is an $R$-complex with finite length homology.  For the other direction, we apply the duality statement of Theorem~\ref{thm:duality} to place limits on Betti tables over $R$. 

%A couple of previous results consider the cone of Betti tables over a graded ring $R$ other than the polynomial ring: the case when $R$ is a graded hypersurface rings of low embedding dimension is in~\cite{bbeg}, and the case when $R$ is a Veronese subring of $\kk[x_1,x_2]$ in in~\cite{kummini-sam}.  Corollary~\ref{cor:isom cones} treats many new cases, including the first cases

\begin{example}\label{ex:elliptic}
Let $E$ be a genus one curve and let $L=2P$, where $P$ is any (degree one) point of $E$.  The map $f$ corresponding to the complete
linear series $|L|$ maps $E$ two-to-one to $\P^{1}$. The ring $R(E,L)=\oplus_{e\in \NN} H^0(E,\cO_E(eL))$ has the form
$\kk[x_1,x_2,y]/(g(x_{1},x_{2},y))$  where $\deg(x_i)=1$, $\deg(y)=2$, and where $\deg(g)=4$.
If $P\neq Q\in E$ then the sheaf $\cO_E(3P-Q)$ is an Ulrich sheaf for $f$. 
Corollary~\ref{cor:isom cones} thus implies that the cone of
Betti tables of bounded free complexes with finite length homology over $R(E,L)$ is the same
as the corresponding cone over $\kk[x_1,x_2]$.  
%A direct computation (see Example~\ref{ex:1441}) then implies that there is (up to scalar multiple), a minimal, exact complex of the form
%\[
%\cO_E\longleftarrow \begin{matrix}  \cO_E(-6P)^4\\ \oplus\\ \cO_E(-8P)^4\end{matrix}\longleftarrow \begin{matrix}  \cO_E(-8P)^4\\ \oplus\\ \cO_E(-10P)^4\end{matrix} \longleftarrow \cO_E(-16P)\longleftarrow 0.
%\]
\end{example}

%The bundle $L=f^*\cO_{\PP^n}(1)$ in Corollary~\ref{cor:isom cones} must be ample, since the map $L$ is finite.  However, the corollary does not hold for an arbitrary ample bundle.  For example, let $P$ be a point on an elliptic curve $E$, and let $R=R(E,P)$.  Since $\dim R_1=h^0(E,\cO_E(P))=1$, there cannot exist a pure complex:
%\[
%0\gets R^1\gets R^2(-1)\gets R(-2)\gets 0
%\]
%with finite length homology.

For all of the new graded rings $R$ covered by Corollary~\ref{cor:isom cones}, there exist finitely generated $R$-modules of infinite projective dimension.  It would thus be natural to also consider bounded below elements of the derived category of graded $R$-modules.  

In \S\ref{sec:infinite}, we take a different approach: by realizing an infinite resolution as a limit of bounded complexes, we apply Corollary~\ref{cor:isom cones} prove a decomposition theorem for the Betti table of an infinite resolution of a finite length module.
\begin{example}
Let $R=\kk[x,y,z]/(x^2,xy)$ and let $\FF$ be the minimal free resolution of $R/(x,y,z^2)$.  Then the Betti table of $\FF$ decomposes as an positive, rational, infinite sum of shifted free resolutions of modules of finite length:
\begin{align*}
\beta(\FF)&=\begin{bmatrix}1^\zp&2&3&5&8&\dots \\ -&1&2&3&5&\dots \end{bmatrix}\\
&=\begin{bmatrix}\frac{1}{3}^\zp&-&-&\dots \\ -&1&\frac{2}{3}&\dots \end{bmatrix}
+
\begin{bmatrix}\frac{2}{3}^\zp&1&-&\dots \\ -&-&\frac{1}{3}&\dots \end{bmatrix}
+
\begin{bmatrix}-^\zp &\frac{1}{3}&-&-&\dots \\ -&-&1&\frac{2}{3}&\dots \end{bmatrix}\\
&\ +
\begin{bmatrix}-^\zp&\frac{2}{3}&1&-&\dots \\ -&-&-&\frac{1}{3}&\dots \end{bmatrix}
+
\begin{bmatrix}-^\zp&- &\frac{2}{3}&-&-&\dots \\ -&-&-&2&\frac{4}{3}&\dots \end{bmatrix}
+
\cdots.
\end{align*}
\end{example}

\subsection*{The Multigraded Case}
Our construction of $\Phi$ naturally generalizes to the multigraded case.  This offers a new perspective on the potential for extending \BS theory to toric varieties.  Let $X$ be a projective toric variety and let $R$ be the Cox ring of $X$ with the natural $\Pic(X)$ grading and its natural irrelevant ideal. We say that a complex $\FF$ over $R$ has \defi{irrelevant homology} if its homology is supported on the irrelevant ideal.

In place of the ring $A=\kk[t]$, we take the semigroup ring $C=\kk[\NE(X)]$, where $\NE(X)\subseteq \Pic(X)$ is the subsemigroup of effective divisors.  %Given $\alpha \in \Pic(X)$ we write $t^\alpha$ for the corresponding element of $C$.  
The ring $C$ is graded by the group $\Pic(X)$.

Let $\DD^b(R)$ and $\DD^b(C)$ denote the bounded derived categories of finitely generated, multigraded $R$-modules and $C$-modules, respectively.   
If $\FF\in \DD^b(R)$ then $\widetilde{\FF}$  denotes the corresponding complex of coherent sheaves in $\DD^b(X)$.

Generalizing the construction above, we construct a functor
\[
\Phi_{X}: \DD^b(R)\times \DD^b(X)\to \DD^b(C)
\]
with the following properties:
\begin{theorem}\label{thm:Phimulti}
If $\FF$ is a bounded complex of free multigraded $R$-modules and $\cE$ is a bounded complex of coherent sheaves on $X$, then:
\begin{enumerate} 
	\item\label{thm:Phi':1}  The multigraded Betti table of $\Phi_{X}(\FF,\cE)$ depends only on the multigraded Betti table of $\FF$ and the multigraded cohomology table of $\cE$.
	\item\label{thm:Phi':2}  If $\widetilde{\FF}\otimes \cE$ is exact, then $\Phi_{X}(\FF,\cE)$ is generically exact.  
\end{enumerate}
\end{theorem}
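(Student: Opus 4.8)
To prove Theorem~\ref{thm:Phimulti}, the plan is to obtain $\Phi_X$, together with the two asserted properties, as the direct transcription of the construction of $\Phi$ and of the proof of Theorem~\ref{thm:Phi} from Part~I, under the dictionary: the $\ZZ$-grading is replaced by the $\Pic(X)$-grading, the polynomial ring $S$ by the Cox ring $R$, projective space $\PP^n$ by $X$, and $A=\kk[t]$ by $C=\kk[\NE(X)]$; twisting an unbounded number of times by $\cO(1)$ is replaced by twisting by divisor classes ranging cofinally through $\NE(X)$, noting that $\NE(\PP^n)=\NN\cdot[\text{hyperplane}]$. Before transcribing, one records the handful of facts that make the target well defined. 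Because $X$ is a projective toric variety, the effective cone of divisors is rational polyhedral and pointed — pointed because if both $\cO(D)$ and $\cO(-D)$ have sections then their product is a nonzero constant, so each section is nowhere vanishing and $D=0$ — and hence $\NE(X)$ is a finitely generated pointed semigroup (being the image in $\Pic(X)$ of the free semigroup on the torus-invariant prime divisors, or by Gordan's lemma). Thus $C$ is a Noetherian $\Pic(X)$-graded $\kk$-algebra with $C_0=\kk$ and maximal graded ideal $\m_C=\bigoplus_{0\neq D\in\NE(X)}C_D$, and $C$ is a domain (it embeds in the group ring of $\NE(X)^{\mathrm{gp}}$). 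Consequently the multigraded Betti table over $R$ and over $C$, the multigraded cohomology table of $\cE$, and the notion ``generically exact over $C$'' (exact after $-\otimes_C\Frac(C)$) all make sense, and since $X$ is projective every hypercohomology group occurring in the construction is a finite-dimensional $\kk$-vector space.

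\emph{Part \eqref{thm:Phi':1}.} Since $\Phi_X$ is a functor on $\DD^b(R)\times\DD^b(X)$ and the multigraded Betti table of $\FF$ is an invariant of its class in $\DD^b(R)$, one replaces $\FF$ by its minimal model and assumes $\FF$ is a minimal complex of free $R$-modules, so that $\FF_p$ has a summand $R(-a)$ exactly $\beta_{p,a}(\FF)$ times. Now run the Part~I argument. First, for a single shifted free module $R(-a)[p]$ one reads off from the (multigraded \v Cech/Beilinson-type) model used to build $\Phi_X$ that $\Phi_X(R(-a)[p],\cE)$ is represented by a minimal complex of free $C$-modules whose term in homological degree $q$ and $\Pic$-degree $e$ has rank a fixed universal expression in the numbers $\gamma_{i,j}(\cE)$ (and in $a,p,q,e$); this is the combinatorial core, and is exactly where the explicit recipe for $\Phi_X$ enters. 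Then, for a general minimal $\FF$, one assembles these pieces along the differential of $\FF$ and checks that the resulting total complex of free $C$-modules is again minimal: the entries of its differential coming from the differential of $\FF$ lie in $\m_R$ and map into $\m_C$, while the remaining (``cohomological'') entries are homogeneous of strictly positive $\Pic(X)$-degree, hence also lie in $\m_C$ because $C$ is pointed. Therefore $\beta_{q,e}(\Phi_X(\FF,\cE))=\sum_{p,a}\beta_{p,a}(\FF)\cdot(\text{the universal contribution of }R(-a)[p])$, a function of $\beta(\FF)$ and $\gamma(\cE)$ only.

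\emph{Part \eqref{thm:Phi':2}.} Suppose $\widetilde{\FF}\otimes\cE$ is exact on $X$; then each twist $(\widetilde{\FF}\otimes\cE)(D)$ is exact and all of its hypercohomology vanishes. Applying $-\otimes_C\Frac(C)$ to $\Phi_X(\FF,\cE)$ inverts every nonzero monomial of $C$, hence every class of $\NE(X)$ — legitimate since $\NE(X)$ spans $\Pic(X)_\QQ$, the ample classes lying in its interior — and on the construction side this passes, in each $\Pic$-degree, to the colimit along the twisting maps as the twist moves cofinally out through $\NE(X)$; by Serre vanishing (twisting by large ample classes) the higher-cohomology contributions are killed in this colimit, and what remains is identified by the construction with an explicitly acyclic complex built from the now-vanishing global sections, exactly as in Part~I. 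Hence $\Phi_X(\FF,\cE)\otimes_C\Frac(C)$ is acyclic, i.e.\ $\Phi_X(\FF,\cE)$ is generically exact; this is the proof of Theorem~\ref{thm:Phi}\eqref{thm:Phi:2} verbatim, with ``$j\to+\infty$'' replaced by ``the twist moving cofinally out through $\NE(X)$''.

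The main obstacle is in Part \eqref{thm:Phi':1}, and it is genuinely toric. Over $\PP^n$ the finiteness of the output rests on Serre vanishing together with the coincidence $\bigoplus_{j\geq0}H^0(\PP^n,\cO(j))=S$; over a general toric $X$ one must instead invoke multigraded Castelnuovo--Mumford regularity (Maclagan--Smith) to control $\HH^i(X,\cE(D))$ uniformly as $D$ moves out through the nef cone, and one must verify that the output is finitely generated over $C$ even though $C$ is \emph{not} the full section ring $\bigoplus_{D\in\NE(X)}H^0(X,\cO(D))$. The point that resolves this is that $\Phi_X$ uses only the $C$-action induced by the differential of $\FF$ (equivalently, by a fixed finite collection of sections) rather than the full multigraded multiplication — which is precisely why the target is $C$ and not $R$ — so that $\Phi_X(\FF,\cE)$ is a finitely generated $C$-module with finitely supported Betti table, and only finitely many entries of $\gamma(\cE)$ contribute to each $\beta_{q,e}$. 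Granting this finiteness, the ``minimality propagates'' lemma used in Part \eqref{thm:Phi':1} and the acyclicity identification used in Part \eqref{thm:Phi':2} are routine transcriptions of their Part~I counterparts, using only that $C$ is a pointed, positively graded Noetherian domain.
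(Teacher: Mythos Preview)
For Part~\eqref{thm:Phi':1} you are on the right track, and the paper indeed just says that the proof of Theorem~\ref{thm:betti numbers of pairing} carries over. Two remarks, though. First, your concern about finiteness and multigraded Castelnuovo--Mumford regularity is a red herring: since $X$ is projective, $p_2\colon X\times\AA^m\to\AA^m$ is proper, so $Rp_{2*}$ sends $\DD^b(\mathrm{coh})$ to $\DD^b(\mathrm{coh})$, and $\Phi_X(\FF,\cE)\in\DD^b(C)$ automatically. Second, the paper's argument does not attempt to build a minimal free model of $\Phi_X(\FF,\cE)$; it computes $\Tor^C(\Phi_X(\FF,\cE),\kk)$ directly by tensoring the flat Cech double complex with $C/\m_C$, noting that the horizontal maps (which carry monomial factors $t^\alpha$ with $0\neq\alpha\in\NE(X)$) vanish while the vertical Cech maps survive, and reading off the vertical cohomology. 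Your ``assemble pieces and check minimality'' route can be made to work, but it requires controlling the higher spectral-sequence differentials and extensions, which you do not address; the paper's approach sidesteps this entirely.

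Part~\eqref{thm:Phi':2} contains a genuine gap: you have misremembered the Part~I argument. You describe ``the colimit along the twisting maps as the twist moves cofinally out through $\NE(X)$'' and invoke Serre vanishing, asserting this is ``the proof of Theorem~\ref{thm:Phi}\eqref{thm:Phi:2} verbatim, with $j\to+\infty$ replaced by\dots''. But Proposition~\ref{prop:exact}, which \emph{is} the proof of Theorem~\ref{thm:Phi}\eqref{thm:Phi:2}, contains no $j\to+\infty$, no colimit, and no Serre vanishing; your description does not match any argument in the paper, and as written it is not a proof (after tensoring with $\Frac(C)$ there is no $\Pic$-grading left in which to take a ``colimit in each degree''). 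The actual argument, which the paper transcribes verbatim to the toric setting, is purely algebraic and much simpler: after base change to $\Spec\Frac(C)$, every monomial $t^\alpha$ becomes a unit, so the ring map $\sigma\colon R\to R\otimes C$, $f\mapsto f\,t^{\deg f}$, becomes the flat inclusion $R\hookrightarrow R\otimes\Frac(C)$ composed with the \emph{invertible} rescaling $f\mapsto f\,t^{\deg f}$. Hence over the generic point of $\AA^m$ one has $\tau_X(\FF)\otimes(\cE\boxtimes\cO)\cong(\widetilde\FF\otimes\cE)\boxtimes\cO_{\Spec\Frac(C)}$, which is exact by hypothesis; a hypercohomology spectral sequence shows $Rp_{2*}$ of an exact complex is exact, and flat base change identifies the result with $\Phi_X(\FF,\cE)\otimes_C\Frac(C)$.
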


For example, if $\FF$ has irrelevant homology, then we obtain a pairing of the form:
\begin{equation*}%\tag{**}
\label{eqn:multipairing}
\left\{\begin{matrix}
%\text{Cone generated by}\\
\text{Multigraded Betti tables} \\ \text{of free $R$-complexes}\\
\text{with irrelevant homology}\end{matrix}\right\}
\times 
\left\{\begin{matrix}
%\text{Cone generated by}\\
\text{cohomology }\\
\text{tables of vector}\\
\text{ bundles on } X
\end{matrix}\right\}
\longrightarrow
\left\{\begin{matrix}
%\text{Cone generated by}\\
\text{Multigraded Betti tables} \\ \text{of generically exact }
\\ \text{ free $C$-complexes}
\end{matrix}\right\}
\end{equation*}
This pairing enables the construction of toric/multigraded analogues of the Eisenbud--Schreyer functionals.  It also suggests that, at least from the duality viewpoint, the cone of free $R$-complexes with irrelevant homology may be a more natural object to study than the cone of free resolutions over $R$.

%%%%%%%%%%%%%%%%%%%%%%%%
%%%%%%%%%%%%%%%%%%%%%%%%
\addtocontents{toc}{\protect\setcounter{tocdepth}{-1}}
\section*{Acknowledgments}
\addtocontents{toc}{\protect\setcounter{tocdepth}{1}}
%%%%%%%%%%%%%%%%%%%%%%%%
%%%%%%%%%%%%%%%%%%%%%%%%
We thank Frank-Olaf Schreyer for many insights and inspirations.
We also thank Christine Berkesch, Bhargav Bhatt, A.J.~de Jong, Rob Lazarsfeld, Ezra Miller, W.~Frank Moore, Steven Sam, Gregory G.~ Smith, and Ravi Vakil for conversations and suggestions that improved this paper.  We thank Stepan Paul for suggested corrections in Section~\ref{sec:toric}.
Some of this work was completed during the first author's visit to the University of Michigan, and we are grateful for their hospitality.  The computer algebra system \texttt{Macaulay2} \cite{M2} provided valuable assistance throughout our work.  Finally, we thank a referee of an earlier draft for detailed suggestions that significantly improved the paper.

%%%%%%%%%%%%%%%%%%%%%%%%
%%%%%%%%%%%%%%%%%%%%%%%%
\section*{\underline{{Part I: Categorifying the Duality in \BS Theory}}}
%%%%%%%%%%%%%%%%%%%%%%%%
%%%%%%%%%%%%%%%%%%%%%%%%
\section{Notation}\label{sec:notation}
%%%%%%%%%%%%%%%%%%%%%%%%
%%%%%%%%%%%%%%%%%%%%%%%%
We gather some notation and definitions that we will use throughout.  We denote by  $\mathfrak m=(x_0, \dots, x_n)$ the homogeneous maximal ideal on $S$.
\begin{defn}
If $\FF\in \DD^b(S)$ is a free complex, then we say that $\FF$ is \defi{minimal} if each differential $\partial: \FF_i\to \FF_{i-1}$ satisfies $\partial(\FF_i)\subseteq \mathfrak m\FF_{i-1}$.
\end{defn}
We may represent any $\FF\in \DD^b(S)$ by a minimal, free complex.  Under this assumption, we may write $\FF_i$ as the direct sum $\FF_i=\oplus_{j\in \ZZ} S(-j)^{\beta_{i,j}\FF}$.  If $\FF$ is quasi-isomorphic to a complex with only one nonzero term, then we say that $\FF$ is a \defi{shifted resolution}.  We denote the $r$th homology module of $\FF$ by $\HH_r\FF$.

\begin{defn}\label{def:deg seq}
A \defi{ degree sequence of codimension $\ell$} is a strictly increasing indexed sequence of $\ell+1$ 
integers, padded by  infinite strings of $-\infty$ on the left and of $\infty$ on the right. More formally,
it is a sequence of the form
\[{\dd}=(\dots, d_i, d_{i+1}, \dots)
\]
with  $d_{i} \in \{-\infty\}\cup \ZZ\cup \{\infty\}$ and $d_i \leq d_{i+1}-1$ and 
where  $\ell+1$ entries of $\dd$ lie in $\ZZ$. 
We partially order the degree sequences  termwise: $d\leq d'$ if and only if $d_i\leq d_i'$ for all $i$.
\end{defn}
(This usage is slightly more general than that of \cite{eis-schrey1} or \cite{boij-sod2}, where degree
sequences were taken to be what would be written here as
$(\dots,-\infty,d_{0}^{\zp},\dots,d_{\ell}, \infty,\dots)$.)

As with Betti tables, we use a $\zp$ to indicate homological position zero when writing a degree sequence. 
Thus for example
$$
(\dots, -\infty , 0, 1^{\circ}, 3, \infty, \dots) < (\dots, -\infty , 0, 1, 3^{\circ}, \infty, \dots) 
$$
are  degree sequences of codimension 2.

Given any degree sequence $\dd$, we say that a complex $\FF$ is \defi{pure of type $\dd$} if: for all $i$ such that $d_i\in \ZZ$, the free module $\FF_i$ is generated entirely in degree $d_i$; and if $\FF_i=0$ when $d_i=\pm \infty$.   The existence of pure resolutions (see~\cite{efw} or \cite[\S5]{eis-schrey1}) shows that, for any  degree sequence $\dd$ of codimension $\ell\leq n+1$, there exists a shifted resolution of a Cohen-Macaulay module of codimension $\ell$ over $S$ that is a pure complex of type $\dd$.  

If $P$ is some property of graded $S$-modules, we say that a complex $\FF \in \DD^b(S)$ has property $P$ if the direct sum of the homology modules of $\FF$ has property $P$. We extend the definition of  properties of coherent sheaves to $\DD^b(\PP^n)$ similarly.  

A \defi{root sequence $f$ of dimension $s$} is a strictly decreasing sequence
of $s$ integers, $f=(f_1>\cdots>f_s)$.  
A sheaf $\cE$ on $\PP^{n}$ is
\defi{supernatural of type} $f=(f_1, \dots, f_{s})$ if the following are satisfied: 
\begin{enumerate}
\item The dimension of $\cE$ is $s$.
\item For all $j\in \mathbb Z$, there exists at most one $i$ 
		such that $\dim_\Bbbk H^i(\PP^{n}, \cE(j))\ne 0$.
\item The Hilbert polynomial of $\cE$ has roots $f_1, \dots, f_{s}$.
\end{enumerate}
For every root sequence $f$ of dimension $s\leq n$, there exists a supernatural sheaf of type
$f$~\cite[Theorem~0.4]{eis-schrey1}.
Moreover, the cohomology table of any coherent sheaf 
can be written as a positive real combination of cohomology tables 
of supernatural sheaves~\cite[Theorem~0.1]{eis-schrey2}.  

%One of the main results of \cite{eis-schrey1} indicates a duality between the cone of Betti tables of free resolutions of finite length graded $S$-modules and the cone of cohomology tables of vector bundles on $\PP^n$.  We provide a new perspective on this duality in \S\ref{sec:duality}. 

We index complexes in $\DD^b(S)$ homologically as in $\FF=[\dots \gets \FF_0\gets \FF_1\gets \dots]$.  For any $k\in \ZZ$, we define a \defi{shift} of $\FF$, denoted $\FF[k]$, as the complex obtained by shifting the indices in the following way:  $(\FF[k])_i=\FF_{i-k}$.

Notation for various cones of Betti tables is introduced in \S\ref{sec:refined}.

%%%%%%%%%%%%%%%%%%%%%%%%
%%%%%%%%%%%%%%%%%%%%%%%%
\section{The categorical duality}\label{sec:duality pairing}
%%%%%%%%%%%%%%%%%%%%%%%%
%%%%%%%%%%%%%%%%%%%%%%%%
In this section we define the functor $\Phi$ and derive strengthened versions of its properties given in Theorem~\ref{thm:Phi}. %As before, set $A= \kk[t]$.
 Let 
$\sigma\colon S\to S\otimes A = S[t]$
be the homomorphism defined by $\sigma(x_{i})=x_{i}t$. 
We write $-\otimes_\sigma S[t]$ to denote tensoring over $S$ with $S[t]$ using the structure
given by $\sigma$. Note that $\sigma$ is not a flat map---it is not even equidimensional.

If $F$ is a graded  $S$-module, then 
$$
F\otimes_{\sigma} S[t]
$$
is a bigraded $S[t]$ module that defines a graded sheaf 
$$
\tau(F) := \widetilde{F\otimes_{\sigma} S[t]}
$$
on $\PP^{n}_{A} = \PP^{n}\times \AA^{1}$, where the grading comes from the grading of $A = \kk[t]$.
This functor extends to a functor $\tau$ on derived
categories
taking a graded complex of free $S$-modules $\FF$ to
$$
\tau(\FF): =\widetilde \FF \otimes_{\sigma}\cO_{\PP^{n}\times \AA^{1}},
$$
a complex of graded sheaves on $\PP^{n}\times \AA^{1}$.  The main practical effect of this
is that in a complex of graded, free $S$-modules, the forms $f_{ij}$ involved in a matrix
representing a differential are replace by $t^{\deg(f_{ij})}f_{ij}$.
This description of $\tau$ could 
be extended to graded complexes of arbitrary finitely generated graded modules
at the expense of replacing the tensor product with a derived tensor product, but we
will never need this.

%, with the grading coming from degree in $t$, the coordinate on $\AA^{1}$. 
We consider an example.  If
$$
\FF= [\xymatrix{0&S\ar[l]&S(-e)\ar[l]_f& 0\ar[l]}]
$$
where $f$ is a form of degree $e$, then
$$
\tau(\FF)= [\xymatrix{0&\cO_{\PP^{n}}\boxtimes A\ar[l]&\cO_{\PP^{n}}(-e)\boxtimes A(-e)\ar[l]_-{t^ef}& 0\ar[l]}]
$$
where $P\boxtimes Q$ denotes the tensor product of the pullbacks of $P$ and $Q$ from
$\PP^{n}$ and $\AA^{1}$, respectively. 

%\begin{defn} The functor $\Phi: D^{b}(\P^{n}) \times D^{b}(S) \to D^{b}(A)$ is given by:
%$$
%\Phi(\cE,\FF) = R\pi_{*} \left(\tau_{0}(\cE)\otimes_{\P^{n}\times\AA^{1}} \tau_{1}(\FF)\right)
%$$
%where $\pi: \PP^{n}\times \AA^{1}\to \AA^{1}$ is the projection.
%\end{defn}

\begin{defn} \label{defn:product} The functor $\Phi: \DD^{b}(S)\times \DD^b(\PP^n) \to \DD^{b}(A)$ is the composition of $\tau$ with the projection $Rp_{2*}$ to the derived category of graded $A$-modules; that is,
$$
\Phi(\FF,\cE) = Rp_{2*} \bigl(\tau(\FF)\otimes_{\P^{n}\times\AA^{1}} (\cE\boxtimes \cO_{\AA^{1}}) \bigr)
$$
where $\FF$ denotes a graded  complex of finitely generated free $S$-modules. We  often write
$\FF\cdot \cE$ for $\Phi(\FF,\cE)$.
\end{defn}
We will only need to use the definition of $\Phi$ in the special case where $\FF$ is a free complex and where $\cE$ is (the extension by zero of) a vector bundle on a linear subspace of $\PP^n$.

For those comfortable with stacks, Definition~\ref{defn:product}
could be rephrased as follows. Consider the commutative diagram:
\[
\xymatrix{
\PP^n&\PP^{n}\times [\AA^1/\GG_m]\ar[l]_-{\pi_{1}} \ar[r]^-{\Sigma} \ar[d]^{\pi_2}&[\AA^{n+1}/\GG_m]\\
&[\AA^1/\GG_m]&%[\Spec(\kk)/\GG_m]\ar[l]_{o}
}
\]
where $\Sigma$ is the morphism induced by $\sigma$ and the maps $\pi_1$ and $\pi_2$ are the projections.  We could define $\Phi(\FF,\cE)$ to be $R\pi_{2*}\left( \Sigma^*\FF\otimes \pi_{1}^{*}\cE\right)\in \DD^b([\AA^1/\GG_m])$.

To see why this is an equivalent definition, note first  that there is an equivalence of categories (given by pullback/descent) between coherent sheaves on $[\AA^1/\GG_m]$ and graded, finitely generated $A$-modules. Further, since the covering map $\AA^1\to [\AA^1/\GG_m]$ is flat, cohomology commutes with base change (see \cite[0765]{stacks-project}) for the diagram
\[
\xymatrix{
\PP^n\times \AA^1\ar[r]\ar[d]^-{p_2}&\PP^{n}\times [\AA^1/\GG_m]\ar[d]^{\pi_2}\\
\AA^1\ar[r]&[\AA^1/\GG_m].
}
\]
Thus, the pullback of $R\pi_{2*}\left( \Sigma^*\FF\otimes \pi_{1}^{*}\cE\right)$ is quasi-isomorphic 
to $\Phi(\FF,\cE)$.

We remark that the functor $\Phi$ specializes to certain Fourier--Mukai transforms. If we fix a complex $\FF$, then using the notation of \cite[Definition~3.3]{caldararu}, the functor $\Phi(\FF, -)$ is the Fourier--Mukai $\Phi^{\tau(\FF)}\colon \DD^b(\PP^n)\to \DD^b(\AA^1)$.

Here is a sample computation of $\Phi$: 

\begin{example} Let 
$$
\bK = \bigl[ S\gets S^{n+1}(-1) \gets \wedge^{2}(S^{n+1})(-2) \gets\cdots\gets S(-n-1)\bigr]
$$
be the Koszul complex, the minimal free resolution of $\kk$, and take
$\cE = \cO_{\PP^{n}}$, so that 
$$
\bK \cdot \cE = Rp_{2*}\tau(\bK).$$  
There is a spectral sequence
\[
E_2^{i,-j}=H^i(\PP^n, \widetilde{\bK_j})\otimes A(-j))\Rightarrow R^{i-j}p_{2*}(\tau(\bK)).
\]
But 
%since $E_1^{i,-j}\ne 0$ only
%$$
%\widetilde {\sigma^{*} S(-a)} \cong \cO_{\PP^{n}}(-a)\otimes A(-a),
%$$
the terms on this page all vanish except for
$$
E^{0,0}_2= H^{0}(\cO_{\PP^{n}})\otimes A = A,
$$
in homological degree 0, and 
$$
E^{n,-n-1}=H^{n}(\cO_{\PP^{n}}(-n-1)) \otimes  A(-n-1) = A(-n-1)
$$
in cohomological degree $n+(-n-1) = -1$.
Thus the complex $\bK \cdot \cE$ has the form
$$
\xymatrix{
A&A(-n-1)\ar[l]_-{ut^{n+1}}
}
$$
for some $u\in \kk$. But the complex $\FF$ has homology of finite length, so the homology of $\tau(\FF) \otimes p_{2}^{*}\cE$ is annihilated by a power of $t$, and thus $\FF\cdot \cE$ will also have homology annihilated by a power of $t$ (see Proposition~\ref{prop:exact} for more details).  It follows that $u\neq 0$, and $\FF\cdot \cE$ is quasi-isomorphic to the graded $A$-module $A/(t^{n})$, regarded as a complex concentrated in homological degree 0.
\end{example}

%%%%%%%%%%%%%%%%%%%%%%%%%
%%%%%%%%%%%%%%%%%%%%%%%%%
%\section{The Betti table of $\Phi(\FF,\cE)$}\label{sec:Betti of Phi}
%%%%%%%%%%%%%%%%%%%%%%%%%
%%%%%%%%%%%%%%%%%%%%%%%%%
 Here is a more precise version of Theorem~\ref{thm:Phi}\eqref{thm:Phi:1}.
\begin{theorem}\label{thm:betti numbers of pairing}
The Betti numbers of $\FF\cdot \cE = \Phi(\FF,\cE)$ are given by the formula:
\[
\beta_{i,j}(\FF\cdot \cE)=\sum_{p-q=i}  \beta_{p,j}(\FF)\gamma_{q,-j}(\cE).
\]
In particular, the Betti table of $\FF\cdot \cE$ only depends on $\beta(\FF)$ and $\gamma(\cE)$.
\end{theorem}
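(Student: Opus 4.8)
The plan is to compute the hypercohomology of $\tau(\FF) \otimes p_2^*\cE$ directly, using the explicit free-module description of $\FF$ together with the Leray spectral sequence for the projection $p_2 \colon \PP^n \times \AA^1 \to \AA^1$, and then extract the Betti numbers from the resulting graded $A$-module. First I would reduce to the case where $\FF$ is a minimal free complex, so that $\FF_p = \oplus_j S(-j)^{\beta_{p,j}(\FF)}$; since $\Phi$ is a functor on the derived category, replacing $\FF$ by a quasi-isomorphic minimal free complex does not change $\Phi(\FF,\cE)$ up to quasi-isomorphism, and minimality guarantees that the Betti numbers of the output can be read off from a complex of free $A$-modules with no unit entries in the differentials. (One should check that $\tau$ of a minimal $S$-complex, after $Rp_{2*}$, still admits a minimal representative; this follows because $\sigma(x_i) = x_i t$ sends $\m$ into $\m_{S[t]}$, so the differentials of $\tau(\FF)$ have entries in $\m_{S[t]}$, hence after pushforward the induced maps on cohomology of twists of $\cO$ pick up positive powers of $t$ and land in $\m_A = (t)$.)

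Next I would compute $\tau(\FF_p)$ termwise. Since $S(-j) \otimes_\sigma S[t]$ is the bigraded module whose sheafification is $\cO_{\PP^n}(-j) \boxtimes A(-j)$ (the grading shift in the $A$-factor records that a degree-$j$ generator of $S(-j)$ is sent to something of $t$-degree $j$), we get $\tau(\FF_p) = \oplus_j \bigl(\cO_{\PP^n}(-j) \boxtimes A(-j)\bigr)^{\beta_{p,j}(\FF)}$. Tensoring with $p_1^*\cE = \cE \boxtimes \cO_{\AA^1}$ and pushing down along $p_2$, flat base change (which the excerpt already invokes, via \cite[0765]{stacks-project}) gives $Rp_{2*}\bigl(\tau(\FF_p) \otimes p_1^*\cE\bigr) = \oplus_j R\Gamma(\PP^n, \cE(-j)) \otimes_\kk A(-j)^{\beta_{p,j}(\FF)}$, whose $q$-th cohomology is $H^q(\PP^n, \cE(-j)) \otimes A(-j)^{\beta_{p,j}(\FF)}$, a free $A$-module of rank $\gamma_{q,-j}(\cE)\,\beta_{p,j}(\FF)$ generated in $A$-degree $j$.

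Now I would run the spectral sequence $E_1^{p,q} = R^q p_{2*}\bigl(\tau(\FF_p) \otimes p_1^*\cE\bigr) \Rightarrow \HH^{q-p}\bigl(\Phi(\FF,\cE)\bigr)$ (the hypercohomology spectral sequence of the double complex, indexed so that $\Phi$ sits in homological degree $p - q$). The key point is that every term on the $E_1$-page is a \emph{free} $A$-module; moreover each summand of $E_1^{p,q}$ is generated in internal degree $j$ where it contributes $\beta_{p,j}(\FF)\gamma_{q,-j}(\cE)$, matching the claimed formula with $i = p - q$. The subtlety — and the step I expect to be the main obstacle — is passing from the $E_1$-page to actual Betti numbers: a priori, later differentials in the spectral sequence, or the non-minimality of the resulting complex of free $A$-modules, could cause cancellation, so that $\beta_{i,j}(\Phi(\FF,\cE))$ is only bounded above by $\sum_{p-q=i}\beta_{p,j}(\FF)\gamma_{q,-j}(\cE)$. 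To get equality I would argue that, since $\FF$ is minimal, the entire double complex $\tau(\FF)\otimes p_1^*\cE$ can be represented (e.g. via a Čech resolution in the $\PP^n$-direction) by a complex of free $A$-modules all of whose differential entries lie in $(t)$: the $S$-differentials of $\FF$ contribute entries in $\m_{S[t]} \subseteq (t)\cdot S[t] + \m\cdot S[t]$ and, after taking global sections of twisted structure sheaves on $\PP^n$, the $\m$-part dies while the surviving maps carry explicit powers of $t$; the Čech differentials on $\PP^n$ become $A$-linear (degree-$0$ in $t$) but connect cohomology groups in different homological degrees $p-q$, and one checks these too cannot be units because they raise or preserve the total degree in a way incompatible with being a unit in $A$. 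Hence the total complex is already minimal as a complex of free $A$-modules, its terms are exactly the direct sums above, and $\beta_{i,j}(\Phi(\FF,\cE)) = \dim_\kk \Tor_i^A(\Phi(\FF,\cE),\kk)_j$ equals the rank (in internal degree $j$) of the homological-degree-$i$ term, namely $\sum_{p-q=i}\beta_{p,j}(\FF)\gamma_{q,-j}(\cE)$. The final sentence of the theorem is then immediate, since the right-hand side of the displayed formula manifestly depends only on $\beta(\FF)$ and $\gamma(\cE)$.
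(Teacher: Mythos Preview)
Your setup is the same as the paper's: form the \v Cech double complex $C_{\bullet,\bullet}$ resolving $\tau(\FF)\otimes(\cE\boxtimes\cO_{\AA^1})$, and observe that minimality of $\FF$ forces every horizontal differential (the one induced by $\partial^{\FF}$) to have entries divisible by $t$. You also correctly flag the real issue, namely that the $E_1$-page only gives an upper bound on Betti numbers a priori.

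Where your argument goes wrong is in the resolution of that issue. You assert that the total \v Cech complex ``can be represented \dots\ by a complex of free $A$-modules all of whose differential entries lie in $(t)$'', and then try to argue that the \v Cech (vertical) differentials ``cannot be units.'' But the \v Cech modules are not free, or even finitely generated, over $A$ --- they are localizations --- so speaking of their differential entries as units or non-units in $A$ does not make sense. And in any genuine minimal free $A$-complex representing $\Phi(\FF,\cE)$, the \v Cech direction has already been collapsed to cohomology, so there are no vertical differentials left; the only maps that survive come from zig-zags through the horizontal direction, and it is \emph{those} you would need to show lie in $(t)$. Your sentence about ``the $\m$-part dies while the surviving maps carry explicit powers of $t$'' gestures at this but does not establish it.

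The paper sidesteps the whole problem: rather than trying to build a minimal free $A$-complex, it computes $\Tor_i^A(\FF\cdot\cE,\kk)$ directly. Since $\Tot(C_{\bullet,\bullet})$ is a complex of \emph{flat} $A$-modules quasi-isomorphic to $\FF\cdot\cE$, one gets $\Tor_i^A(\FF\cdot\cE,\kk)$ as $\HH_i\bigl(\Tot(C_{\bullet,\bullet})\otimes_A A/(t)\bigr)$. Tensoring with $A/(t)$ kills every horizontal map (they are in $(t)$) and leaves the vertical \v Cech maps untouched, so $\Tot(C)\otimes A/(t)$ splits as the direct sum of its columns, and $\HH_i$ is simply $\bigoplus_{p-q=i}\bigoplus_j H^q(\cE(-j))^{\beta_{p,j}(\FF)}$ in degree $j$. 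This is exactly the formula, with no need to control higher differentials or produce a minimal model. If you want to salvage your route, you would need a homotopy-perturbation argument replacing each \v Cech column by its (free) cohomology and tracking that the induced horizontal maps remain in $(t)$; that works, but it is strictly more effort than the paper's one-line $\otimes\,A/(t)$ trick.
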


\begin{proof}
Since $\Phi$ commutes with homological shifting, we may assume that $\FF$ is a minimal free complex supported entirely in nonnegative homological degrees, say $\FF=[\FF_0\gets \dots \gets \FF_p]$.  
We  compute $\FF\cdot \cE$ via a  spectral sequence.  First, we consider the double complex $C_{\bullet, \bullet}$ where $C_{i,\bullet}$ is the \v{C}ech resolution of $\left( \tau(\FF_i)\otimes_{\PP^n_A} (\cE\boxtimes \cO_{\AA^1})\right)$ on $\PP^n_A$ with respect to the standard \v{C}ech cover of $\mathbb P^n$.  If we represent all of the maps in $C_{\bullet, \bullet}$ with matrices, then all of the vertical maps (which are induced by the \v{C}ech resolutions) will involve bidegree $(0,0)$ elements, and all of the horizontal maps (which are induced by the maps in $\tau(\FF)$) will involve bihomogeneous elements that are strictly positive in both bidegrees.

Since $\Tot(C_{\bullet, \bullet})$ is a complex of flat $A$-modules that is quasi-isomorphic to $\FF\cdot \cE$, we can obtain the Betti numbers by computing $\Tor(\Tot(C_{\bullet, \bullet}), A/(t))$.  When we tensor by $A/(t)$, the vertical maps of $C_{\bullet, \bullet}$ are unchanged, but the horizontal maps all go to $0$.  
Hence, one spectral sequence degenerates, so the $i$th homology of $\Tot(C_{\bullet,\bullet})/(t)$ equals the sum 
\[
\HH^i(\Tot(C_{\bullet,\bullet})/(t))\cong \bigoplus_{j} \HH^j_{\text{vert}}(C_{i+j,\bullet})
\]

Now we compute the vertical homology.  
Recall that $\FF_i=\oplus_{j\in \ZZ} S(-j)^{\beta_{i,j}(\FF)}$.  For brevity, we use $\beta_{i,j}:=\beta_{i,j}(\FF)$, and we may thus write 
\[
\left( \tau(\FF_i)\otimes_{\PP^n_A} (\cE\boxtimes \cO_{\AA^1})\right)\cong \bigoplus_{j\in \ZZ} \cE(-j)^{\beta_{i,j}}\boxtimes A(-j).
\]
After taking the vertical homology of $C_{\bullet, \bullet}$, we then obtain:
\[
\xymatrix{
\bigoplus_{j} H^n(\PP^n, \cE(-j)^{\beta_{0,j}})\otimes A(-j)&\bigoplus_{j} H^n(\PP^n, \cE(-j)^{\beta_{1,j}})\otimes A(-j)\ar[l]&\dots\ar[l]\\
\vdots & \vdots&\\
\bigoplus_{j} H^1(\PP^n, \cE(-j)^{\beta_{0,j}})\otimes A(-j)&\bigoplus_{j} H^1(\PP^n, \cE(-j)^{\beta_{1,j}})\otimes A(-j)\ar[l]&\dots\ar[l]\\
\bigoplus_{j} H^0(\PP^n,  \cE(-j)^{\beta_{0,j}})\otimes A(-j)&\bigoplus_{j} H^0(\PP^n, \cE(-j)^{\beta_{1,j}})\otimes A(-j)\ar[l]&\dots\ar[l]
}
\]
We conclude that
\[
\Tor^i_A(\FF\cdot \cE, A/(t))\cong \HH^i(\Tot(C_{\bullet,\bullet})/(t))\cong \bigoplus_{p-q=i} \bigoplus_{j} H^q(\PP^n, \cO(-j)^{\beta_{p,j}}\otimes \cE)
\]
which proves the formula.
\end{proof}

\begin{example}\label{ex:1441}
Let $S=\kk[x,y,z]$ and let
\begin{equation}\label{eqn:intro ex}
\beta(\FF)=\begin{bmatrix} 1&-&-&-\\ -&-&-&-\\-&4&4&-\\-&4&4&-\\-&-&-&-\\-&-&-&1 \end{bmatrix}.
\end{equation}

No multiple of the Betti table $\beta(\FF)$ can equal the Betti table of a minimal, free complex with finite length homology. Indeed, such a complex would be a resolution,  so this case is covered by the theory of \cite{eis-schrey1}. But we can also see this directly from the Betti number formula of Theorem~\ref{thm:betti numbers of pairing}:

If $\cE$ is a rank $8$ supernatural bundle of type $(0,-8)$ then 
$\FF\cdot \cE$ would be a minimal complex of the form
\[
\FF\cdot \cE=\left[ \begin{matrix}A(-3)^{240}\\ \oplus \\A(-4)^{256}\end{matrix} \longleftarrow \begin{matrix}A(-4)^{256}\\\oplus \\ A(-5)^{240}\end{matrix}\right].
\]
Since the complex is minimal, the kernel of the map will contain a subsheaf of $A(-4)^{256}$ of rank at least $256-240=16$, and hence $\FF\cdot \cE$ cannot have finite length homology.  By Theorem~\ref{thm:Phi}\eqref{thm:Phi:2} it follows that no multiple of $\beta(\FF)$ can be the Betti table of a complex with finite length homology. 

By contrast, we shall see in Example~\ref{ex:1441bis} that 
 $10\beta(\FF)$ is the Betti table of a complex with 1-dimensional homology.
\end{example}

 Theorem~\ref{thm:betti numbers of pairing} implies that the values of $\Phi$ contain enough
 information to compute the Betti table of a complex $\FF$ or the cohomology
table of a sheaf $\cE$:
\begin{cor} Let $\FF\in \DD^b(S)$ and let $\cE\in \DD^b(\PP^n)$.
\begin{enumerate}
\item 
 $\beta_{i,j}(\FF) = \beta_{i,j}(\FF\cdot \cO_{\P^{n}}(j))$ for all $i,j$, where $\cO_{\PP^n}(j)$ is regarded as a complex concentrated in homological degree 0.
\item $h^{i}(\cE(j)) = \beta_{-i,-j}(S(j)\cdot \cE)$, for all $i,j$, where $S(j)$ is regarded as a complex concentrated in homological degree 0.
\end{enumerate}
%\qed
\end{cor}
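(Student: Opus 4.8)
The plan is to deduce both statements as direct consequences of the Betti number formula in Theorem~\ref{thm:betti numbers of pairing}, namely
\[
\beta_{i,j}(\FF\cdot \cE)=\sum_{\substack{p-q=i\\ j\in \ZZ}}\beta_{p,j}(\FF)\,\gamma_{q,-j}(\cE),
\]
by specializing one of the two arguments to a twist of a free module of rank one, so that the formula collapses to a single term. For part (1), I would set $\cE = \cO_{\PP^n}(j_0)$, viewed as a complex concentrated in homological degree $0$. Then $\gamma_{q,m}(\cO_{\PP^n}(j_0)) = h^q(\cO_{\PP^n}(j_0+m))$, and since $\cO_{\PP^n}(j_0+m)$ has nonzero cohomology only in cohomological degree $q=0$ (when $j_0+m\ge 0$), the sum over $p,q$ forces $q=0$, hence $p=i$, and $-j = -j_0$, i.e. $j=j_0$, with $h^0(\cO_{\PP^n})=1$. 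This yields $\beta_{i,j_0}(\FF\cdot\cO_{\PP^n}(j_0))=\beta_{i,j_0}(\FF)$, which is exactly the claim after renaming $j_0$ to $j$. For part (2), I would instead fix $\FF = S(j_0)$ concentrated in homological degree $0$, so $\beta_{p,m}(S(j_0))$ is nonzero only for $p=0$, $m=j_0$, where it equals $1$. The formula then gives $\beta_{i,m}(S(j_0)\cdot\cE) = \beta_{0,j_0}(S(j_0))\,\gamma_{-i,-j_0}(\cE) = \gamma_{-i,-j_0}(\cE) = h^{-i}(\cE(-j_0))$; setting $i=-i'$, $j_0 = -j'$ recovers $h^{i'}(\cE(j')) = \beta_{-i',-j'}(S(j')\cdot\cE)$.

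I should be slightly careful about one bookkeeping point in part (1): the formula as stated sums $\gamma_{q,-j}(\cE)$ where $j$ ranges over $\ZZ$ and $p-q=i$, so the surviving indices are those with $q=0$ and $H^0(\cO_{\PP^n}(j-j_0))\ne 0$, which for a \emph{minimal} free complex $\FF$ supported in the relevant degrees already isolates $j=j_0$ only in the sense that $\gamma_{0,j_0-j}(\cO(j_0)) = h^0(\cO(j_0-j))$ is $1$ for $j=j_0$ and nonzero for other $j$ as well. This means the naive collapse is not quite to a single term. The fix is to observe that $\Phi(\FF,\cO_{\PP^n}(j))$ is still a minimal complex — more precisely, one argues as in the proof of Theorem~\ref{thm:betti numbers of pairing} that since $\cE=\cO_{\PP^n}(j)$ has cohomology only in degree zero, the vertical homology of the double complex is concentrated in a single row, the associated spectral sequence has all differentials zero after the $E_2$ page by the bidegree argument (horizontal maps become $0$ mod $t$), and hence the complex $\Phi(\FF,\cO_{\PP^n}(j))$ is minimal with precisely the stated Betti numbers; then comparing graded pieces degree by degree gives $\beta_{i,j}(\FF) = \beta_{i,j}(\FF\cdot\cO_{\PP^n}(j))$ on the nose. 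Alternatively, and more cleanly, one notes that $\tau(\FF)\otimes (\cO_{\PP^n}(j)\boxtimes\cO_{\AA^1})$ pushes forward to a complex whose degree-$0$ (in the $A$-grading) part is exactly $\FF\otimes\kk$ twisted appropriately, recovering $\beta_{i,j}(\FF)$ directly.

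The analogous subtlety does not arise in part (2), because there the \emph{algebraic} side $\FF=S(j_0)$ has its Betti table supported at a single spot $(0,j_0)$, so the sum genuinely has at most one nonzero term and the identity $\beta_{i,j}(S(j)\cdot\cE) = h^{-i}(\cE(-j))$ is immediate from the formula with no minimality discussion needed. So the main obstacle is purely the indexing care in part (1): making sure that after setting $\cE=\cO(j)$ the claimed equality of Betti \emph{numbers} (not just Betti tables up to the data they determine) holds exactly, which I would handle by invoking minimality of $\Phi(\FF,\cO(j))$ as extracted from the proof of Theorem~\ref{thm:betti numbers of pairing}. Everything else is a one-line substitution into the displayed formula together with the classical computation of the cohomology of line bundles on $\PP^n$.
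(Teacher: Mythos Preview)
Your approach is the same as the paper's: both parts follow by direct substitution into the Betti number formula of Theorem~\ref{thm:betti numbers of pairing}. Part~(2) you handle cleanly. The trouble is your reading of the formula in part~(1).

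In the displayed formula
\[
\beta_{i,j}(\FF\cdot \cE)=\sum_{\substack{p-q=i\\ j\in \ZZ}}  \beta_{p,j}(\FF)\,\gamma_{q,-j}(\cE),
\]
the index $j$ is \emph{not} being summed over; it is the same $j$ as on the left-hand side (the ``$j\in\ZZ$'' under the summation sign is an unfortunate notational artifact meaning ``this holds for all $j$''). This is clear from the proof of Theorem~\ref{thm:betti numbers of pairing}, which identifies the degree-$j$ piece of $\Tor^A_i(\FF\cdot\cE,A/(t))$ with $\bigoplus_{p-q=i} H^q(\cE(-j))^{\beta_{p,j}(\FF)}$. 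With this correct reading, setting $\cE=\cO_{\PP^n}(j)$ and computing $\beta_{i,j}(\FF\cdot\cO(j))$ (same $j$ on both sides) gives
\[
\beta_{i,j}(\FF\cdot\cO_{\PP^n}(j))=\sum_{p-q=i}\beta_{p,j}(\FF)\,h^q(\cO_{\PP^n})=\beta_{i,j}(\FF),
\]
since $h^q(\cO_{\PP^n})$ is $1$ for $q=0$ and $0$ otherwise. The collapse to a single term is genuine and immediate; there is no need for any discussion of minimality, spectral sequences, or the degree-$0$ part of the pushforward. Your second paragraph addresses a phantom obstacle arising from treating $j$ as a summation variable, and you should simply delete it.
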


The following condition for $\FF\cdot \cE$ to have finite length homology will play a central role in our theory: 

\begin{prop}\label{prop:exact}
Let $\FF\in \DD^b(S)$ and $\cE\in \DD^b(\PP^n)$.  If $\widetilde \FF\otimes \cE$ is exact, then the homology of the complex $\Phi(\FF,\cE)$ has finite length.
\end{prop}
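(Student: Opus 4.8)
We may represent $\FF$ by a bounded complex of finitely generated free graded $S$-modules, and write $\FF_i=\bigoplus_j S(-j)^{\beta_{i,j}}$. Since $\Phi(\FF,\cE)\in\DD^b(A)$, its homology consists of finitely generated graded $A$-modules, and such a module has finite length if and only if it is annihilated by a power of $t$, i.e. if and only if it becomes zero after the flat base change $A\to A[t^{-1}]=\kk[t,t^{-1}]$. As this base change is flat it commutes with homology, so the plan is to show
\[
\Phi(\FF,\cE)\otimes_A A[t^{-1}]\simeq 0 .
\]

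First I would use flat base change along the open immersion $\GG_m=\Spec A[t^{-1}]\hookrightarrow\AA^1$. Writing $p_2'\colon\PP^n\times\GG_m\to\GG_m$ and $q_1\colon\PP^n\times\GG_m\to\PP^n$ for the projections, and restricting the object $\tau(\FF)\otimes(\cE\boxtimes\cO_{\AA^1})$ from $\PP^n\times\AA^1$, one gets
\[
\Phi(\FF,\cE)\otimes_A A[t^{-1}]\;\simeq\;R(p_2')_*\!\left(\tau(\FF)|_{\PP^n\times\GG_m}\otimes\bigl(\cE\boxtimes\cO_{\GG_m}\bigr)\right),
\]
so it suffices to prove that $\tau(\FF)|_{\PP^n\times\GG_m}\otimes(\cE\boxtimes\cO_{\GG_m})$ is acyclic.

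The heart of the argument is that inverting $t$ ``untwists'' $\tau$. Concretely, I would produce a map of complexes of graded sheaves $\psi\colon\tau(\FF)\to\widetilde{\FF}\boxtimes\cO_{\AA^1}$ on $\PP^n\times\AA^1$ which, on the summand $\cO_{\PP^n}(-j)\boxtimes A(-j)$ of $\tau(\FF_i)$ coming from the summand $S(-j)^{\beta_{i,j}}$ of $\FF_i$, is $\id_{\cO_{\PP^n}(-j)}\boxtimes(\,t^{j}\cdot\,)$. The differential of $\tau(\FF)$ on the block $\cO(-j)\boxtimes A(-j)\to\cO(-j')\boxtimes A(-j')$ is multiplication by $t^{\,j-j'}f$, where $f$ is the corresponding entry of the differential of $\FF$; hence $\psi$ is a chain map, the powers of $t$ cancelling ($t^{j'}\cdot t^{\,j-j'}=t^{j}$). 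Over $\GG_m$ the scalar $t^{j}$ is a unit, so $\psi$ restricts to an isomorphism of complexes there, giving
\[
\tau(\FF)|_{\PP^n\times\GG_m}\otimes\bigl(\cE\boxtimes\cO_{\GG_m}\bigr)\;\simeq\;q_1^{*}\widetilde{\FF}\otimes q_1^{*}\cE\;=\;q_1^{*}\bigl(\widetilde{\FF}\otimes\cE\bigr).
\]
Since $\widetilde{\FF}$ is a bounded complex of locally free sheaves, $\widetilde{\FF}\otimes\cE=\widetilde{\FF}\Lotimes\cE$, which is acyclic by hypothesis; and $q_1$ is flat, so $q_1^{*}$ preserves acyclicity. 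This completes the argument.

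I expect the only real subtlety to be the bookkeeping in defining $\psi$ and checking it is a global chain map: the twists $A(-j)$ are genuinely present on $\PP^n\times\AA^1$, so $\psi$ is an isomorphism only after inverting $t$, not before. (In stacky language this is immediate: restricting $\Sigma\colon\PP^n\times[\AA^1/\GG_m]\to[\AA^{n+1}/\GG_m]$ to the open substack $\{t\neq 0\}$ identifies it with the projection $\PP^n\times(\mathrm{pt})\to\PP^n$ followed by the open immersion $\PP^n\hookrightarrow[\AA^{n+1}/\GG_m]$, whence $\Sigma^{*}\FF$ restricts to $q_1^{*}\widetilde{\FF}$.) Beyond that, everything is a routine application of flat base change for the proper map $p_2$ and of exactness of flat pullback.
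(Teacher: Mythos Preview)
Your proposal is correct and follows essentially the same route as the paper's proof: reduce to showing the homology is $t$-power torsion, use flat base change along $\GG_m\hookrightarrow\AA^1$, observe that over $\GG_m$ the map $\sigma$ becomes the standard inclusion composed with the invertible substitution $x_i\mapsto x_it$ (your explicit chain map $\psi$ makes this concrete), and conclude from the hypothesis that $\widetilde{\FF}\otimes\cE$ is exact. The paper phrases the untwisting step more tersely and invokes a spectral sequence where you simply note that the derived pushforward of an acyclic complex is zero, but the argument is the same; one small caveat is that your map $\psi$ as written (multiplication by $t^j$) is only well-defined on $\PP^n\times\AA^1$ when all the twists $j$ are nonnegative, so in general you should either define $\psi$ directly over $\GG_m$ or first shift $\FF$ so that all occurring $j$ are nonnegative.
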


\begin{proof} It suffices to show that the homology of $\Phi(\FF,\cE)$ is annihilated by
a power of $t$. After inverting $t$ the map $\sigma$ becomes the usual inclusion $S\subset S[t,t^{-1}]$
composed with the invertible change of variables $x_{i}\mapsto x_{i}t$. Thus the complex 
$$
\Gbull:=\bigl(\widetilde\FF\otimes_{\sigma}\cO_{\PP^{n}\times \Spec A[t^{-1}]}\bigr)
\otimes_{\PP^{n}\times \Spec A[t^{-1}]}
\cE \cong \widetilde \FF \otimes \cE \otimes \cO_{\Spec A[t^{-1}]}
$$
has no homology. It follows by a spectral sequence computation that 
the complex $R\pi_{2*}\Gbull$ on $\Spec A[t^{-1}]$ has no homology. By flat base change,
this is equal to the restriction of $\Phi(\FF,\cE)$ on the open set $\Spec A[t^{-1}]\subseteq \Spec A$, and we see that the homology
of $\Phi(\FF,\cE)$ is annihilated by a power of $t$, as required.
\end{proof}

%%%%%%%%%%%%%%%%%%%%%%%%%%%%%%%%%%%%
%%%%%%%%%%%%%%%%%%%%%%%%%%%%%%%%%%%%
\section*{\underline{{Part II: Free Complexes with Homology}}}
%%%%%%%%%%%%%%%%%%%%%%%%%%%%%%%%%%%%
%%%%%%%%%%%%%%%%%%%%%%%%%%%%%%%%%%%%
\section{Decomposing Betti Tables}\label{sec:refined}
%%%%%%%%%%%%%%%%%%%%%%%%%%%%%%%%%%%%
%%%%%%%%%%%%%%%%%%%%%%%%%%%%%%%%%%%%
The main result of this section and the next two is Theorem~\ref{thm:extremal rays refined},
which extends the main decomposition results in \BS theory from \cites{eis-schrey1,boij-sod2}
to describe the cones of Betti tables of free complexes with  homology of at least a given codimension. More generally, we can treat certain cases where the homology modules have distinct codimensions. For this we make use of the following definitions.

A \defi{codimension sequence} is an indexed, nondecreasing,
doubly infinite sequence
$$
\cc=(\dots, c_{-1}, c_{0}, c_{1}, \dots )
$$
where  
$$
c_{i}\in \{\nothing\} \cup \{0,1,\dots,n+1\}\cup \{\infty\},
$$
for each $i$ and where we take the convention $\nothing<0$. As with degree sequences,
we will sometimes indicate the position of $c_{0}$ with $^{\circ}$.

We say that a
minimal graded free complex $\FF$ over $S$ (or the element of $\DD^{b}(S)$ that it represents) is \defi{compatible with $\cc$} if 
$
\codim \HH_i(\FF) \geq c_i, \text{ for all } i,
$
and if $\FF_i=0$ whenever $c_i=\nothing$.

Here are the cones of Betti tables and cohomology tables we will consider.
%\begin{defn}\label{defn:cones}
We write $\BBQ^{\cc}(S)$ for the subcone of $\VV$ spanned by $\beta(\FF)$ as $\FF$ ranges over all elements of $\DD^b(S)$ that are compatible with $\cc$.  For an integer $k\in \ZZ$, we write $\BBQ^k(S)$ to denote the cone corresponding to the codimension sequence $(\dots, k,k,\dots)$.
For any $k=0, \dots, n$, we define $\CQ^k(\PP^n)$ to be the subcone of $\WW$ spanned by all $\cE\in \DD^b(\PP^n)$ satisfying $\codim(\cE)\geq k$.
%\end{defn}

%\begin{defn}\label{defn:deg compatible}
Suppose that $\cc$ is a codimension sequence. Let $\dd$ be a degree sequence 
of codimension $\ell\leq n+1$ (Definition~\ref{def:deg seq}), having the form
$$
\cdots -\infty, d_{k}, \cdots, d_{k+\ell}, \infty, \cdots
$$
with $d_{k},\dots, d_{k+\ell}\in \ZZ$.
We say that $\dd$ is \defi{compatible} with $\cc$ if $c_{k}\leq \ell \leq c_{k+1}$. 
%\end{defn}
For example, if the terms of $\cc$ are all equal to $k$, then the degree sequences
compatible with $\cc$ are precisely those of codimension $k$.

Also, it follows from the ``Lemme d'acyclicit\'e'' of Peskine and Szpiro~\cite{MR0374130}
%main result of \cite{WMACE}\daniel{what citation?} 
that if a complex
$$
\FF= [\FF_{k}\leftarrow \cdots \leftarrow \FF_{k+\ell}\leftarrow 0],
$$
has $i$-th homology $\HH_{i}\FF$ of codimension $\geq\ell$ for $i>k$, then $\FF$ is actually a (shifted) resolution, and the module $\HH_{k}\FF$ that it resolves must have codimension $\leq \ell$. 
Thus, if $\FF$ is a pure complex of type $\dd$, and if both $\FF$ and $\dd$ are compatible with a codimension sequence $\cc$, then $\FF$ is a homologically
shifted resolution of a Cohen--Macaulay module of codimension $\ell$.
%Thus if $\FF$ is a pure complex with degree sequence $\dd = (d_{k}, \dots,d_{k+\ell})$ (that is,   $\FF_{i}$ is generated in degree $d_{i}$ for each $i$) that is compatible with a codimension sequence $\cc$
%then
%$\FF$ is a homologically shifted resolution of a module whose projective dimension is $\ell$ and whose codimension is between
%$c_{k}$ and $\ell$. 

Our main result is a description of the structure of a simplicial fan for $\BBQ^{\cc}(S)$ in terms of  Betti tables of pure resolutions.  

\begin{thm}[Decomposition Theorem]\label{thm:extremal rays refined}
Fix a codimension sequence $c$.
If $\FF\in \DD^b(S)$  is compatible with $\cc$ then  $\beta(\FF)$ can be expressed
uniquely as a positive rational linear combination of the Betti tables of shifted pure resolutions of Cohen--Macaulay modules whose degree sequences form a chain and are compatible with $\cc$.

Thus the cone $\BBQ^{\cc}(S)$ is locally a simplicial fan, and there is a natural bijection:
\[
\left\{
\begin{matrix}
\text{Extremal rays of }\\
\text{the cone } \BBQ^{\cc}(S)
\end{matrix}
\right\}
\longleftrightarrow
\left\{
\begin{matrix}
\text{Shifted degree sequences }\\
\text{ compatible with $\cc$}
\end{matrix}
\right\},
\]
where the  degree sequence $\dd$ corresponds to the ray spanned by 
the Betti table of any shifted pure resolution of type $\dd$ of a Cohen-Macaulay module. 
\end{thm}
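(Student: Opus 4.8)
The strategy is to deduce Theorem~\ref{thm:extremal rays refined} from the duality established in Theorem~\ref{thm:duality} (equivalently, from Theorem~\ref{thm:Phi} and Proposition~\ref{prop:exact}), following the general shape of the Eisenbud--Schreyer argument but adapted to complexes. First I would set up the convex-geometry framework: the cone $\BBQ^{\cc}(S)$ lives in $\VV$, and I want to show that the rays spanned by Betti tables of shifted pure resolutions of Cohen--Macaulay modules whose degree sequences are compatible with $\cc$ are exactly the extremal rays, and that they fit together into a locally simplicial fan whose maximal cones are indexed by chains of such degree sequences. The existence half---that every $\beta(\FF)$ with $\FF$ compatible with $\cc$ lies in the cone generated by these pure Betti tables---is the decomposition statement, and the plan is to prove it by a greedy/peeling algorithm: given $\FF$, read off from $\beta(\FF)$ the ``smallest'' compatible degree sequence $\dd$ (taking, in each homological slot allowed by $\cc$, the minimal $j$ with $\beta_{i,j}(\FF)\neq 0$), subtract the appropriate rational multiple of the pure Betti table of type $\dd$ so that at least one entry drops to zero while all entries stay nonnegative, and check that the residual table is still of the form $\beta(\FF')$ for some $\FF'$ compatible with $\cc$. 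The termination and nonnegativity of this algorithm is a finite combinatorial check once one knows the relevant positivity; that positivity is exactly what the pairing supplies.

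The key input is the following: for each degree sequence $\dd$ \emph{not} compatible with $\cc$ in the appropriate way, there is a supernatural sheaf $\cE$ of complementary codimension such that the composite functional $\beta(\FF)\mapsto \beta(\Phi(\FF,\cE))\mapsto (\text{nonnegative functional on generically exact }A\text{-complexes})$ is a \emph{supporting functional} for $\BBQ^{\cc}(S)$, vanishing precisely on the pure Betti tables whose degree sequences interleave correctly with $\dd$. Concretely: by Theorem~\ref{thm:betti numbers of pairing} the number $\beta_{i,j}(\Phi(\FF,\cE))=\sum_{p-q=i}\beta_{p,j}(\FF)\gamma_{q,-j}(\cE)$ is manifestly a nonnegative combination of the $\beta_{p,j}(\FF)$; composing with the (finitely many, explicit) facet functionals of the cone of Betti tables of generically exact free $A$-complexes---which is easy to describe, as noted in the excerpt---produces nonnegative functionals on $\BBQ^{\cc}(S)$ when $\FF$ and $\cE$ have complementary codimensions (so that $\widetilde\FF\otimes\cE$ is exact and Proposition~\ref{prop:exact} applies, forcing $\Phi(\FF,\cE)$ to be generically exact). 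Running $\cE$ over the supernatural sheaves of each allowed codimension, and $A$-facet-functionals over their (essentially unique, up to degree shift) choices, yields a family of nonnegative functionals on $\BBQ^{\cc}(S)$; the heart of the matter is a numerical computation---the ES-style ``pairing of a pure resolution with a supernatural sheaf''---showing that each such functional vanishes on a pure Betti table of type $\dd'$ exactly when $\dd'$ interleaves the root sequence of $\cE$, and is strictly positive otherwise. These equalities pin down which faces of the cone the pure rays lie on, force the pure rays to be extremal, and show that the only chains of degree sequences that span a common face are the ones in the statement; combined with the decomposition algorithm this gives both the simplicial-fan structure and the asserted bijection, and the uniqueness of the decomposition follows because the algorithm has no choices.

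The main obstacle is the numerical computation in the previous paragraph: verifying that the functional attached to a supernatural sheaf $\cE$ of root sequence $f$ and an $A$-facet-functional, when evaluated on the pure Betti table of type $\dd'$, is nonnegative with the right vanishing locus, and in particular that the resulting inequalities are \emph{sharp enough} to cut out a locally simplicial fan with exactly the claimed maximal cones. In the classical Boij--S\"oderberg setting this is the content of the Eisenbud--Schreyer pairing computation (the ``$\langle\FF,\cE\rangle_{\tau,\kappa}\geq 0$'' estimates together with the determination of when equality holds); here one must redo it allowing $\FF$ to be a \emph{shifted} pure resolution rather than an honest resolution, and allowing the codimension data to vary with the homological degree according to $\cc$. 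I expect this to reduce, via the Herzog--K\"uhl-type relations that a pure complex of type $\dd'$ satisfies, to a sum-of-products expression in the $d'_i$ and the roots $f_j$ that one recognizes as nonnegative by a Cauchy--Binet or Vandermonde sign argument, with equality governed by the interleaving condition; but matching up the book-keeping (which facet functional of the $A$-cone pairs with which supernatural type, and how the codimension sequence $\cc$ restricts the allowed interleavings) is where the real work lies. The remaining steps---the peeling algorithm, its termination, and translating ``supporting functionals + spanning rays'' into ``locally simplicial fan + bijection with chains''---are then formal, modulo the standard care needed because $\VV$ is infinite-dimensional and one must restrict to the finite-dimensional subspaces cut out by bounding the Betti table's support.
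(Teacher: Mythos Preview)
Your overall strategy matches the paper's: restrict to finite-dimensional windows, show that the cone spanned by compatible pure Betti tables is a simplicial fan, and then use functionals coming from $\Phi(-,\cE)$ composed with facet functionals on $\BBQ^1(A)$ to prove that $\BBQ^{\cc}(S)$ is contained in that fan. Two points deserve sharpening.

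First, the step ``check that the residual table is still of the form $\beta(\FF')$ for some $\FF'$ compatible with $\cc$'' is not how the argument runs, and as stated it is circular: the residual after subtracting a rational multiple of a pure table is typically not the Betti table of any actual complex, so you cannot iterate by staying inside $\BBQ^{\cc}(S)$. The paper instead identifies the boundary facets of the simplicial fan $\Sigma$ spanned by the pure tables and shows each facet functional is nonnegative on $\BBQ^{\cc}(S)$; the greedy algorithm is then a consequence of the equality $\BBQ^{\cc}(S)=\Sigma$, not a mechanism for proving it.

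Second, invoking Theorem~\ref{thm:duality} is not quite enough: that theorem only treats constant codimension sequences $\cc=(\dots,k,k,\dots)$. For a general $\cc$ the paper needs the refinement in Lemma~\ref{lem:refined positivity}, which says that if $\cc_0=k$ and $\cE$ has codimension $\geq n+1-k$, then $\Phi(\FF,\cE)$ lands in $\BBQ^{\cc'}(A)$ with $\cc'_i=1$ for $i\geq 0$ and $\cc'_i=0$ for $i<0$. This localized control is what lets you apply $\chi_{i,j}$ at the correct homological position. Relatedly, the facet classification acquires genuinely new types beyond the resolution case---the paper calls them ``homological shifts'' and ``codimension shifts,'' where adjacent degree sequences in the poset differ by sliding the finite window or by dropping the codimension---and each needs its own choice of supernatural $\cE_f$ and $\chi_{i,j}$ (cases (iv) and (v) in the paper). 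Your acknowledgment that ``matching up the book-keeping\dots is where the real work lies'' is accurate, but these are the specific new ingredients you would need to supply.
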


By the statement that $\BBQ^{\cc}(S)$ is locally a simplicial fan we mean that if we fix any finite dimensional subspace $\VV_{\text{fin}}\subseteq \VV$ defined by the vanishing of coordinate vectors, then the restricted cone $\VV_{\text{fin}}\cap \BBQ^{\cc}(S)$ is a simplicial fan (and, in particular,
is polyhedral). 

The proof of Theorem~\ref{thm:extremal rays refined} involves two essential steps:  In \S\ref{sec:A}, we provide a detailed description of cones of Betti tables on $A$.  Since $\DD^b(A)$ is the target of the pairing $\Phi$, these results will provide a base case for the theorem.  Then, in \S\ref{sec:refined proof}, 
we use the nonnegative functionals obtained by combining  \S\ref{sec:A} with Theorem~\ref{thm:Phi} to complete the proof of Theorem~\ref{thm:extremal rays refined}. 

\begin{cor}\label{cor:decompose refined}
If $\FF\in \DD^b(S)$ is compatible with a codimension sequence $\cc$,
then there exist Cohen-Macaulay graded $S$-modules $M^k$ of codimension $\geq \cc_k$ and nonnegative rational numbers $a_k$ such that
\[
\beta(\FF)=\sum_{k\in \ZZ} a_k\beta(M^k[k]).
\]
\end{cor}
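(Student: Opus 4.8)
The plan is to deduce Corollary~\ref{cor:decompose refined} directly from the Decomposition Theorem (Theorem~\ref{thm:extremal rays refined}) by collecting the pure summands of that decomposition according to the homological degree in which each one begins.

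First I would apply Theorem~\ref{thm:extremal rays refined} to the given complex $\FF$, which is compatible with $\cc$, to write $\beta(\FF)$ as a finite sum $\sum_i b_i\,\beta(\Gbull_i)$ in which each $b_i$ is a positive rational, each $\Gbull_i$ is a shifted pure resolution of a Cohen--Macaulay module, and each degree sequence $\dd_i$ of $\Gbull_i$ is compatible with $\cc$. Next I would use the consequence of the ``Lemme d'acyclicit\'e'' of Peskine and Szpiro~\cite{MR0374130} recorded in the text just before Theorem~\ref{thm:extremal rays refined}: if $\dd_i$ has codimension $\ell_i$ and its leftmost finite entry lies in homological position $k_i$, then $\Gbull_i$ is the shift $N_i[k_i]$ of the minimal free resolution of a Cohen--Macaulay module $N_i$ of codimension $\ell_i$, and compatibility of $\dd_i$ with $\cc$ forces $\ell_i\ge c_{k_i}=\cc_{k_i}$.

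Then I would sort the summands by the value of $k_i$: for each $k\in\ZZ$ set $I_k=\{\,i:k_i=k\,\}$, choose a common denominator $q_k$ for the finitely many rationals $\{\,b_i:i\in I_k\,\}$, write $b_i=p_i/q_k$ with $p_i$ a positive integer, and put $a_k=1/q_k$ and $M^k=\bigoplus_{i\in I_k}N_i^{\oplus p_i}$ (and $a_k=0$, $M^k=0$ when $I_k=\emptyset$). Since Betti numbers are additive over direct sums and $\Gbull_i\simeq N_i[k]$ for $i\in I_k$, we get $a_k\,\beta(M^k[k])=\sum_{i\in I_k}b_i\,\beta(\Gbull_i)$, and summing over $k\in\ZZ$ recovers $\beta(\FF)=\sum_{k\in\ZZ}a_k\,\beta(M^k[k])$, with each $M^k$ a direct sum of Cohen--Macaulay modules of codimension $\ge\cc_k$.

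Given Theorem~\ref{thm:extremal rays refined} there is no real obstacle: the argument is bookkeeping with finitely many positive rational coefficients, and the only nontrivial input is the Peskine--Szpiro identification of pure compatible complexes with shifted resolutions of Cohen--Macaulay modules of the prescribed codimension, which is already isolated in the text. The one point deserving a remark is that the direct sum $M^k$ is itself Cohen--Macaulay exactly when its summands have a common dimension, i.e. when the degree sequences $\dd_i$ with $k_i=k$ all have the same codimension; this is automatic whenever $\cc$ is constant --- the setting of Corollary~\ref{cor:decompose} and of the principal applications --- since then every degree sequence compatible with $\cc$ has codimension equal to the common value of $\cc$.
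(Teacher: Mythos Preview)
The paper states this corollary without proof, treating it as an immediate consequence of Theorem~\ref{thm:extremal rays refined}; your approach---apply that theorem and then collect the pure summands by their starting homological position---is precisely what is intended, and your bookkeeping with common denominators is correct.

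You also correctly flag a genuine subtlety in the \emph{statement} rather than a gap in your argument. When $c_k < c_{k+1}$, the chain of degree sequences produced by Theorem~\ref{thm:extremal rays refined} can contain pure pieces at the same starting position $k$ but of different codimensions (via the ``codimension shifts'' in the proof of that theorem), and then the direct sum $M^k=\bigoplus_{i\in I_k} N_i^{\oplus p_i}$ fails to be Cohen--Macaulay. What your argument proves in full generality is that each $M^k$ is a direct sum of Cohen--Macaulay modules of codimension $\geq c_k$; the literal Cohen--Macaulay conclusion for $M^k$ holds whenever $c_k=c_{k+1}$ at each relevant $k$, in particular for constant $\cc$ as in Corollary~\ref{cor:decompose} and the main applications.
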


\begin{example}[Decomposition Algorithm]\label{example:greedy decomposition}
The simplicial fan structure of $\BBQ^c(S)$ yields a decomposition algorithm parallel to that in~\cite[\S1]{eis-schrey1}, and we illustrate this via an example.  Let $S=\kk[x,y,z]$ and let $I=(x^2,xy,y^2,xz)$ and $J=(xy)$.  Let $\FF'$ be the minimal free resolution of $S/I$ and let $\FF''$ be the minimal free resolution of $S/J$.  We then consider the complex $\FF=\FF'\otimes \FF''$, so
\[
\beta(\FF)=
\begin{bmatrix}
1^\zp&-&-&-&-\\
-&{5}&{4}&{1}&-\\
-&-&4&4&{1}
\end{bmatrix}.
\]
We have that $\HH_i\FF=\Tor_i(S/I,S/J)$, and hence $\FF$ is compatible with the codimension sequence $\cc=(\emptyset, 2^\zp,2,\infty,\dots)$.    

To decompose a Betti table $\beta(\FF)$, we always choose the minimal degree sequence that is compatible with $\cc$ and that could possibly contribute to the Betti table.  Based on the partial order of degree sequences, this implies that the decomposition algorithm proceeds from the upper right corner to the lower left corner, and that we always zero out the rightmost column before shifting in homological degree.  In our example, since $\cc_2=\infty$ and $\cc_1<\infty$, we consider the top strand of $\FF$ starting in column $1$.  This yields the degree sequence $(\dots,\emptyset^\zp,2,3,4,6,\infty,\dots)$, which is compatible with $\cc$.  We then apply a greedy algorithm, subtracting as much of the corresponding pure diagram from $\beta(\FF)$ as possible, without making any entry negative.
\[
\begin{bmatrix}
1^\zp&-&-&-&-\\
-&\mathbf{5}&\mathbf{4}&\mathbf{1}&-\\
-&-&4&4&\mathbf{1}
\end{bmatrix}
-
\begin{bmatrix}
-^\zp&-&-&-&-\\
-&\frac{1}{2}&\frac{4}{3}&1&-\\
-&-&-&-&\frac{1}{6}
\end{bmatrix}
=
\begin{bmatrix}
1^\zp&-&-&-&-\\
-&\frac{9}{2}&\frac{8}{3}&-&-\\
-&-&4&4&\frac{5}{6}
\end{bmatrix}.
\]
The second step of the decomposition is similar:
\[
\begin{bmatrix}
1^\zp&-&-&-&-\\
-&\mathbf{\frac{9}{2}}&\mathbf{\frac{8}{3}}&-&-\\
-&-&4&\mathbf{4}&\mathbf{\frac{5}{6}}
\end{bmatrix}
-
\begin{bmatrix}
-^\zp&-&-&-&-\\
-&\frac{5}{6}&\frac{5}{3}&-&-\\
-&-&-&\frac{5}{3}&\frac{5}{6}
\end{bmatrix}
=
\begin{bmatrix}
1^\zp&-&-&-&-\\
-&\frac{11}{3}&1&-&-\\
-&-&4&\frac{7}{3}&-
\end{bmatrix}
\]
After the second step, we have zeroed out the last column.  Since $c_1=2$, we next consider degree sequences of codimension $2$ that start in column.  The final decomposition is:
\begin{align*}
\beta(\FF)=&
\begin{bmatrix}
-^\zp&-&-&-&-\\
-&\frac{1}{2}&\frac{4}{3}&1&-\\
-&-&-&-&\frac{1}{6}
\end{bmatrix}
+
\begin{bmatrix}
-^\zp&-&-&-&-\\
-&\frac{5}{6}&\frac{5}{3}&-&-\\
-&-&-&\frac{5}{3}&\frac{5}{6}
\end{bmatrix}
+
\begin{bmatrix}
-^\zp&-&-&-&-\\
-&\frac{2}{3}&1&-&-\\
-&-&-&\frac{1}{3}&-
\end{bmatrix}\\
&
+
\begin{bmatrix}
-^\zp&-&-&-&-\\
-&1&-&-&-\\
-&-&3&2&-
\end{bmatrix}
+
\begin{bmatrix}
1^\zp&-&-&-&-\\
-&2&-&-&-\\
-&-&1&-&-
\end{bmatrix},
\end{align*}
with the corresponding chain of degree sequences
\begin{align*}
(\dots,\emptyset^\zp,2,3,4,6,\infty,\dots)<(\dots,\emptyset^\zp,2,3,5,6,\infty,\dots)<(\dots,\emptyset^\zp,2,3,5,\infty\dots)
\\
<(\dots,\emptyset^\zp,2,4,5,\infty,\dots)<(\dots,0^\zp,2,4,\infty,\dots).
\end{align*}
At the final step of this decomposition, we shift over by a column, using a degree sequence $d$ where $d_0\in\ZZ$.  This is because we have run out of degree sequences with $d_0=-\infty$ that are compatible with $\cc$.  In particular, the degree sequence $(\dots, -\infty^\zp,2,4,\infty,\dots)$ only has codimension $1$, and is thus not compatible with $\cc$.
\end{example}

\begin{example}\label{example:many decompositions}
For any complex $\FF\in \DD^b(S)$, there exist different values of $\cc$ that are compatible with $\FF$, and the decomposition of $\beta(\FF)$ induced by Theorem~\ref{thm:extremal rays refined} may depend on the choice of $\cc$.   For instance, let $S=\kk[x,y,z]$, $I=(x^2,xy,y^2,xz)$, and let $\FF$ be the minimal free resolution of $S/I$.  Since $\FF$ is a resolution, we may choose $\cc=(\dots, \nothing, 2^\zp, \infty, \dots)$ and then we decompose
\[
\beta(\FF)=\begin{bmatrix}
1^\zp&-&-&-\\
-&4&4&1
\end{bmatrix}
=
\frac{1}{3}
\begin{bmatrix}
1^\zp&-&-&-\\
-&6&8&3
\end{bmatrix}
+\frac{2}{3}
\begin{bmatrix}
1^\zp&-&-\\
-&3&2
\end{bmatrix}.
\]
$\FF$ is also compatible with $\cc'=(\dots, 2, 2^\zp, 2, \dots)$.  In this case, we obtain the decomposition:
\[
\beta(\FF)=
\begin{bmatrix}
1^\zp&-&-&-\\
-&4&4&1
\end{bmatrix}
=\begin{bmatrix}1^\zp&-&-\\-&3&2\end{bmatrix}
+
\begin{bmatrix}
-^\zp&-&-&-\\
-&1&2&1
\end{bmatrix}
\] 
This second decomposition is stable under taking a hyperplane section.  Namely, set $S':=S/(\ell)$, where $\ell$ is a generic linear form, and let $\FF'$ be the restriction of $\FF$ to $S'$.  Since $\text{depth}(S/I)=0$, the complex $\FF'$ is not a resolution, but it is still compatible with $\cc'$, and hence the second decomposition still holds for $\FF'$.
\end{example}

\begin{example}[Resolutions]\label{ex:resolutions}
 If  $\cc=(\dots, \nothing, n+1^\zp, \infty, \dots)$, then a complex $\FF$ is compatible with $\cc$ if and only if $\FF$ is the minimal free resolution of an $S$-module of finite length, and we recover \cite[Theorem~0.2]{eis-schrey1}. More generally, the resolution of any module is compatible with
$\cc=(\dots, \nothing, 0^\zp, \infty, \dots)$, and we recover the main results of \cite{boij-sod2}. 
\end{example}

\begin{example}[$\cO_{\PP^n}$-resolutions]\label{ex:sheaf resolutions}
If $\cc=(\dots,\nothing,0^{\zp},n+1,n+1,\dots)$ then a complex $\FF$ is compatible with $\cc$ if and only if $\widetilde{\FF}$ resolves a coherent $\cO_{\PP^n}$-module.
\end{example}

\begin{example}[Approximate resolutions]
Let $\cc=(\dots,\nothing,0^\zp,n-1,n-1,\dots,)$.   Let $\FF\in \DD^b(S)$ be compatible with $\cc$ and let $M$ be the $0$th homology module of $\FF$.  We note that $\FF$ provides an approximate resolution of $M$, in the sense that the homology of $[\widetilde{\FF}\to \widetilde{M}]$ has dimension at most $1$.  Such approximate resolutions play a key role in \cite[Lemma~1.6]{gruson-lazarsfeld-peskine}, where they are used to bound the Castelnuovo--Mumford regularity of $\widetilde{M}$.
\end{example}

\begin{example}\label{ex:1441bis}
Returning to the complex $\FF$ of Example~\ref{ex:1441}
we see from the following decomposition that, although $\beta(\FF)\notin \BBQ^3(S),$ it does lie in $\BBQ^2(S)$:
\[
\begin{bmatrix} 1^\zp&-&-&-\\ -&-&-&-\\-&4&4&-\\-&4&4&-\\-&-&-&-\\-&-&-&1 \end{bmatrix}
=
\begin{bmatrix} -^\zp&-&-&-\\ -&-&-&-\\-&\frac{16}{5}&4&-\\-&-&-&-\\-&-&-&-\\-&-&-&\frac{4}{5} \end{bmatrix}
+
\begin{bmatrix} -^\zp&-&-&-\\ -&-&-&-\\-&\frac{3}{10}&-&-\\-&-&\frac{1}{2}&-\\-&-&-&-\\-&-&-&\frac{1}{5} \end{bmatrix}
+
\begin{bmatrix} \frac{1}{5}^\zp&-&-&-\\ -&-&-&-\\-&\frac{1}{2}&-&-\\-&-&\frac{3}{10}&-\\-&-&-&-\\-&-&-&- \end{bmatrix}
+
\begin{bmatrix} \frac{4}{5}^\zp&-&-&-\\ -&-&-&-\\-&-&-&-\\-&4&\frac{16}{5}&-\\-&-&-&-\\-&-&-&- \end{bmatrix}.
\]
Up to scalar multiple, $\beta(\FF)$ thus equals the Betti table of a complex whose homology modules have codimension $2$.  Equivalently, $\beta(\FF)$ 
equals, up to scalar multiple, the Betti table of a complex over $\kk[x,y]$ with finite length homology.
\end{example}

\begin{example}
If we think of a pure resolution of a module of finite length as a complex with homology of dimension at most 1, then we recover the fact, first observed by Mats Boij, that a pure table of type $d=(d_0, \dots, d_{n+1})$ can be written as the sum of a pure table of type $(d_0, \dots, d_n)$ and a  pure table of type $(d_1, \dots, d_{n+1})$.  For instance
\[
\begin{bmatrix}
1^{\zp}&-&-&-\\
-&5&5&-\\
-&-&-&1
\end{bmatrix}
=
\begin{bmatrix}
1^{\zp}&-&-&-\\
-&3&2&-\\
-&-&-&-
\end{bmatrix}
+
\begin{bmatrix}
-^{\zp}&-&-&-\\
-&2&3&-\\
-&-&-&1
\end{bmatrix}.
\]
We may interpret this decomposition as follows.  Let $M$ be an $S=\kk[x,y,z]$-module whose Betti table is the diagram on the left. If $\ell$ is a generic linear form (assuming $\kk$ is infinite), then as $S/\ell$-modules, the Betti tables of $\Tor^0_S(M,S/\ell)$ and $\Tor^1(M,S/\ell)$ correspond to the other Betti tables above.
\end{example}

%%%%%%%%%%%%%%%%%%%%%%%%
%%%%%%%%%%%%%%%%%%%%%%%%
\section{Complexes on $\kk[t]$}\label{sec:A}
%%%%%%%%%%%%%%%%%%%%%%%%
%%%%%%%%%%%%%%%%%%%%%%%%
We first prove Theorem~\ref{thm:extremal rays refined} for cones of the form $\BBQ^{\cc}(A)$ (which is the $n=0$ case of the theorem).
Since the output of $\Phi$ is a complex on $A$, this provides a base case for the theorem.  In addition, we provide a halfspace description of each such cone.

Recall that $\UU= \oplus_{i\in \ZZ} \oplus_{j\in \ZZ}\QQ$  denotes the vector space containing the cones $\BBQ^{\cc}(A)$. 

\begin{defn}\label{defn:chi}
Given $\Gbull\in \DD^b(A)$, we define $\chi_{i,j}(\Gbull)=\chi_{i,j}(\beta(\Gbull))$ to be the dot product of the Betti table $\beta(\Gbull)$ with
\begin{equation}\label{eqn:chi}
\chi_{i,j}:=
\left|
\begin{array}{cccccccc}
 & \vdots& &\multicolumn{1}{|c}{\vdots}&& \vdots&&\\
\dots&0&0&\multicolumn{1}{|c}{1}&-1&1&-1&\dots\\
\dots&0&0&\multicolumn{1}{|c}{1}&-1&1&-1&\dots\\
\dots&0&0&\multicolumn{1}{|c}{\mathbf{1}}&-1&1&-1&\dots\\ \cline{4-5}
\dots&0&0&0&0&\multicolumn{1}{|c}{1}&-1&\dots\\
\dots&0&0&0&0&\multicolumn{1}{|c}{1}&-1&\dots\\
& \vdots&&\vdots&& \multicolumn{1}{|c}{\vdots}&&\\
\end{array}
\right|
\end{equation}
where the boldface $1$ corresponds to $\beta_{i,j}$; that is, the boldface $1$ is in column $i$ and row $i-j$. 
\end{defn}
The line that snakes through
the table indicates how $\chi_{i,j}$ separates a Betti table into two regions.  In the upper region, this is simply computing an Euler characteristic, and in the lower region, it is dot product with the zero matrix. 

We define $\DD^b(A)_{\text{tor}}$ as the subcategory of $\DD^b(A)$ consisting of complexes whose homology is torsion.
The usefulness of the functionals $\chi_{i,j}$ comes from their positivity properties.
%generalizing that of \cite{eis-schrey1}:

\begin{prop}\label{lem:chi nonneg}
If $\Gbull \in \DD^b(A)_{\text{tor}}$, then $\chi_{i,j}(\Gbull)\geq 0$.
\end{prop}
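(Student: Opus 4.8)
The plan is to reduce the statement to a concrete assertion about a single torsion $A$-module together with a careful bookkeeping of how the functionals $\chi_{i,j}$ behave under homological shifts and extensions. The first step is to observe that since $A = \kk[t]$ has global dimension $1$, any $\Gbull \in \DD^b(A)_{\text{tor}}$ is quasi-isomorphic to a minimal free complex, and its homology modules $\HH_r \Gbull$ are each finite-length torsion modules, i.e.\ finite direct sums of modules of the form $A/(t^a)$ up to shift. Because $\beta(\Gbull)$ is additive over the homology (each minimal free complex over $A$ splits, up to quasi-isomorphism, as a direct sum of shifted two-term resolutions $[A(-j) \xleftarrow{t^a} A(-j-a)]$ of cyclic torsion modules placed in various homological degrees), and because $\chi_{i,j}$ is a linear functional, it suffices to check $\chi_{i,j}(\Gbull) \geq 0$ when $\Gbull$ is a single such two-term complex $[A(-j_0)^{\circ\text{-shifted to degree }p} \xleftarrow{t^a} A(-j_0-a)]$ in homological degrees $p, p+1$. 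So the whole proposition collapses to an explicit finite computation.

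Next I would carry out that computation directly against the pattern in \eqref{eqn:chi}. The Betti table of such a two-term complex has exactly two nonzero entries: a $1$ in column $p$, row $p - j_0$, and a $1$ in column $p+1$, row $(p+1) - (j_0 + a)$. Pairing with $\chi_{i,j}$ amounts to reading off the signs assigned to these two cells by the snaking region (which are $+1$, $-1$, $+1$, $-1$, \dots along each row to the right of a vertical cut, and $0$ to the left of it, with all rows below a certain horizontal cut being entirely $0$). The key numerical point is that the two cells lie in columns differing by $1$ and in rows differing by $a - 1 \geq 0$, and one then checks case by case — according to whether each of the two cells falls in the "staircase" (Euler-characteristic) region, exactly on the step, or in the zero region relative to the reference position $(i, i-j)$ — that the resulting value of $\chi_{i,j}(\Gbull)$ is always $0$ or $1$, never $-1$. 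The alternating $\pm 1$ sign pattern is precisely engineered so that the contribution of the "target" cell in column $p+1$ can never be a strictly negative net contribution without the "source" cell in column $p$ also contributing with a sign that cancels or dominates it.

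The step I expect to be the main obstacle is making the last case analysis genuinely airtight rather than merely plausible: one has to be careful about the boundary of the snaking region — the vertical cut sits between columns $i$ and $i+1$ in the top rows but shifts one column to the right below row $i - j$, and the horizontal cut that zeroes out the bottom of the table — so that the two cells can land on opposite sides of a cut, or straddle the "step" of the staircase, and every such configuration needs to be confirmed to give a nonnegative answer. A clean way to organize this is to fix $(i,j)$ and write $\chi_{i,j}(\Gbull) = \chi_{i,j}(H_p) - \chi_{i,j}(H_{p+1}[?])$-type telescoping, or better, to note that $\chi_{i,j}$ restricted to the relevant region literally computes a truncated Euler characteristic $\sum_{p \geq i,\ j' \leq j}(-1)^{\dots}\beta_{p,j'}$-style sum, which for an exact-except-at-one-spot torsion complex is forced to be the alternating sum of dimensions of a truncated piece of a complex with controlled homology — and such truncated Euler characteristics of a complex whose homology is concentrated appropriately are automatically nonnegative. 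I would ultimately phrase the argument in that last form, as it both does the bookkeeping and explains \emph{why} the alternating-sign template in \eqref{eqn:chi} was the right one to write down.
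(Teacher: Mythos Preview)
Your proposal is correct and follows essentially the same approach as the paper: reduce via global dimension $1$ to shifted two-term resolutions $A(-p)\leftarrow A(-p-q)$ of cyclic torsion modules, then observe that $\chi_{i,j}$ of such a complex is $0$ or $1$. The paper simply declares the last step ``obvious'' rather than working through the case analysis you outline, so your version is if anything more thorough than what appears there.
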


\begin{proof} Because $A$ has global dimension 1, every minimal free complex
over $A$ is isomorphic to the direct sum of the resolutions of its homology modules, 
and any torsion $A$-module is a direct sum of modules of the form
 $A(-p)/t^q$. Thus it suffices to compute $\chi_{i,j}\Gbull$, where
$
\Gbull
$
is, up to a homological shift, the two term complex
$A(-p)\leftarrow A(-p-q)$.  It is then straightforward to verify that $\chi_{i,j}\Gbull$ is 0 or 1,
depending on the values of $i,j,p,q$.
\end{proof}

By composing the $\chi_{i,j}$ with $\Phi$, we obtain a positivity result generalizing those in \cite[\S4]{eis-schrey1}.

\begin{cor}\label{thm:categorified}\label{thm:categorified:1}  Let $\FF\in \DD^b(S)$ and $\cE\in \DD^b(\PP^n)$. If  $\codim(\FF)+\codim(\cE)\geq n+1$, then
\[
\chi_{i,j}(\FF\cdot \cE)\geq 0
\]
\end{cor}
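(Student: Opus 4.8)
The plan is to reduce the inequality to a geometric situation controlled by Proposition~\ref{prop:exact} and then read off positivity from Proposition~\ref{lem:chi nonneg}. By Theorem~\ref{thm:betti numbers of pairing}, $\chi_{i,j}(\FF\cdot\cE)$ depends only on $\beta(\FF)$ and on the cohomology table $\gamma(\cE)$, and it depends linearly on $\gamma(\cE)$. Writing $\gamma(\cE)$ as a nonnegative combination of cohomology tables of supernatural sheaves \cite[Theorem~0.1]{eis-schrey2} --- where the supernatural sheaves appearing in the decomposition have dimension at most $\dim\cE$, hence codimension at least $\codim(\cE)\geq n+1-\codim(\FF)$ --- it suffices to prove $\chi_{i,j}(\FF\cdot\cE)\geq 0$ in the case that $\cE$ is a supernatural sheaf of some dimension $s\leq\codim(\FF)-1$.

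For such an $\cE$ I would replace it by a sheaf with the same cohomology table, supported on a \emph{generic} linear subspace. Fix a linear embedding $\iota\colon L\hookrightarrow\PP^n$ with $L\cong\PP^s$ and let $\cE_0$ be a supernatural sheaf on $\PP^s$ with the same root sequence as $\cE$, which exists by \cite[Theorem~0.4]{eis-schrey1}. Since $\iota$ is a closed immersion with $\iota^*\cO_{\PP^n}(1)\cong\cO_{\PP^s}(1)$, one has $H^i\bigl(\PP^n,(\iota_*\cE_0)(d)\bigr)\cong H^i\bigl(\PP^s,\cE_0(d)\bigr)$ for all $i,d$, so $\iota_*\cE_0$ is supernatural on $\PP^n$ with the same root sequence as $\cE$; as the cohomology table of a supernatural sheaf is determined by its root sequence, $\gamma(\iota_*\cE_0)=\gamma(\cE)$ and hence $\chi_{i,j}(\FF\cdot\cE)=\chi_{i,j}(\FF\cdot\iota_*\cE_0)$. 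We are thus free to work with $\iota_*\cE_0$ for $L$ a generic $s$-plane.

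Now the geometric step. Let $Z\subseteq\PP^n$ be the union of the supports of the homology sheaves of $\widetilde\FF$; since $\FF$ has homology of codimension $\geq\codim(\FF)$ we have $\dim Z\leq n-\codim(\FF)$, while $\dim L=s\leq\codim(\FF)-1$, so $\dim L+\dim Z\leq n-1$ and a generic $s$-plane $L$ misses $Z$. For such $L$, the bounded complex of vector bundles $\widetilde\FF$ is exact on an open neighborhood of $L$; a bounded exact complex of vector bundles stays exact under any base change, so $\widetilde\FF|_L$ is exact, whence by the projection formula $\widetilde\FF\otimes\iota_*\cE_0\cong\iota_*\bigl(\widetilde\FF|_L\otimes\cE_0\bigr)$ is exact. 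By Proposition~\ref{prop:exact}, $\Phi(\FF,\iota_*\cE_0)$ then has finite length homology over $A$, so it lies in $\DD^b(A)_{\text{tor}}$ and Proposition~\ref{lem:chi nonneg} gives $\chi_{i,j}(\FF\cdot\iota_*\cE_0)\geq 0$. Together with the reductions above, this proves $\chi_{i,j}(\FF\cdot\cE)\geq 0$ whenever $\codim(\FF)+\codim(\cE)\geq n+1$.

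The step I expect to demand the most care is the first reduction: one must be sure that the supernatural sheaves occurring in the Eisenbud--Schreyer decomposition of $\gamma(\cE)$ all have codimension $\geq\codim(\cE)$, since it is exactly this bound that makes the hypothesis $\codim(\FF)+\codim(\cE)\geq n+1$ usable and lets the plane $L$ be taken of dimension $\leq\codim(\FF)-1$. The remaining ingredients are routine: linearity of $\chi_{i,j}$ in the cohomology table is Theorem~\ref{thm:betti numbers of pairing}, the cohomology table of $\iota_*\cE_0$ is computed by the projection formula for a closed immersion, the exactness of $\widetilde\FF|_L$ uses that a bounded exact complex of finite free modules over a local ring is contractible, and the disjointness of a generic $s$-plane from $Z$ is a standard dimension count.
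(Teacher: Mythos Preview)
Your proof takes a genuinely different route from the paper's. The paper argues in one stroke: since $\chi_{i,j}(\FF\cdot\cE)$ depends only on $\beta(\FF)$ and $\gamma(\cE)$, one may assume $\kk$ infinite and replace $\cE$ by a general $\GL_{n+1}(\kk)$-translate; the Kleiman--Bertini theorem of Miller--Speyer \cite{miller-speyer} then forces $\widetilde\FF\otimes\cE$ to be exact, and Propositions~\ref{prop:exact} and~\ref{lem:chi nonneg} finish. You instead pass through the supernatural decomposition of \cite{eis-schrey2} and realize each supernatural piece on a generic linear subspace. This trades Miller--Speyer for the full strength of \cite[Theorem~0.1]{eis-schrey2} and replaces group-theoretic genericity by a bare dimension count; the paper's route is shorter and more direct.

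There is, however, a real gap when $\cE$ is a genuine complex. The statement allows $\cE\in\DD^b(\PP^n)$, but \cite[Theorem~0.1]{eis-schrey2} decomposes cohomology tables of \emph{coherent sheaves}; the hypercohomology table of a complex can have nonzero $\gamma_{i,j}$ with $i$ outside $[0,n]$, and then no nonnegative combination of supernatural tables reproduces it. The paper's approach handles complexes: for a general translate each pair $(\HH_p\widetilde\FF,\HH_q\cE)$ is homologically transverse, and the spectral sequence written out in the proof of Lemma~\ref{lem:refined positivity} gives $\widetilde\FF\otimes\cE$ exact. Your argument is correct if you restrict to sheaves $\cE$, which is all the applications require.

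On the point you flag as delicate, the dimension bound is fine: a sheaf $\cE$ of dimension $d$ has $\gamma_{i,j}(\cE)=0$ for $i>d$, while any supernatural sheaf of dimension $s>d$ has $\gamma_{s,j}\neq 0$ for $j\ll 0$; since the decomposition algorithm of \cite{eis-schrey2} only subtracts nonnegative multiples, no such summand can occur. (One minor quibble: the cohomology table of a supernatural sheaf is determined by its root sequence only up to a positive scalar, not on the nose, but this is harmless for the nonnegativity you need.)
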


\begin{proof}
The number $\chi_{i,j}(\FF\cdot \cE)$ only depends on the Betti table of $\FF\cdot \cE$, which by Theorem~\ref{thm:Phi}\eqref{thm:Phi:1} only depends on $\beta(\FF)$ and $\gamma(\cE)$.  We may thus assume that $\kk$ is an infinite field, and we may replace $\cE$ by a general $\GL_{n+1}$ translate, since this does not affect the cohomology table of $\cE$.  By~\cite[Theorem, p.\ 335]{miller-speyer}, we  conclude $\widetilde{\FF}\otimes \cE$ is exact.  Hence, by Theorem~\ref{thm:Phi}\eqref{thm:Phi:2}, it follows that $\FF\cdot \cE$ lies in $\DD^b(A)_{\text{tor}}$.  Thus Proposition~\ref{lem:chi nonneg} yields the desired nonnegativity.
\end{proof}

\begin{example}\label{ex:sup2}
On $\PP^2$, let $\cE$ be a supernatural bundle of type $(1,-3)$ and rank $2$.  
We consider the functional $\beta(\FF)\mapsto \chi_{0,0}(\FF\cdot \cE)$.  This functional is given by the dot product of $\beta(\FF)$ with
\[
\chi_{0,0}(-\cdot \cE)=
\left|
\begin{array}{ccccccc}
&\multicolumn{1}{|c}{\vdots}&\vdots&\vdots&\vdots&&\\
\dots&\multicolumn{1}{|c}{12}&-5&0&3&\dots\\
\dots&\multicolumn{1}{|c}{5}&0&-3&4&\dots\\ \cline{2-2}
\dots&{0}&\multicolumn{1}{|c}{3}&-4&3&\dots\\ \cline{3-5}
\dots&0^\zp&0&0&{0}&\multicolumn{1}{|c}{\dots}\\
\dots&0&0&0&0&\multicolumn{1}{|c}{\dots}\\
\dots&0&0&0&0&\multicolumn{1}{|c}{\dots}\\
\dots&0&0&0&0&\multicolumn{1}{|c}{\dots}\\
&\vdots&\vdots&\vdots&\vdots&\multicolumn{1}{|c}{}&\\
\end{array}
\right|.
\]
The line that snakes through the table corresponds to the dividing line in the definition of $\chi_{0,0}$. The shape of the line has changed, due to the homological shifts introduced by the cohomology table of $\cE$.  Note that this recovers the facet equation $\delta$ from~\cite[\S3]{eis-schrey-icm}.
\end{example}

We will 
%use the truncated Euler characteristic functionals $\chi_{i,j}$ from Definition~\ref{defn:chi} and the 
also use the total Euler characteristic functional $\chi(\FF):=(\FF\mapsto \sum_{i,j}(-1)^i \beta_{i,j}(\FF))$.  

\begin{cor}\label{cor:dualconeA refined}
Let $\cc = (\dots,c_{i},\dots) = (\dots,\nothing,0\dots,0,1\dots,1,\infty, \dots)$ be a codimension sequence for $A$ and let $u\in \UU$.
A point $u\in \UU$ lies in the cone $\BBQ^{\cc}(A)$ if and only if the following equalities and inequalities hold:
\begin{enumerate}
	\item $\beta_{i,j}(u)=0$ for all $i,j$ such that $c_i=\nothing$.
	\item $\beta_{i,j}(u)\geq 0$ for all $i,j$.
	\item  $\chi_{i,j}(u)\geq 0$ for all $i,j$ such that $c_{i+1}\geq 1$.
	\item If there does not exist $i$ such that $c_i=0$, then $\chi(u)=0$.
\end{enumerate}
\end{cor}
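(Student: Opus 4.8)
The plan is to prove this by a direct combinatorial analysis, using the structural fact already recorded in the proof of Proposition~\ref{lem:chi nonneg}: over $A=\kk[t]$, which has global dimension $1$, every minimal free complex splits as a direct sum of shifted free resolutions of its homology modules, and every torsion module is a direct sum of cyclic ones $A(-p)/t^q$. First I would establish \emph{necessity} of (1)--(4). Condition (1) is immediate from the definition of compatibility with $\cc$ (the requirement $\FF_i=0$ when $c_i=\nothing$), condition (2) is the definition of a Betti number, condition (3) is Proposition~\ref{lem:chi nonneg} applied to the subcomplex in homological degrees $\geq i$ where $c_i\geq 1$ forces the homology there to be torsion (so that truncation lands in $\DD^b(A)_{\text{tor}}$), and condition (4) is the statement that if all $c_i\geq 1$ then $\FF$ is everywhere generically exact, hence $\chi(\FF)=\sum_i(-1)^i\operatorname{rank}\FF_i=0$. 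The only subtlety here is checking that the truncation used for (3) is legitimate: the stupid truncation $\FF_{\geq i}$ of a minimal free complex is again a minimal free complex, its homology in degrees $>i$ agrees with that of $\FF$, and in degree $i$ it is a submodule of $\HH_i\FF$ hence also of codimension $\geq c_i\geq 1$, i.e.\ torsion; and $\chi_{i,j}$ only reads the columns $\geq i$ of the Betti table, so $\chi_{i,j}(\FF)=\chi_{i,j}(\FF_{\geq i})$.

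For \emph{sufficiency} — that any $u$ satisfying (1)--(4) actually lies in $\BBQ^{\cc}(A)$ — I would argue by explicit construction, peeling off pure summands from upper-right to lower-left exactly as in the decomposition algorithm of Example~\ref{example:greedy decomposition}, but now in the very simple $n=0$ setting. Here the pure complexes of codimension $0$ are the shifted copies of free modules $A(-p)[k]$ (contributing a single entry to the Betti table), and the pure complexes of codimension $1$ are the two-term resolutions $[A(-p)\gets A(-p-q)]$, possibly shifted, whose Betti table has entries $1$ in positions $\beta_{k,p}$ and $\beta_{k+1,p+q}$. Starting from the rightmost nonzero column and the topmost nonzero row, the conditions (2) and (3) guarantee that at each stage there is a compatible pure table of one of these two shapes that can be subtracted while keeping all entries nonnegative and all the $\chi$-functionals nonnegative; condition (1) ensures we never need a summand supported in a forbidden homological degree, and condition (4) ensures that when $c_i\geq 1$ everywhere we exhaust the table using only codimension-$1$ pieces (no leftover rank). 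One runs out of codimension-$1$ degree sequences compatible with $\cc$ precisely at the homological positions where $c_i=0$, at which point one switches to codimension-$0$ (free-module) summands — this is legitimate exactly because $c_i=0$ there.

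The main obstacle I expect is making the greedy peeling-off rigorous in the presence of the two different sizes of building block and proving termination: one must show that after each subtraction the "complexity" (for instance, the number of nonzero entries weighted by position) strictly decreases, and that the invariants (2)--(4) are preserved by each step — in particular that subtracting a codimension-$1$ pure table does not turn some $\chi_{i',j'}(u)$ negative, which requires checking how the characteristic-functional values of the pure two-term complexes interact. An alternative, cleaner route that avoids re-running the algorithm is to invoke the $n=0$ case of Theorem~\ref{thm:extremal rays refined} directly: that theorem (whose proof in \S\ref{sec:A} is precisely where this corollary lives, so one must be careful about circularity) identifies the extremal rays of $\BBQ^{\cc}(A)$ with the compatible shifted degree sequences, and a standard duality-of-cones argument then shows the dual cone is cut out by the nonnegative functionals supported on those rays — which one checks by hand are exactly $\{\beta_{i,j}\}$, $\{\chi_{i,j}: c_i\geq 1\}$, together with $\pm\chi$ when no $c_i=0$. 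I would present the argument in whichever of these two forms is logically compatible with the order of results in \S\ref{sec:A}; since the $\chi_{i,j}$ are needed to \emph{prove} the general decomposition theorem, the self-contained greedy construction is the safer choice, and termination is the step to watch.
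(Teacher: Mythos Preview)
Your sufficiency argument (the greedy peeling-off, inducting on the sum of entries) is essentially the paper's approach, and your identification of the termination/invariant-preservation step as the place to be careful is accurate.

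There is, however, a genuine error in your necessity argument for condition~(3). You claim that for the stupid truncation $\FF_{\geq i}$, the homology in degree $i$ ``is a submodule of $\HH_i\FF$''. This is false: for $\FF_{\geq i}=[0\gets \FF_i\gets \FF_{i+1}\gets\cdots]$ one has $\HH_i(\FF_{\geq i})=\FF_i/\im(\partial_{i+1})$, which sits in a short exact sequence
\[
0\to \HH_i(\FF)\to \HH_i(\FF_{\geq i})\to \im(\partial_i)\to 0,
\]
so $\HH_i(\FF)$ is a \emph{submodule} of $\HH_i(\FF_{\geq i})$, not conversely, and the quotient $\im(\partial_i)\subseteq\FF_{i-1}$ is torsion-free. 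Thus $\FF_{\geq i}$ generally fails to lie in $\DD^b(A)_{\text{tor}}$ (take $\FF=[A\xleftarrow{t}A(-1)]$ and $i=1$), and you cannot invoke Proposition~\ref{lem:chi nonneg} on it.

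The fix is already in your hands: you open by recalling that over $A$ every minimal free complex splits as a direct sum of shifted resolutions of its homology modules. Use that decomposition directly for~(3). Each two-term summand is in $\DD^b(A)_{\text{tor}}$, so $\chi_{i,j}\geq 0$ on it by Proposition~\ref{lem:chi nonneg}. Each rank-one free summand $A(-p)[k]$ contributes nonzero rank to $\HH_k(\FF)$, forcing $c_k=0$; since $\cc$ is nondecreasing and $c_i\geq 1$, this gives $k<i$, and since $\chi_{i,j}$ vanishes on columns $<i$ it vanishes on that summand. This is exactly how the paper argues.
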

\begin{proof}[Proof of Theorem~\ref{thm:extremal rays refined} for $\BBQ^{\cc}(A)$ and of Corollary~\ref{cor:dualconeA refined}]
Since $A$ has global dimension $1$, any complex $\Gbull\in \DD^b(A)$ is quasi-isomorphic to its homology.  We may thus write $\Gbull$ as a direct sum of shifted indecomposable modules.  Recall that the indecomposable graded $A$-modules are twists of $A$ or $A/(x^p)$ for $p\geq 0$.  Hence, each individual module appearing in that decomposition corresponds to the homology of a shifted pure resolution, and conversely each degree sequence over $A$ corresponds to a unique shifted indecomposable module.  The extremal ray description of $\BBQ^{\cc}(A)$ then immediately follows.

We next check that each functional in (1) and (4) vanishes on $\BBQ^{\cc}(A)$.  For (1), if $c_i=\nothing$ this simply follows from the fact that $\Gbull$ is compatible with $\cc$.  For (4), if there does not exist $i$ such that $c_i=0$, then $\Gbull$ is compatible with $\cc$ only if $\Gbull$ has finite length homology, and hence $\chi(\Gbull)=0$.  

We now claim that the other functionals in Corollary~\ref{cor:dualconeA refined} are nonnegative.  The functionals from (2) are obviously nonnegative, so it suffices to check that $\chi_{i,j}(\Gbull)\geq 0$ when $\Gbull$ is compatible with $\cc$ and $c_i\geq 1$.  By the extremal ray description, it suffices to consider the case when $\Gbull$ is a pure resolution.  If $\Gbull$ resolves a finite length module, then Proposition~\ref{lem:chi nonneg} implies that $\chi_{i,j}(\Gbull)\geq 0$.  On the other hand, if $\Gbull=A(-p)[q]$ then, since $c_{i+1}=1$, the homology of $\Gbull$ lies entirely in homological degree $\leq i$; so $q\leq i$ and hence $\chi_{i,j}(\Gbull)\geq 0$.

To obtain the results about the simplicial structure of $\BBQ^{\cc}(A)$, we must show that if a point $u\in \UU$ satisfies the inequalities in Corollary~\ref{cor:dualconeA refined}, then we may write $u$ uniquely as a sum of pure tables whose degree sequences form a chain.  It suffices to consider points $u\in \UU$ whose entries are all integral.  Since the entries of $u$ must be nonnegative, we may induct on the sum of all of the entries of $u$.  When all entries of $u$ are zero, then $u$ is the empty sum of pure diagrams, and this provides our base case.

Otherwise, $u$ has some nonzero entry.  Our goal is to produce some new diagram $u'$ from $u$ on which we can apply the induction hypothesis.  Choose $(s,t)$ so that $u_{s,t}$ is the top nonzero entry in the rightmost nonzero column of $u$.
If $c_s=0$, then $\beta(A(-t)[-s])$ is compatible with $\cc$, and we set $u'=u-\beta(A(-t)[-s])$.  On the other hand, if $c_s>0$, then we choose $r$ so that $u_{s-1,r}$ is the top nonzero entry in column $s-1$.  We claim that $r<t$, so that $u$ has the form:
\[
u=
\left[
\begin{array}{ccccccc}
 & \vdots& \multicolumn{1}{|c}{\vdots}&\vdots& \vdots&&\\
\dots&u_{s-2,r-2}&\multicolumn{1}{|c}{0}&0&0&\dots\\
\dots&u_{s-2,r-1}&\multicolumn{1}{|c}{u_{s-1,r}}&0&0&\dots\\
 & \vdots& \multicolumn{1}{|c}{\vdots}&\vdots& \vdots&&\\
\dots&u_{s-2,t-3}&\multicolumn{1}{|c}{u_{s-1,t-2}}&0&0&\dots\\
\dots&u_{s-2,t-2}&\multicolumn{1}{|c}{u_{s-1,t-1}}&u_{s,t}&0&\dots\\ \cline{3-4}
\dots&u_{s-2,t-1}&u_{s-1,t}&u_{a,t+1}&\multicolumn{1}{|c}{0}&\dots\\
& \vdots&\vdots&\vdots& \multicolumn{1}{|c}{\vdots}&&\\
\end{array}\right].
\]
Since $c_s>0$ we have $\chi_{s-1,t-1}(u)\geq 0$, and it follows that $r<t$ as claimed.  When $c_s>0$, we thus set
\[
u':=u-\beta(A(-r)[-s+1]/t^{t-r}).
\]

Since $u$ satisfies the inequalities in Corollary~\ref{cor:dualconeA refined}, one may verify directly that $u'$ (in either of the cases above) also satisfies these inequalities.  Hence, the induction hypothesis guarantees that we can write $u'$ uniquely as a sum of pure tables whose degree sequences form a chain.  Further, the degree sequence corresponding to $u-u'$ is less than or equal to any degree sequence that could possibly arise in the decomposition of $u'$, and hence we can use the decomposition of $u'$ to conclude that $u$ decomposes uniquely as a sum of pure tables whose degree sequences form a chain.
\end{proof}

%%%%%%%%%%%%%%%%%%%%%%%%%%%%%%%%%%%%
%%%%%%%%%%%%%%%%%%%%%%%%%%%%%%%%%%%%
\section{Proof of the Decomposition Theorem}\label{sec:refined proof}\label{sec:general case}
%\subsection{Resolutions and refined Boij--S\"oderberg theory for $S$}\label{sec:refined}
%%%%%%%%%%%%%%%%%%%%%%%%%%%%%%%%%%%%
%%%%%%%%%%%%%%%%%%%%%%%%%%%%%%%%%%%%

We begin with a lemma which is like a refined version of Theorem~\ref{thm:Phi}\eqref{thm:Phi:2}.  

\begin{lemma}\label{lem:refined positivity}
Fix the codimension sequence $\cc'$ where:
\[
\cc'_i:=\begin{cases}
1&\text{ if }i\geq 1\\% \ (\text{resp. } i\leq -k+1)\\
0&\text{ if } i<0. % \ (\text{resp. } i> -k+1).
\end{cases}
\]
Let $\cc$ be any codimension sequence and write $k:=\cc_1$.  The  pairing $\Phi$ induces a map of cones:
\[
\BBQ^{\cc}(S)\times \CQ^{n+1-k}(\PP^n)\to \BBQ^{\cc'}(A)
\]
given by $(\beta(\FF), \gamma(\cE))\mapsto \beta(\FF\cdot \cE)$.
\end{lemma}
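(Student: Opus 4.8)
The plan is to verify that $\Phi$ sends the relevant pairs into $\BBQ^{\cc'}(A)$ by checking the three defining conditions of that cone as described in Corollary~\ref{cor:dualconeA refined}: nonnegativity of Betti numbers, vanishing of $\beta_{i,j}$ in positions forbidden by $\cc'$, and nonnegativity of the functionals $\chi_{i,j}$ for $i$ with $\cc'_i \geq 1$. (Note that the case (4) of that corollary does not apply here, since $\cc'$ takes the value $0$, namely $\cc'_{-1}=0$.) Since by Theorem~\ref{thm:betti numbers of pairing} the Betti table $\beta(\FF\cdot\cE)$ depends only on $\beta(\FF)$ and $\gamma(\cE)$, and since both inputs range over cones, it suffices to check these conditions on generators $\FF$ compatible with $\cc$ and $\cE$ of codimension $\geq n+1-k$, and then extend by linearity.

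First I would observe that Betti numbers are automatically nonnegative, so condition (2) is free. For the vanishing condition (1): $\cc'_i = \nothing$ never happens, so there is actually nothing to check there; what we really need is the implicit claim that $\FF\cdot\cE$ can be represented by a complex supported in the homological degrees allowed by $\cc'$, i.e.\ that it has no homology or free summands forced into positions incompatible with $\cc'$. But more to the point, the substantive content is condition (3), the nonnegativity of $\chi_{i,j}(\FF\cdot\cE)$ for all $i$. This is exactly Corollary~\ref{thm:categorified}: if $\FF$ is compatible with $\cc$ then $\codim(\FF)\geq \cc_0 = k$ (using that $\cc$ is nondecreasing and that we are free to reduce to the relevant homological range), and $\codim(\cE)\geq n+1-k$, so $\codim(\FF)+\codim(\cE)\geq n+1$, whence $\chi_{i,j}(\FF\cdot\cE)\geq 0$ for all $i,j$ by that corollary.

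The remaining point — and the one I expect to be the main obstacle — is pinning down precisely \emph{why} $\beta(\FF\cdot\cE)$ lies in $\BBQ^{\cc'}(A)$ rather than merely satisfying the $\chi$-inequalities. The cleanest route is to invoke the halfspace description: by the already-proved case of Theorem~\ref{thm:extremal rays refined} for $A$ together with Corollary~\ref{cor:dualconeA refined}, a point $u\in\UU$ lies in $\BBQ^{\cc'}(A)$ precisely when it satisfies the listed (in)equalities, and we have just checked all of them. One must be slightly careful that the $\chi_{i,j}$ appearing in Corollary~\ref{cor:dualconeA refined} for the sequence $\cc'$ are required to be nonnegative only for $i$ with $\cc'_i\geq 1$, i.e.\ $i\geq 0$; but Corollary~\ref{thm:categorified} gives nonnegativity for \emph{all} $i,j$, which is stronger than needed, so this causes no trouble. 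Finally I would note the bookkeeping subtlety that $\codim$ of a complex is defined via the direct sum of its homology modules, and that ``compatible with $\cc$'' bounds $\codim \HH_i\FF \geq \cc_i$ with $\cc$ nondecreasing, so $\codim(\FF) = \min_i \codim\HH_i\FF \geq \min_i \cc_i$; one wants this to be $\geq k = \cc_0$, which holds after discarding the homological positions where $\cc_i = \nothing$ (forced to be zero) and using monotonicity — this is the one place where a short argument, rather than a pure citation, is needed, and I would spell it out before concluding that the map of cones is well defined.
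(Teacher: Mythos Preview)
There is a genuine gap. Your reduction to Corollary~\ref{thm:categorified} requires $\codim(\FF)+\codim(\cE)\geq n+1$, hence $\codim(\FF)\geq k$. But compatibility of $\FF$ with $\cc$ only gives $\codim \HH_i\FF\geq \cc_i$, and since $\cc$ is \emph{nondecreasing} we have $\cc_i\leq \cc_0=k$ for $i<0$, not $\cc_i\geq k$. So $\codim(\FF)=\min_i\codim\HH_i\FF$ may well be strictly less than $k$ (for instance, take $\cc=(\dots,0,0,k^{\zp},k,\dots)$ and a complex with codimension-$0$ homology in negative degrees). Your ``short argument'' based on monotonicity therefore proves the wrong inequality, and Corollary~\ref{thm:categorified} does not apply. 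Since that corollary is the entire engine behind your verification of condition~(3), the argument collapses.

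This is exactly why the paper calls the lemma a \emph{refined} version of Theorem~\ref{thm:Phi}\eqref{thm:Phi:2}: the conclusion $\beta(\FF\cdot\cE)\in\BBQ^{\cc'}(A)$ is genuinely weaker than $\beta(\FF\cdot\cE)\in\BBQ^{1}(A)$, and the extra slack is needed precisely because $\codim(\FF)$ can drop below $k$. The paper's proof does not go through the halfspace description at all. Instead it shows directly that $\FF\cdot\cE$ is compatible with $\cc'$ as a complex: after passing to an infinite field and replacing $\cE$ by a general $\GL_{n+1}$-translate (Miller--Speyer), one gets $\Tor_q(\HH_r\widetilde{\FF},\cE)=0$ for $q>0$ and $(\HH_r\widetilde{\FF})\otimes\cE=0$ for $r\geq 0$ (this last uses only $\cc_r\geq k$ for $r\geq 0$). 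A spectral sequence then gives $\widetilde{\FF}\otimes\cE$ exact in homological degrees $\geq 0$, and flat base change over $\AA^1\setminus\{0\}$ plus the hypercohomology spectral sequence shows $\Phi(\FF,\cE)$ has torsion homology in degrees $\geq 0$, i.e.\ is compatible with $\cc'$. The essential idea you are missing is that the codimension hypothesis is applied \emph{degree by degree} to the homology modules in nonnegative positions, not globally to $\FF$.
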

\begin{proof}
Let $\FF\in \DD^b(S)$ be compatible with $\cc$ and let $\cE\in \DD^{b}(\PP^n)$ have codimension $\geq n+1-k$.  By Theorem~\ref{thm:Phi}\eqref{thm:Phi:1}, the Betti table of $\FF\cdot \cE$ only depends on the Betti table of $\FF$ and on the cohomology table of $\cE$.  Since extending the ground field does not affect any of these tables, we may assume $\kk$ is infinite.  Fixing $r$, we may replace $\cE$ by a general $\GL_{n+1}(\kk)$-translate and apply~\cite[Theorem, p.\ 335]{miller-speyer} to obtain that $\HH_r \widetilde{\FF}$ and $\cE$ are homologically transverse, that is:
\begin{itemize}
	\item $\Tor_i(\HH_r\widetilde{\FF},\cE)=0 \text{ for } i>0$, and
	\item $\codim ((\HH_r\widetilde{\FF})\otimes \cE) \geq \min\{n+1, \codim \HH_r\widetilde{\FF}+\codim \cE\}.$
\end{itemize}
In fact, since $\FF$ is bounded, a general translate of $\cE$ will be homologically transverse to all of the $\HH_r\widetilde{\FF}$ simultaneously.  
Since $c_r\geq k$ for all $r\geq 1$, transversality implies that $(\HH_r\widetilde{\FF})\otimes \cE=0$ whenever $r\geq 1$.

We now consider the spectral sequence
\[
E^2_{r,q}=\Tor_q(\HH_r\widetilde{\FF},\cE)\Rightarrow \Tor_{r+q}(\widetilde{\FF},\cE).
\]
The conclusion of the previous paragraph shows that $E^2_{r,q}=0$ whenever $r+q\geq 0$, and we thus conclude that 
$\widetilde{\FF}\otimes \cE$ is exact in homological degrees $\geq 1$.

By flat base change with respect to $U:=\AA^1\setminus \{0\} \subseteq \AA^1$, the complex $\Phi(\FF,\cE)\otimes_{\cO_{\AA^1}} \cO_{U}$ is quasi-isomorphic to $Rp_{2*}(\widetilde{\FF}\otimes \cE)\otimes_{\kk} \cO_{U}$.  It follows that $\Phi(\FF,\cE)$ is compatible with $\cc'$ if and only if $R^sp_{2*}(\widetilde{\FF}\otimes \cE)=0$ for $s\geq 1$.  This in turn follows immediately from the above computation and from the hypercohomology spectral sequence
$
E_2^{r,q}=R^{r}p_{2*}(\HH_{-q}(\widetilde{\FF}\otimes \cE))\Rightarrow R^{q+r}p_{2*}(\widetilde{\FF}\otimes \cE).
$
\end{proof}

\begin{remark}
Fix $\FF$ and $\cE$ such that $\codim(\FF)+\codim(\cE)\geq n+1$.  If $\FF$ and $\cE$ are not transverse, then it may happen that $\FF\cdot \cE$ fails to have finite length homology. However, the lemma implies that $\FF\cdot \cE$ has the same Betti table as a complex of $A$-modules with finite length homology.  
\end{remark}

The following lemma will be used repeatedly in our proof of Theorem~\ref{thm:extremal rays refined}.
\begin{lemma}\label{lem:pure and supernatural}
Let $\dd$ be the codimension $n+1$ degree sequence corresponding to a pure complex $\FF_\dd$, and let $f=(f_1>\dots >f_s)$ be the root sequence corresponding to a supernatural sheaf $\cE_f$.  Then
$
\beta_{i,j}(\FF_\dd\cdot \cE_f)\ne 0
$
if and only if $d_\ell=j$ where $f_{\ell-i}>-d_{\ell}>f_{\ell-i+1}$.\end{lemma}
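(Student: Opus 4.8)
The plan is to compute $\beta_{i,j}(\FF_d \cdot \cE_f)$ directly from the formula of Theorem~\ref{thm:betti numbers of pairing}, using the special structure of pure complexes and supernatural sheaves. Recall that the formula gives
\[
\beta_{i,j}(\FF_d \cdot \cE_f) = \sum_{p - q = i} \beta_{p,j}(\FF_d)\,\gamma_{q,-j}(\cE_f).
\]
Since $\FF_d$ is pure of type $\dd$, the term $\beta_{p,j}(\FF_d)$ is nonzero precisely when $j = d_p$, and for each admissible $j$ there is at most one such $p$ (namely $p = \ell$ where $d_\ell = j$). So the sum collapses to a single potential term: $\beta_{i,j}(\FF_d \cdot \cE_f) \neq 0$ forces $j = d_\ell$ for some $\ell$ with $d_\ell \in \ZZ$, and then the value is $\beta_{\ell, d_\ell}(\FF_d)\,\gamma_{\ell - i, -d_\ell}(\cE_f)$. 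Since $\beta_{\ell,d_\ell}(\FF_d) \neq 0$ always (a pure resolution of codimension $n+1$ has a nonzero Betti number in every homological degree $0, \dots, n+1$), the condition reduces to $\gamma_{\ell-i, -d_\ell}(\cE_f) \neq 0$.

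The second step is to unravel when $\gamma_{q, -d_\ell}(\cE_f) = h^q(\cE_f(-d_\ell)) \neq 0$ for a supernatural sheaf of type $f = (f_1 > \cdots > f_s)$. Here I would invoke the defining cohomological properties of supernatural sheaves together with the standard description of their cohomology tables from \cite{eis-schrey1}: for a supernatural sheaf of dimension $s$, given an integer $m$, the cohomology $H^q(\cE_f(m))$ is nonzero for exactly one value of $q$, and that value is determined by how many of the roots $f_1, \dots, f_s$ lie above $-m$ — precisely, $h^q(\cE_f(m)) \neq 0$ iff $f_q > -m > f_{q+1}$ (with the conventions $f_0 = +\infty$, $f_{s+1} = -\infty$, and the understanding that $q$ ranges over $0, \dots, s$). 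Setting $m = -d_\ell$ and $q = \ell - i$, this says $\gamma_{\ell-i,-d_\ell}(\cE_f) \neq 0$ iff $f_{\ell-i} > -d_\ell > f_{\ell-i+1}$, which is exactly the stated condition.

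So the proof assembles as: (1) apply Theorem~\ref{thm:betti numbers of pairing} and use purity of $\FF_d$ to see the sum has at most one nonzero summand, indexed by the unique $\ell$ with $d_\ell = j$; (2) observe $\beta_{\ell, d_\ell}(\FF_d) \neq 0$ since $\FF_d$ has codimension $n+1$ and hence nonzero Betti numbers throughout its length; (3) translate nonvanishing of the remaining factor $\gamma_{\ell-i,-d_\ell}(\cE_f)$ into the root-interval condition via the cohomology table of a supernatural sheaf. I expect the main obstacle to be purely bookkeeping: pinning down the index conventions so that the homological degree $q = \ell - i$ of the cohomology matches the position of the root interval $f_{\ell-i} > -d_\ell > f_{\ell-i+1}$, and correctly handling the boundary cases (the outermost roots, and the extreme homological positions $i = \ell$ and $i = \ell - s$) where the $\pm\infty$ conventions for $f_0, f_{s+1}$ come into play. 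There is no deep content beyond Theorem~\ref{thm:betti numbers of pairing} and the known shape of supernatural cohomology tables; the work is in matching up the combinatorics cleanly.
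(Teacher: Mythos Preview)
Your approach is correct and essentially identical to the paper's: both apply Theorem~\ref{thm:betti numbers of pairing}, use purity of $\FF_d$ to collapse the sum to a single term, and then invoke the supernatural cohomology pattern $h^q(\cE_f(j))\neq 0 \iff f_q > j > f_{q+1}$ (with $f_0=\infty$, $f_{s+1}=-\infty$); the paper merely streamlines by first reducing to $i=j=0$. One bookkeeping slip to fix: your stated rule ``$h^q(\cE_f(m))\neq 0$ iff $f_q > -m > f_{q+1}$'' has an extra minus sign---the correct condition is $f_q > m > f_{q+1}$---though your final conclusion $f_{\ell-i} > -d_\ell > f_{\ell-i+1}$ is right since the twist is $m=-d_\ell$.
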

\begin{proof}
By twisting and shifting $\FF_\dd$, we may without loss of generality assume that $i=j=0$.  By Theorem~\ref{thm:betti numbers of pairing},
 $\beta_{0,0}(\FF_\dd\cdot \cE_f)$ can only be nonzero if $0$ is one of the entries of $\dd$.  For this proof, we also set $f_0=\infty$ and $f_{s+1}=-\infty$. We may now assume that $d_{\ell}=0$ for a unique $\ell$, and that $f_m>0>f_{m+1}$ for a unique $m$.
Since $\cE_f$ is supernatural, it follows that $\gamma_{q,0}(\cE_f)\ne 0 \iff q=m$.  Hence, we may apply Theorem~\ref{thm:betti numbers of pairing} to conclude
\[
\beta_{0,0}(\FF_\dd\cdot \cE_f)\ne 0 \iff m=\ell,
\]
implying the lemma.
\end{proof}

We now tackle the proof of Theorem~\ref{thm:extremal rays refined}, which can be roughly separated into two pieces:
\begin{itemize}
	\item  A careful analysis of the combinatorics of the relevant simplicial fans, culminating in a classification of the boundary facets.  This argument is combinatorial in nature, and it is concluded in Lemma~\ref{lem:combinatorial}.
	\item  A proof that each of the boundary facets in Lemma~\ref{lem:combinatorial} induces a non-negative functional on any relevant Betti table.  This argument is algebraic/algebro-geometric in nature and relies heavily on our introduction of the functor $\Phi$.
\end{itemize}

We introduce some notation which will be useful for the combinatorial part of the proof.   Throughout the remainder of this section, we take the convention that, for any degree sequence $\dd$, $\FF_{\dd}$ denotes some pure complex of type $\dd$ and with same codimension as $\dd$.  
Fix $(\delta,\epsilon)\in \ZZ^2$ with $\epsilon-\delta\geq n+1$ and let $P_{(\delta,\epsilon)}$ be the subposet consisting of all degree sequences between its minimal element 
\[\bordermatrix{&&&\delta&&&\epsilon-n-1&&\epsilon &&\cr
              d_{\min}= &\dots&-\infty,&-\infty,&\dots&-\infty,&-n-1,&\dots &0, &\infty,&\dots\cr},
              \]
              which has codimension $n+1$, and its maximal element
\[\bordermatrix{&&\delta&&\delta+c_\delta&&&\epsilon &&\cr
              d_{max}= &\dots&0,&\dots&c_{\delta}+1,&\infty,&\dots &\infty,&\infty,&\dots\cr},\]
              which has codimension $c_{\delta}$.
Let $\Sigma_{(\delta,\epsilon)}$ be the convex cone spanned by the pure diagrams $\beta(\FF_{\dd})$, as $\dd$ ranges over the poset $P_{(\delta,\epsilon)}$.  We may apply the proof of~\cite[Proposition~2.9]{boij-sod1} to conclude that $\Sigma_{(\delta,\epsilon)}$ has the structure of a simplicial fan, with simplices corresponding to chains in $P_{(\delta,\epsilon)}$.

We also set $\VV_{(\delta,\epsilon)}$ to be the subspace of $\VV$ defined by
\[
\VV_{(\delta,\epsilon)}:=\bigoplus_{i=\delta}^\epsilon \bigoplus_{j=-\epsilon +i}^{-\delta+i} \QQ.
\]
Visually, $\VV_{(\delta,\epsilon)}$ is vector space of Betti tables that fit inside the box:
\[
\begin{bmatrix}
\beta_{\delta,\delta-\epsilon}&\beta_{\delta+1,\delta-\epsilon+1}&\dots&\beta_{\epsilon,0}\\
\vdots & &\ddots & \vdots\\
\beta_{\delta,-1}&\beta_{\delta+1,0}&\dots&\beta_{\epsilon,\epsilon-\delta-1}\\
\beta_{\delta,0}&\beta_{\delta+1,1}&\dots&\beta_{\epsilon,\epsilon-\delta}\\
\end{bmatrix}
\]

Our proof of Theorem~\ref{thm:extremal rays refined} relies on a classification of the halfspaces defining $\Sigma_{(\delta,\epsilon)}$.  Given any maximal chain in $P_{(\delta,\epsilon)}$, the corresponding pure Betti tables span the codimension $c_{\delta}$ subspace of $\VV_{(\delta,\epsilon)}$ cut out by the vanishing of the first $c_{\delta}$ Herzog--K\"uhl equations (this follows by adapting the argument from the proof of \cite[Proposition~1]{boij-sod2}).  
Thus, inside of its span, $\Sigma_{(\delta,\epsilon)}$ is a full-dimensional, equidimensional simplicial fan.  
As discussed in~\cite[Appendix]{bbeg}, we may thus talk about boundary facets of $\Sigma_{(\delta,\epsilon)}$.
These correspond to submaximal chains of $P_{(\delta,\epsilon)}$ and, as in \cite[Proposition~2.12]{boij-sod1}, each halfspace (with the exception of case (i) below) is entirely determined by the omitted element $\dd$ and its two adjacent neighbors $\dd'$ and $\dd''$.  We thus refer to such a halfspace by the triplet $\dd'<\dd<\dd''$.  

\begin{lemma}\label{lem:combinatorial}
The different types of boundary facets of $\Sigma_{(\delta,\epsilon)}$ that arise are the following.  (In the examples given in this list, we always assume that $\delta<0$.)
\begin{enumerate}[(i)]

	\item A chain where we omit either the maximal or minimal element of $P_{(\delta,\epsilon)}$.

	\item A chain where $d'_i<d_i<d''_i$ for some $i$.  This can arise in several different ways depending on whether a homological/codimension shift occurs.
	One such example is any chain
	\[
(\dots,1^\zp,\dots) <(\dots, 2^\zp,\dots) <(\dots,3^\zp,\dots).
	\]

	\item A chain where $\dd', \dd,$ and $\dd''$ differ by $1$ in adjacent positions.  For this to yield a boundary facet, we must have $d''_i=d_i+1=d'_i+1$, $d''_{i+1}=d_{i+1}=d'_{i+1}+1$ and $d'_i+1=d'_{i+1}$.  For example, the chain
			\[
(\dots,, 0,1^\zp,\dots) <(\dots, 0,2^\zp,\dots) <(\dots, 1,2^\zp,\dots) .
			\]
	\item A chain where $\dd'<\dd<\dd''$ consists of either two homological shifts or a homological shift and a codimension shift (in either order).  
For example, the chain:
		\[
		(\dots, -\infty^\zp,0,2-\delta,3-\delta,\dots)<(\dots, -\epsilon^\zp,0,2-\delta,\infty,\dots)<(\dots, -\epsilon^\zp,0,\infty,\infty,\dots).
		\]

	\item A chain where $\dd'<\dd<\dd''$ consists of	 two codimension shifts.
One such example is the chain:
\[
(\dots, -\infty^\zp,0,2-\delta,3-\delta,\dots)<(\dots, -\infty^\zp,0,2-\delta,\infty,\dots)<(\dots, -\infty^\zp,0,\infty,\infty,\dots).
\]
\end{enumerate} 
\end{lemma}
\begin{proof}
This proof differs in a significant way from similar proofs in \cites{eis-schrey1,boij-sod2}.  This is because there are now three ways that adjacent elements $\dd<\dd'$ can arise in the poset $P_{(\delta,\epsilon)}$.   The first possibility is simply that $\dd'$ is obtained from $\dd$ by adding $1$ to a single entry, as in:
\[
(\dots,3^\circ,\dots)<(\dots,4^\circ,\dots).
\]

The second possibility is when the finite entries of $\dd$ and $\dd'$ lie in different homological positions, and this can only arise in a specified way.  Namely, let $b$ be the codimension of $\dd$.  For this case to occur, there exists a unique column $i$ such that: $d_j=d'_j$ for $j\notin \{i, i-b-1\}$; $d_i=i-\delta$ and $d'_i=\infty$; and $d_{i-b-1}=-\infty$ and $d'_{i-b-1}=i-b-1-\epsilon$.  For example, if $b=2$ and $i=0$, then we could have:
\[
(\dots, -\infty, -2, 0, -\delta^\circ, \dots)<(\dots, -3-\epsilon,-2, 0, \infty^\circ, \dots).
\]
We refer to this as a \defi{homological shift from column $i$}.

The third possibility is that $\dd$ and $\dd'$ correspond to degree sequences of different codimensions.  Again, this can only arise a specific manner.  Namely, we must have $d_{i}=i-\delta$ and $d'_i=\infty$ for some $i$.  
For example, if $i=0$, then we could have:
\[
(\dots, -\infty, -2, 0, -\delta^\circ, \dots)<(\dots, -\infty,-2, 0, \infty^\circ, \dots).
\]
We refer this as a \defi{codimension shift}.  

The proof of the lemma then follows by a case-by-case analysis of the various combinations of adding $1$ to a single entry, homological shifts, and codimension shift.  This analysis is involved though elementary, and we omit the details.
\end{proof}

\begin{proof}[Proof of Theorem~\ref{thm:extremal rays refined}]
Fix $(\delta,\epsilon)\in \ZZ^2$ with $\epsilon-\delta\geq n+1$.  We first let $\widetilde{P}_{(\delta,\epsilon)}$ be the poset of degree sequences that are compatible with $\cc$ and whose corresponding rays lie in $\VV_{(\delta,\epsilon)}$.  Since we can ignore any column $i$ with $c_i=\nothing$, we may, without loss of generality, assume that $c_\delta \geq 0$.  By possibly shrinking $n$ (which will not affect the rest of the proof), we may also assume that the minimal degree sequence in $\widetilde{P}_{(\delta,\epsilon)}$ has codimension $n+1$.  Lastly, we may assume that $c_{\epsilon-n-1}<\infty$, as else we could replace $\epsilon$ by $\epsilon-1$.
This implies that $\widetilde{P}_{(\delta,\epsilon)}$ equals the poset $P_{(\delta,\epsilon)}$ defined above in terms of $d_{\min}$ and $d_{\max}$.

We define $D_{(\delta,\epsilon)}$ to be the intersection of the halfspaces corresponding to all boundary facets of $\Sigma_{(\delta,\epsilon)}$.  We clearly have $D_{(\delta,\epsilon)} \subseteq \Sigma_{(\delta,\epsilon)}$.
The existence of pure resolutions~\cite[Theorem~0.1]{eis-schrey1} implies that, for any degree sequence from $P_{(\delta,\epsilon)}$, the corresponding ray lies in $\BBQ^{\cc}(S)$.  Hence $\Sigma_{(\delta,\epsilon)}$ is a subcone of $\BBQ^{\cc}(S)\cap \VV_{(\delta,\epsilon)}$ for all $(\delta,\epsilon)$, and hence we have
\[
D_{(\delta,\epsilon)} \subseteq \Sigma_{(\delta,\epsilon)}\subseteq \BBQ^{\cc}(S)\cap \VV_{(\delta,\epsilon)}.
\]
Since any point of $\BBQ^{\cc}(S)$ lies in some subspace of the form $\VV_{(\delta,\epsilon)}$, we must show that
\begin{equation}\label{eqn:inclusion}
\BBQ^{\cc}(S)\cap \VV_{(\delta,\epsilon)} \subseteq D_{(\delta,\epsilon)}.
\end{equation}

To complete the proof, we will identify the linear functional corresponding to each boundary facet of $\Sigma_{(\epsilon,\delta)}$, as identified in Lemma~\ref{lem:combinatorial} (note that the functional is unique, up to scalar multiple, in the vector space spanned by $\Sigma_{(\delta,\epsilon)}$); then we will show that this functional has the form $(\beta(\FF)\mapsto \zeta(\FF\cdot \cE_f))$, where $\cE_f$ is a supernatural sheaf and where $\zeta$ is one of the functionals that arises from Corollary~\ref{cor:dualconeA refined}.   We will then apply Lemma~\ref{lem:refined positivity} to see that the result is nonnegative on any $\beta(\FF)\in \BBQ^{\cc}(S)$.

For a facet of type (ii), we consider the functional:
\[
\beta(\FF)\mapsto \beta_{i,{d_i}}(\FF\cdot \cO(d_i)).
\]
If $\ee$ is a degree sequence, then by Lemma~\ref{lem:pure and supernatural}, this functional is nonzero on $\beta(\FF_{\ee})$ if and only if $e_i=d_i$.
Let $\ee$ be a degree sequence from any chain of type (ii).  Then $e_i=d_i$ if and only if $\ee=\dd$, and it thus follows that this functional corresponds to any boundary facet of type (ii).  The functional is clearly nonnegative on any $\beta(\FF)$, completing the argument for (ii).

For a facet of type (i), we consider the case where we omit the maximal element, the other case being similar.  Note that $(d_{\max})_{\delta}=0$ and $e_{\delta}<0$ for all other degree sequences $\ee$ in $P_{(\delta,\epsilon)}$.  By the same argument as in the previous paragraph, the functional 
\[
\beta(\FF)\mapsto \beta_{\delta,0}(\FF\cdot \cO)
\]
corresponds to this boundary facet and is nonnegative on any $\beta(\FF)$.

For (iii), we let $b=\codim(\dd)$.  Without loss of generality we can shift homological indices and assume that $d_j\in \ZZ$ if and only if $j\in \{0, \dots, b\}$.   We may also assume that $d_i=0$ and thus that $d_{i+1}=2$.  We fix the root sequence $f=(-d_0>-d_1>\dots >-d_{i-1}>-d_{i+2}>\dots >-d_{b})$, by which we mean $f_1=-d_0, f_2=-d_1$ and so on.  We let $\cE_f$ be any supernatural vector bundle of type $f$.  We claim that the corresponding functional is given by
\begin{equation}\label{eqn:caseiii}
\beta(\FF) \mapsto \chi_{0,0}(\FF\cdot \cE_f).
\end{equation}
We first observe that this functional is strictly positive on $\FF_{\dd}$.  Since $-d_j$ is a root of $\cE_f$ for all $j\ne i,i+1$, applying Theorem~\ref{thm:betti numbers of pairing} yields that $\FF_{\dd}\cdot \cE_f$ is a two-term complex of the form
\[
\FF_\dd\cdot \cE_f=\left[ A^N \gets A^N(-2)\right],
\]
where $N=\beta_{i,d_i}(\FF_{\dd})\cdot \gamma_{i,-d_i}(\cE_f)$.   (To see that the rank of the $A(-2)$ is also $N$, we use the following observation: since $\codim \FF_\dd = b >\dim \cE_f$, after a generic translation, we may assume by homological tranversality that $\FF_\dd\otimes \cE_f$ is exact~~\cite[Theorem, p. 335]{miller-speyer}. Then Proposition~\ref{prop:exact} implies that the alternating sum of the ranks of $\FF_\dd\cdot \cE_f$ equals $0$.) It follows that $\chi_{0,0}(\FF_{\dd}\cdot \cE_f)=N>0$.  

Next, we recall that the functional $\chi_{0,0}$ splits a Betti table on $A$ into two regions:
\[
\chi_{0,0}\Rightarrow 
\left|
\begin{array}{cccccccc}
 & \vdots& &\multicolumn{1}{|c}{\vdots}&& \vdots&&\\
\dots&0&0&\multicolumn{1}{|c}{1}&-1&1&-1&\dots\\
\dots&0&0&\multicolumn{1}{|c}{1}&-1&1&-1&\dots\\
\dots&0&0&\multicolumn{1}{|c}{1^\circ}&-1&1&-1&\dots\\ \cline{4-5}
\dots&0&0&0&0&\multicolumn{1}{|c}{1}&-1&\dots\\
\dots&0&0&0&0&\multicolumn{1}{|c}{1}&-1&\dots\\
& \vdots&&\vdots&& \multicolumn{1}{|c}{\vdots}&&\\
\end{array}
\right|,
\]
where the $1^\circ$ corresponds to $\beta_{0,0}$.   If, for some $\FF$, $\beta(\FF\cdot \cE_f)$ lies entirely in the upper region, then $\chi_{0,0}(\FF\cdot \cE_f)=\chi(\FF\cdot \cE_f)$ which equals zero whenever $\FF\cdot \cE_f$ is generically exact.  On the other hand, if $\beta(\FF\cdot \cE_f)$ lies entirely in the lower region, then $\chi_{0,0}(\FF\cdot \cE_f)$ equals the dot product of $\beta(\FF\cdot \cE_f)$ with the zero matrix.

Thus, to complete our computation for case (iii), it suffices to verify the following claim: if $\ee\leq \dd'$ then $\beta(\FF_{\ee}\cdot \cE_f)$ lies entirely in the upper region and $\FF_e\cdot \cE_f$ has the Betti table of a generically exact free complex; on the other hand, if $\ee\geq \dd''$ then $\beta(\FF_{\ee}\cdot \cE_f)$ lies entirely in the lower region.  

This claim follows from repeated applications of Lemma~\ref{lem:pure and supernatural}.  For instance, fix some $\ee\leq \dd'$.  We will verify that $\beta_{0,e_\ell}(\FF_{\ee}\cdot \cE_f)\ne 0$ only if $e_{\ell}\leq 0$ (and thus any such entry lies in the upper region determined by $\chi_{0,0}$).  By Lemma~\ref{lem:pure and supernatural}, we have
$
f_{\ell}>-e_{\ell}>f_{\ell+1}.
$
However, $e_{\ell}\leq d_{\ell}$ and thus $f_{\ell}\geq -d_{\ell}$.  By construction of $f$, this holds if and only if $\ell \leq i$.  Since $\ee$ is a degree sequence, we then have $e_{\ell}\leq e_i\leq d_i=0$, as desired.  Similar arguments verify that any nonzero entry of $\FF_{\ee}\cdot \cE_f$ automatically lies in the upper region when $\ee\leq \dd'$. Further, since $\ee\leq \dd'$ and since $\cc$ is nondecreasing, it follows that $\codim(\ee)\geq \codim(\dd')$ and thus that $\codim(\FF_{\ee})+\codim(\cE_f)\geq n+1$.  By Lemma~\ref{lem:refined positivity}, it follows that $\FF_{\ee}\cdot \cE_f$ has the Betti table of a generically exact free complex.  This completes the argument when $\ee\leq \dd'$.  By applying a similar argument in the case $\ee\geq \dd''$, we conclude that we have found the correct functional for the boundary facet of type (iii).

To complete the argument for case (iii), we must check that the functional in \eqref{eqn:caseiii}  is nonnegative on any $\beta(\FF)\in \BBQ^{\cc}(S)$.  Since the degree sequence $\dd$ is compatible with $\cc$, it follows that $c_0\leq b=\codim(\dd)\leq c_1$.  Since $\dim \cE_f=b-1$, it then follows that
$
\cE_f\in \CQ^{n+1-b}(\PP^n)\subseteq \CQ^{n+1-c_1}(\PP^n).
$
Combining Lemma~\ref{lem:refined positivity} and Corollary~\ref{cor:dualconeA refined} yields the desired nonnegativity claim.
%\daniel{This is the spot that requires more thinking.  Key is that $b\leq c_1$ so any positive rank parts of $\FF\cdot \cE_f$ lie in nonpositive homological degrees, and hence $\chi_{0,0}$ evaluates to $0$ on these.  May want to sharpen Lemma 5.1; the key hypothesis may be about $c_1$, not $c_0$?}

We next consider case (iv).  Set $b=\codim(\dd)$ and shift homological indices so that $d_j\in \ZZ$ if and only if $j\in \{0, \dots, b\}$.  In this case, we will have $d_0=-\epsilon$ and $d_b=b-\delta$.  We fix the root sequence $f=(-d_{1}>-d_{2}>\dots>-d_{b-1})$ and we let $\cE_f$ be any supernatural bundle of type $f$.   The corresponding functional is:
\[
\beta(\FF)\mapsto \chi_{1,b-\delta}(\FF\cdot \cE_f).
\]
Note first that
\[
\FF_{\dd}\cdot \cE_f=\left[{A^N}(\epsilon)\gets A^N(\delta-b) \right],
\]
where $N=\beta_{0,\epsilon}(\FF_{\dd})\cdot \gamma_{0,-\epsilon}(\cE_f)$.  Hence the functional evaluates to $N>0$ on $\FF_{\dd}$.  
As in the proof of case (iii) above, the cases where we consider a degree sequence $\ee$ satisfying either $\ee\leq \dd'$  or $\ee\geq \dd''$ split in half:
if $e\leq \dd'$, then $\FF_{\ee}\cdot \cE_f$ has the Betti table of a generically exact free complex and, by Lemma~\ref{lem:pure and supernatural}, it is supported entirely in the upper region defined by $\chi_{1,-\delta-b}$; if $\ee\geq \dd''$, then $\beta(\FF_{\ee}\cdot \cE_f)$ is supported entirely in the lower region.  

We must also check the nonnegativity of this functional on any complex $\FF$ that is compatible with $\cc$.   As in case (iii), we may simply apply Lemma~\ref{lem:refined positivity} and Corollary~\ref{cor:dualconeA refined} to verify this nonnegativity.

For case (v), we let $b$ be the codimension of $\dd$.  By reindexing the columns, it suffices to consider the case where the finite entries of $\dd$ lie in homological positions $\{0,\dots,b\}$.  Note that $d_{b}=d''_b=b-\delta$ and $d''_{b+1}=b+1-\delta$.  We let $f$ be the root sequence $f=(-d_0>\dots >-d_{b-2})$ and we let $\cE_f$ be a supernatural sheaf of type $f$.  The appropriate functional has the form:
\[
\beta(\FF)\mapsto \chi_{1,b-\delta}(\FF\cdot \cE_f).
\]
By a minor adaption of the argument used for case (iii), we may confirm that this is indeed the correct functional and that is nonnegative on any $\beta(\FF)\in \BBQ^{\cc}(S)$.

We have thus verified the inclusion
\[
\BBQ^{\cc}(S)\cap V_{(\delta,\epsilon)}\subseteq D_{(\delta,\epsilon)},
\]
and this implies the theorem.
\end{proof}

%%%%%%%%%%%%%%%%%%%%%%%%%%%%%%%%%%%%%%%%%
%%%%%%%%%%%%%%%%%%%%%%%%%%%%%%%%%%%%%%%%%
\section{Monads}\label{sec:monads}
%%%%%%%%%%%%%%%%%%%%%%%%%%%%%%%%%%%%%%%%%
%%%%%%%%%%%%%%%%%%%%%%%%%%%%%%%%%%%%%%%%%

Recall that a free complex $\FF=[\dots \gets \FF_{-1}\gets \FF_0\gets \FF_1\gets \dots]$ is a \defi{free monad} for a sheaf $\cF$ on $\PP^{n}$ if 
$$
\HH_i\widetilde{\FF}\cong
\begin{cases}                    \cF&\text{ if } i=0\\
					0 &\text{ if } i\neq 0.
\end{cases}
$$					
See \cite[\S8]{eis-floy-schrey}, and the references therein, for more on free monads.  We prove a decomposition theorem for the Betti table of a free monad.
% See also Example~\ref{ex:monads}.
\begin{cor}\label{cor:monads}
Let $\FF$ be a free monad for a sheaf $\cF$ on $\PP^n$.  Then we may write
\[
\beta(\FF)=\lambda'\beta(\FF')+\lambda''\beta(\Hom(\FF'',S)).
\]
where: $\FF'=[\FF'_0\gets \FF'_1\gets \dots]$ is a free complex that resolves a coherent sheaf on $\PP^n$ of rank equal to the rank of $\cF$; $\FF''=[\FF''_0\gets \FF''_1\gets \dots]$ is a free complex that resolves a coherent sheaf of rank $0$ on $\PP^n$; and $\lambda'$ and $\lambda''$ are nonnegative rational scalars.
\end{cor}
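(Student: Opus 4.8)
The plan is to pass $\beta(\FF)$ through the Decomposition Theorem (Theorem~\ref{thm:extremal rays refined}) and then collect the resulting pure pieces into the two prescribed complexes.

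\emph{Setting up the decomposition.} Since $\FF$ is a free monad for $\cF$, the complex of sheaves $\widetilde\FF$ has homology only in position $0$, so $\HH_i\FF$ has finite length for every $i\ne0$ while $\codim\HH_0\FF=\codim\cF$. Hence $\FF$ is compatible with the codimension sequence $\cc=(\dots,0,0^\circ,n+1,n+1,\dots)$, and Theorem~\ref{thm:extremal rays refined} expresses $\beta(\FF)$ uniquely as $\sum_{\dd}a_{\dd}\,\beta(\FF_{\dd})$, a positive combination along a chain of shifted pure resolutions. A degree sequence compatible with this $\cc$ is of exactly one of three shapes: a single finite entry in a position $\le-1$ ($\FF_{\dd}$ a free module in a negative position); finite entries in positions $0,1,\dots,\ell$ with $0\le\ell\le n+1$ ($\FF_{\dd}$ the resolution of a Cohen--Macaulay module of codimension $\ell$); or finite entries in positions $k,\dots,k+n+1$ with $k\ge1$ ($\FF_{\dd}$ the resolution of a finite-length module, living in positions $\ge1$).

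\emph{Re-bundling into two pieces.} One then partitions the chain into a ``right'' family $R$ and a ``left'' family $L$, with the $\beta(\FF_{\dd})$ in $R$ supported in positions $\ge0$ and those in $L$ supported in positions $\le0$, apportioning the position-$0$ column between the two. The sum over $R$ lies in $\BBQ^{\cc'}(S)$ with $\cc'=(\dots,\nothing,0^\circ,n+1,n+1,\dots)$, which by Example~\ref{ex:sheaf resolutions} is the cone of complexes whose sheafification resolves a coherent $\cO_{\PP^n}$-module; because that cone is spanned by genuine Betti tables and Betti tables add under direct sums, the sum over $R$ equals $\lambda'\beta(\FF')$ for an honest free complex $\FF'=[\FF'_0\gets\FF'_1\gets\cdots]$ resolving a coherent sheaf, and dually the sum over $L$ equals $\lambda''\beta(\Hom(\FF'',S))$ for an honest free complex $\FF''=[\FF''_0\gets\FF''_1\gets\cdots]$ resolving a coherent sheaf. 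It remains to see that the sheaf resolved by $\FF'$ has rank exactly $\rank\cF$ and the one resolved by $\FF''$ has rank $0$. This is forced by an additivity identity: for a bounded free complex the alternating sum $\sum_{i,j}(-1)^i\beta_{i,j}$ equals the generic rank of the total homology and is additive under the decomposition; since that generic rank is $\rank\cF$ for $\FF$, the partition $R\sqcup L$ must be chosen so that the generic rank of $\FF'$ is $\rank\cF$ and that of $\FF''$ is $0$ (the latter makes $\FF''$ resolve a rank-$0$ sheaf, and the former forces $\lambda'=1$ when $\rank\cF\ne0$).

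\emph{The main obstacle.} The crux is showing that the partition $R\sqcup L$ with the required properties exists. Two points make this nontrivial. First, the position-$0$ column is shared and the middle-shape pure summands straddle it, so one cannot simply cut the chain at homological position $0$; one has to use the explicit structure of the poset of degree sequences compatible with $\cc$ from the proof of Theorem~\ref{thm:extremal rays refined} (the location of the codimension shifts and homological shifts) to decide the apportionment, and to check that the part of $\beta(\FF)$ assigned to $R$ remains integral so that $\lambda'=1$ is realizable. Second --- and this is the heart of the matter --- the left-shape summands are free modules in various negative positions and contribute to the generic rank with alternating signs, so arranging that $\FF''$ has generic rank $0$ while $\FF'$ has generic rank $\rank\cF$ requires a genuine argument rather than mere placement. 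Verifying afterwards that each half satisfies the Herzog--K\"uhl equations of a sheaf resolution of its claimed rank (and not merely lies in the ambient cone) is then routine, and for $\FF'$ one can alternatively give an explicit model: replace $\FF_0$ by a generic free summand of the appropriate rank, keep $\FF_{\ge1}$, and use genericity to see the result sheafifies to a resolution of a rank-$\cF$ sheaf. Finally the degenerate case $\rank\cF=0$ deserves a separate remark: there $\lambda'$ need not equal $1$, the two halves may be treated symmetrically, and all rank bookkeeping is vacuous.
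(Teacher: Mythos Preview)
Your framework is right---apply the Decomposition Theorem and regroup---but the step you yourself label ``the heart of the matter'' is a genuine gap, not a routine verification. In your single decomposition along $\cc=(\dots,0,0^\circ,n+1,\dots)$, the summands living in strictly negative homological positions are all shifted free modules $S(-j)[-k]$ with $k\ge 1$. Their duals are free modules sitting in positions $\ge 1$, and a positive combination of such tables does not lie in the cone of $\cO_{\PP^n}$-resolutions except trivially: a complex consisting of free modules scattered in positions $\ge 1$ has nonzero homology in each of those positions. So your $L$ does not automatically give $\lambda''\beta(\FF'')$ for any $\FF''$ resolving a sheaf. The Euler-characteristic bookkeeping does not rescue this: moving column-$0$ codimension-$0$ pieces between $R$ and $L$ shifts the Euler characteristic of $L$ only upward, and nothing in your outline controls the sign of the deficit. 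Your alternative ``explicit model'' for $\FF'$ (replace $\FF_0$ by a generic summand) is likewise unsupported.

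The paper closes this gap by a different device: it runs the greedy decomposition \emph{twice}, once on $\FF$ with $\cc'=(\dots,0,0^\circ,n+1,\dots)$ and once on the dual $\Hom(\FF,S)$ with $\cc''=(\dots,0,0^\circ,n+1,n+1,\dots)$. From each it retains only the summands of strictly positive codimension, obtaining tables $D'$ and $D''$ with $D'$ supported in columns $\ge 0$ and $(D'')^\vee$ in columns $\le 0$, and each agreeing with $\beta(\FF)$ away from column $0$. Thus $E:=\beta(\FF)-D'-(D'')^\vee$ is supported purely in column $0$, with entries summing to $\rank\cF$. The key point---and this is where the greedy order is actually used---is that within column $0$ the algorithm for $D'$ consumes entries from the top down while its dual $(D'')^\vee$ consumes from the bottom up, so they cannot overlap and $E\ge 0$. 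One then sets $\lambda'\beta(\FF')=D'+E$ and $\lambda''\beta(\FF'')=D''$; both now sit in the correct sheaf-resolution cones by construction, with the ranks read off from the Euler characteristics. This two-pass maneuver is precisely the idea missing from your argument.
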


\begin{proof}
Throughout this proof, we take the convention that, for any degree sequence $\dd$, $\FF_{\dd}$ denotes some pure complex of type $\dd$ and with same codimension as $\dd$.

The complex $\FF$ is compatible with the codimension sequence $\cc'=(\dots,0,0,0^{\zp},n+1,\dots)$. We may thus decompose $\beta(\FF)$ according the decomposition algorithm induced by the simplicial structure on $\BBQ^{\cc'}(S)$.  This decomposition has the form
\[
\beta(\FF)=\sum_{i=0}^s a_i\beta(\FF_{\dd^i}).
\]
where $a_i\in \QQ_{\geq 0}$ and where $\dd^0<\dd^1<\dots <\dd^s$ is a chain of degree sequences such that each $\dd^i$ is compatible with $\cc'$.

Since the $\dd^i$ form a chain, there is some maximal $r$ such that $\dd^r$ has codimension $>0$.  For $i=0\dots,r$, the entries of $\dd^{i}$ that are $>-\infty$ have strictly positive index, so the complex $\FF_{\\d^{i}}$ is concentrated in non-negative 
homological degree. Setting $D':=\sum_{i=0}^r a_i\beta(\FF_{\dd^i})$, we have 
\[
\beta_{i,j}D'=\begin{cases}
\beta_{i,j}\FF &\text{ if } i>0\\
0 & \text{ if } i<0.
\end{cases}
\]

We next consider the dual Betti table $\beta(\Hom(\FF,S))$.  Note that $\Hom(\FF,S)$ is compatible with the codimension sequence $\cc''=(\dots,0,0,0^{\zp},n+1,n+1\dots)$.  As above, we obtain a decomposition $\beta(\Hom(\FF,S))=\sum_{i=0}^{s'} b_i\beta(\FF_{\ee^i})$ according to the decomposition algorithm induced by $\BBQ^{\cc''}(S)$.  We define $D''$ to be the sum $D''=\sum_{i=0}^{t} b_i\beta(\FF_{\ee^i})$ where $t$ is the maximal value such that the degree sequence $\ee^t$ has codimension $>0$.  
As above, 
\[
\beta_{i,j}D''=\begin{cases}
\beta_{i,j}(\Hom(\FF,S))=\beta_{-i,-j}\FF &\text{ if } i>0\\
0 & \text{ if } i<0.
\end{cases}
\]

We define $(D'')^\vee\in \VV$ by the formula $\beta_{i,j}(D'')^\vee=\beta_{-i,-j}D''$, and we consider the difference of Betti tables:
\[
E:=\beta(\FF)-D'-(D'')^\vee.
\]
By construction, $E$ equals $0$, except possibly in column $0$.  

We first note that the sum of the entries of $E$ is precisely $\rank \cF$.  This follows from the fact that $D'$ and $(D'')^\vee$ are sums of rank $0$ Betti tables, and thus:
\[
\sum_j \beta_{0,j} E= \sum_{i,j} (-1)^i\beta_{i,j}(\FF)=\rank \cF.
\]

We next claim that all entries of $E$ are nonnegative.  This follows from the form of the decomposition algorithm.  Within a single column, the decomposition algorithm works from top to bottom.  So $D'$ consists of the lowest degree syzygies from column $0$.  Since dualizing inverts degrees, $(D'')^\vee$ consists of the highest degree syzygies from column $0$.  Thus, the only possible problem would be if $D'$ and $(D'')^\vee$ overlapped in column $0$, and in this case, all entries of $E$ would be strictly negative.  This would contradict the computation in the previous paragraph.

We now conclude the proof.  By construction, we may choose a scalar $\lambda'$ and a free complex $\FF'$ that is compatible with $\cc'$, such that $\lambda'\beta(\FF')=D'+E$.  Since $\FF'=[\FF'_0\gets \dots]$ and is compatible with $\cc'$, it follows that $\widetilde{\FF}'$ resolves a coherent sheaf $\cF'$ on $\PP^n$.  Further the rank of $\cF'$ equals $\sum_{j} \beta_{0,j} E$ which equals the rank of $\cF$.  Next, we may choose a scalar $\lambda''$ and a free complex $\FF''=[\FF''_0\gets \dots]$ that is compatible with $\cc''$, such that $\lambda''\beta(\FF'')=D''$.  It also follows that $\widetilde{\FF}''$ resolves a rank $0$ sheaf $\cF''$.
\end{proof}

\begin{example}[Free Monads]\label{ex:monads}
In \cite[Example 8.2]{eis-floy-schrey}, it is shown that
\[
\widetilde{\FF}:=\left[0\gets \cO_{\PP^4}(-1)^2 \gets  \cO_{\PP^4}(-2)^{11}\gets \overset{\zp}{\cO_{\PP^4}(-3)^{20}}\gets \cO_{\PP^4}(-4)^{10}\gets 0 \right]
\]
is a free monad for the ideal sheaf of a certain rational surface in $\PP^4$.  This decomposes as
\begin{align*}
\beta(\FF)&=
\begin{bmatrix}
2&11&20^\zp&10
\end{bmatrix}
\\
&=\beta(\FF')+\beta(\FF'')
\\
&=
\begin{bmatrix}
-&-&11^\zp&10
\end{bmatrix}
+
\begin{bmatrix}
2&11&9^\zp&-
\end{bmatrix}
\\
&=
\left(
\begin{bmatrix}
-&-&10^\zp&10
\end{bmatrix}
+
\begin{bmatrix}
-&-&1^\zp&-
\end{bmatrix}
\right)
+
\left( \begin{bmatrix}
2&4&2^\zp&-
\end{bmatrix}
+
\begin{bmatrix}
-&7&7^\zp&-
\end{bmatrix}
\right).
\end{align*}
\end{example}

%%%%%%%%%%%%%%%%%%%%%%%%%%%%%%%%%%%%%%%%%
%%%%%%%%%%%%%%%%%%%%%%%%%%%%%%%%%%%%%%%%%
\section{Categorified Eisenbud--Schreyer functionals}\label{sec:functionals}
%%%%%%%%%%%%%%%%%%%%%%%%%%%%%%%%%%%%%%%%%
%%%%%%%%%%%%%%%%%%%%%%%%%%%%%%%%%%%%%%%%%

We now show explicitly how the functionals $\langle -, -\rangle_{\tau,\kappa}$
introduced by Eisenbud and Schreyer in~\cite{eis-schrey1}
arise from the
categorical pairing $\Phi$. We will use the notation and definition of these
functionals from~\cite{eis-schrey-icm}. (They are the same as the ones in~\cite{eis-schrey1} but are indexed differently.)
Although the following proposition only treats
the case when $\cE$ is supernatural, we may use \cite[Theorem~0.1]{eis-schrey2} to extend this linearly to the case of an arbitrary $\cE$.

\begin{prop}\label{thm:categorified:2}
Let $\cE$ be a supernatural sheaf of dimension $s$ on $\PP^n$ with root sequence $f$.  Every Eisenbud--Schreyer functional $\langle -, \cE\rangle_{\tau,\kappa}$ may be realized as a composition of the functor $\Phi(-,\cE)$ with one of the graded partial Euler characteristic functionals from Definition~\ref{defn:chi}.

More precisely, fix any $1\leq \tau\leq s$ and $\kappa\in \ZZ$.  Then there exists $\nu\in \ZZ$ making the diagram
\[
\xymatrix{
\DD^b(S)\ar[rr]^{\langle -, \cE\rangle_{\tau,\kappa}}\ar[rd]_{\Phi(-,\cE)}&& \ZZ\\
&\DD^b(A)\ar[ru]_{\chi_{0,\nu}}&
}
\]
commute.
%\[
%\langle \FF, \cE\rangle_{\tau,\kappa}= 
% \chi_{0, \nu}(\FF\cdot \cE).
%\]
\end{prop}

\begin{proof} 
Set $\nu:=\min\{\max\{\kappa, -f_{\tau}-1\},-f_{\tau+1}-1\}$.  It suffices to verify the claim for one-term complexes of the form $\FF=S^1(-u)[v]$, with $u$ and $v$ arbitrary.  
If we set $f_0=\infty$ and $f_{s+1}=-\infty$, then there exists a unique $p$ such that $f_p\geq -u>f_{p+1}$, and thus  $\beta_{v-p,u}(\FF\cdot \cE)=\gamma_{p,-u}(\cE)$ while all other Betti numbers  of $\FF\cdot \cE$ are equal to $0$.  (If $-u$ is a root of $f$, then both functionals are trivially zero, and we henceforth assume that $-u$ is not a root of $f$.)

Hence $\chi_{0, \nu}(\FF\cdot \cE)$ is either $(-1)^{v-p}\gamma_{p,-u}(\cE)$ or $0$, depending on whether or not the $1$-term complex $\FF\cdot \cE$ lies in the ``upper'' or ``lower'' region of $\chi_{0,\nu}$.  Similarly, one may check from the definition in \cite{eis-schrey-icm} that $\langle \beta(\FF),\gamma(\cE)\rangle_{\tau,\kappa}$ is either $(-1)^{v-p}\gamma_{p,-u}(\cE)$ or $0$.  
It thus suffices to confirm that our two functionals are nonzero in precisely the same situations.  We first consider the case when $v-p<0$.  Then $\FF\cdot \cE$ is supported in homological degrees $<0$, and hence $\chi_{0, \nu}(\FF\cdot \cE)$ is automatically $0$. Similarly, since $v<p$ there are no terms in the definition of $\langle \beta(\FF),\gamma(\cE)\rangle_{\tau,\kappa}$ of the form $\beta_{v,u}\gamma_{p,-u}$, and hence this functional is also zero.  

It is similarly routine to check that, if $v-p>1$, then both functionals are nonzero.  We are then left with the cases $v-p=0$ and $v-p=1$.  For $v-p=0$, we have:
\begin{equation}\label{eqn:cond1}
\langle \beta(\FF),\gamma(\cE)\rangle_{\tau,\kappa}=0 \iff p> \tau \text{ or } p=\tau \text{ and } u>\kappa.
\end{equation}
We also have
\begin{equation}\label{eqn:cond2}
\chi_{0,\nu}(\FF\cdot \cE)=0 \iff u> \nu=\min\{\max\{\kappa, -f_{\tau}-1\},-f_{\tau+1}-1\}.
\end{equation}
We observe that the conditions in \eqref{eqn:cond1} and \eqref{eqn:cond2} are equivalent.  Namely, the second condition yields that either: $f_{\tau+1}\geq-u$ (which is equivalent to $p>\tau$); or $u>\kappa$ and $f_{\tau}\geq -u$ (which is equivalent to $u>\kappa$ and $p\geq \tau$). 

We now check the case $v-p=1$.  Here we have:
\begin{equation}\label{eqn:cond3}
\langle \beta(\FF),\gamma(\cE)\rangle_{\tau,\kappa}=0 \iff p> \tau \text{ or } p=\tau \text{ and } u>\kappa+1.
\end{equation}
We also have
\begin{equation}\label{eqn:cond4}
\chi_{0,\nu}(\FF\cdot \cE)=0 \iff u> \nu+1=\min\{\max\{\kappa+1, -f_{\tau}\},-f_{\tau+1}\}.
\end{equation}
Again, these conditions are equivalent: either $f_{\tau+1}>-u$ (which, given that $-u$ is not a root of $\cE$, is equivalent to $p>\tau$); or $u>\kappa+1$ and $f_{\tau}>-u$ (which, given that $-u$ is not a root of $\cE$, is equivalent to $u>\kappa+1$ and $p\geq \tau$). 
\end{proof}

%%%%%%%%%%%%%%%%%%%%%%%%%%%%%%%%%%%%%%%%%
%%%%%%%%%%%%%%%%%%%%%%%%%%%%%%%%%%%%%%%%%
\section{Revisiting the Betti table/Cohomology table duality}\label{sec:duality}
%%%%%%%%%%%%%%%%%%%%%%%%%%%%%%%%%%%%%%%%%
%%%%%%%%%%%%%%%%%%%%%%%%%%%%%%%%%%%%%%%%%
%In this section we will sketch the proof of Theorem~\ref{thm:duality}, making precise the sense in which the pairing of cones in the display~\ref{eqn:1} of the introduction is a duality.
%The fact that the target is itself a cone is the reason why so many functionals were needed
%in~\cite{eis-schrey1}.

%A duality between Betti tables and cohomology tables first appeared in \cite{eis-schrey1}, and aspects of this duality
%can now be clarified. 

By Theorem~\ref{thm:betti numbers of pairing}, the pairing $\Phi$ induces a bilinear map:
\begin{align*}
\VV\times \WW & \to \UU\\
(\beta(\FF),\gamma(\cE))&\mapsto \beta(\FF\cdot \cE).
\end{align*}
%This induces a map between three cones of interest, as illustrated in Figure~\ref{fig:bracket}.  

For any $k=1, \dots, n+1$, Theorem~\ref{thm:Phi}  implies that this pairing  restricts to a map of cones:
\begin{equation}\label{eqn:3cones}
\BBQ^{k}(S)\times \CQ^{n+1-k}(\PP^n)\to \BBQ^1(A),
\end{equation}
where we recall that $\BBQ^k(S)$ is the cone of Betti tables corresponding to the codimension sequence $\cc=(\dots,k,k,\dots,)$.
Namely, since $\codim(\widetilde{\FF})+\codim(\cE)\geq n+1$, we may (after translating $\cE$ by an element of $\GL_{n+1}$) assume by~\cite[Theorem, p. 335]{miller-speyer} that $\widetilde{\FF}\otimes \cE$ is exact and thus that $\FF\cdot \cE$ is generically exact.

The 
 appearance of $\BBQ^1(A)$ explains the need for the full family of Eisenbud--Schreyer functionals used in \cite{eis-schrey1}.
\begin{proof}[Sketch of proof of Theorem~\ref{thm:duality}]
The direction (1) implies (2) follows from the map in equation \eqref{eqn:3cones}. For the other direction, we revisit the proof of Theorem~\ref{thm:extremal rays refined} in the case $\cc=(\dots, k,k,\dots)$.  In this case, all boundary facets used in the proof have one of two forms:
\[
\beta(\FF)\mapsto\beta_{i,j}(\FF\cdot \cE) \qquad \text{ or } \qquad \beta(\FF)\mapsto\chi_{i,j}(\FF\cdot \cE),
\]
where $\cE$ is a supernatural sheaf of dimension $n+1-k$.  Since $\beta_{i,j}$ and $\chi_{i,j}$ are nonnegative on $\BBQ^1(A)$, we see that hypothesis (2) implies that all boundary functionals of $\BBQ^k(S)$ are nonnegative on $v$, and hence $v$ lies in $\BBQ^k(S)$.
\end{proof}

Theorem~\ref{thm:duality}  admits a dual statement about cohomology tables.  For any $k\leq  n$, we fix a linear subspace $\PP^{k-1}\subseteq \PP^n$ and we define $\CvbQ(\PP^{k-1})$ as the subcone of $\WW$ generated by cohomology tables of vector bundles on $\PP^{k-1}$.
\begin{prop}\label{prop:other}
Fix an integer $1\leq k\leq n+1$ and a point $w$ in the vector space span of $\CvbQ(\PP^{k-1})$.   The following are equivalent:
\begin{enumerate}
	\item   $w$ is a positive, rational multiple of a cohomology table $\gamma(\cE)$ where $\cE$ is a vector bundle on $\PP^{k-1}$;
	\item  Given any free complex $\FF$ of $S$-modules of codimension $k$, $w$ pairs with $\beta(\FF)$ to give an element of $\BBQ^1(A)$.
\end{enumerate}
\end{prop}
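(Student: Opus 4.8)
The implication (1)$\Rightarrow$(2) is the easy half.  If $w$ is a positive rational multiple of $\gamma(\cE)$ for a vector bundle $\cE$ on $\PP^{k-1}$ then, extending $\cE$ by zero, it becomes a sheaf on $\PP^n$ of codimension $n+1-k$, so $\gamma(\cE)\in \CQ^{n+1-k}(\PP^n)$; since $\beta(\FF)\in \BBQ^k(S)$ for every free complex $\FF$ of $S$-modules of codimension $k$, the map of cones \eqref{eqn:3cones} puts $\beta(\FF)\cdot\gamma(\cE)=\beta(\FF\cdot\cE)$ in $\BBQ^1(A)$, and positive scaling preserves this.  Note also that the hypothesis ``$w$ lies in the span of $\CvbQ(\PP^{k-1})$'' is precisely the statement that $w$ satisfies the linear equalities cutting that span out of $\WW$; this will be used without further comment below.

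For (2)$\Rightarrow$(1) the plan is to transpose the proof of Theorem~\ref{thm:duality}, interchanging the roles of the free complex and the sheaf.  By the Eisenbud--Schreyer description of cohomology cones \cite{eis-schrey1,eis-schrey2}, the cone $\CvbQ(\PP^{k-1})$ is, inside its span, a simplicial fan: its extremal rays are the cohomology tables of supernatural vector bundles on $\PP^{k-1}$ (indexed by the root sequences of dimension $k-1$), its maximal cones are the chains of such root sequences, and its boundary facets, indexed by submaximal chains, each carry a linear functional that is unique up to scalar modulo the span.  It therefore suffices to show that every boundary facet functional of $\CvbQ(\PP^{k-1})$ can be written, up to a positive scalar and modulo the span equalities, in one of the forms
\[
\gamma(\cE)\mapsto \beta_{i,j}(\FF_{\dd}\cdot\cE)\qquad\text{or}\qquad \gamma(\cE)\mapsto \chi_{i,j}(\FF_{\dd}\cdot\cE),
\]
where $\dd$ is a degree sequence of codimension $k$ and $\FF_{\dd}$ is a shifted pure resolution of a Cohen--Macaulay $S$-module of type $\dd$.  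Granting this, hypothesis (2) applied to $\FF_{\dd}$, which has codimension $k$, gives $\FF_{\dd}\cdot w\in \BBQ^1(A)$; since both $\beta_{i,j}$ and $\chi_{i,j}$ are nonnegative on $\BBQ^1(A)$ (Proposition~\ref{lem:chi nonneg}), every facet functional is nonnegative on $w$, and combined with the span equalities this forces $w\in \CvbQ(\PP^{k-1})$.

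These realizations are obtained by running through the adjacency types of root sequences of dimension $k-1$ just as the proof of Theorem~\ref{thm:extremal rays refined} runs through types (i)--(v) for degree sequences, but now with the free complex held fixed and the sheaf allowed to vary.  For a facet on which a single coordinate $\gamma_{q,-j}$ vanishes, one chooses a codimension-$k$ pure resolution $\FF_{\dd}$ positioned so that, by Theorem~\ref{thm:betti numbers of pairing} and Lemma~\ref{lem:pure and supernatural}, the functional $\gamma(\cE)\mapsto\beta_{i,j}(\FF_{\dd}\cdot\cE)$ is a fixed positive multiple of the coordinate functional $\gamma_{q,-j}$; this is the transpose of the type-(ii) (and type-(i)) argument.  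For the remaining facets one uses that $\FF_{\dd}\cdot\cE_f$ is a one- or two-term complex over $A$ (again by Lemma~\ref{lem:pure and supernatural}), picks $\FF_{\dd}$ and indices $(\tau,\kappa)$ so that the facet functional becomes the Eisenbud--Schreyer functional $\langle \FF_{\dd},-\rangle_{\tau,\kappa}$, and rewrites it as $\chi_{0,\nu}(\FF_{\dd}\cdot\cE)$ via Proposition~\ref{thm:categorified:2}.  The hard part will be this matching: one must verify that the submaximal chains of root sequences of dimension $k-1$ correspond, facet by facet, to codimension-$k$ pure complexes, which in effect re-derives the Eisenbud--Schreyer facet description of the cohomology cone of vector bundles on $\PP^{k-1}$; it also uses the fact from \cite{eis-schrey2} that this cone is spanned by supernatural bundles, so that Lemma~\ref{lem:pure and supernatural} genuinely governs the extremal-ray side of the picture.
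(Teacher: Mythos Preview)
Your proposal is correct and follows essentially the same approach as the paper's own (sketched) proof: both directions use the map of cones \eqref{eqn:3cones} for (1)$\Rightarrow$(2), and for (2)$\Rightarrow$(1) both argue that the boundary facet functionals of $\CvbQ(\PP^{k-1})$ can be realized as $\gamma(\cE)\mapsto\beta_{i,j}(\FF_{\dd}\cdot\cE)$ or $\gamma(\cE)\mapsto\chi_{i,j}(\FF_{\dd}\cdot\cE)$ with $\FF_{\dd}$ pure of codimension $k$, then invoke nonnegativity of $\beta_{i,j}$ and $\chi_{i,j}$ on $\BBQ^1(A)$. The only difference is packaging: the paper simply cites \cite[Proof of Theorem~0.5]{eis-schrey1} for the facet description, whereas you propose to redo that matching by transposing the case analysis of Theorem~\ref{thm:extremal rays refined} and passing through Proposition~\ref{thm:categorified:2}.
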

\begin{proof}[Sketch of proof of Proposition~\ref{prop:other}]
The direction (1) implies (2) again follows from equation \eqref{eqn:3cones}.  For the other direction, we revisit \cite[Proof of Theorem~0.5]{eis-schrey1}, which illustrates that the study of $\CvbQ(\PP^{k-1})$ only requires functionals of two forms:
\[
\gamma(\cE)\mapsto\beta_{i,j}(\FF_\dd\cdot \cE) \qquad \text{ or } \qquad \gamma(\cE)\mapsto\chi_{i,j}(\FF_\dd\cdot \cE),
\]
where $\FF_\dd$ is a shifted pure resolution of codimension $k$.  Since $\beta_{i,j}$ and $\chi_{i,j}$ are nonnegative on $\BBQ^1(A)$, hypothesis (2) implies that all boundary functionals of $\CvbQ(\PP^{k-1})$ are nonnegative on $w$.
\end{proof}

\begin{example}\label{rmk:issues}
The above proposition fails if we replace $\CvbQ(\PP^{k-1})$ by $\CQ^{n+1-k}(\PP^n)$.  
Let $n=1$ and consider the point $w:=\sum_{i=0}^\infty \frac{1}{2^i} \gamma(\cO_{\PP^1}(-i))$.  
This infinite sum converges, and thus $w\in \WW$.  Further, equation \eqref{eqn:3cones} implies that the functional
\[
\beta(\FF)\mapsto \sum_{i=0}^\infty \tfrac{1}{2^i} \beta(\FF\cdot \cO_{\PP^n}(-i))
\]
induces a map $\BBQ^{2}(S)\to \BBQ^1(A)$.  However, a direct computation shows that
\[
\gamma_{1,j}(w)=\frac{1}{2^{j}}.
\]
Hence, no scalar multiple of $w$ can equal the cohomology table of a sheaf on $\PP^n$, since there is no scalar multiple of $w$ such that all of the entries of $w$ are integers.  Thus $w\notin \CQ^0(\PP^n)$.
\end{example}

%%%%%%%%%%%%%%%%%%%%%%%%%%%%%%%%%%%%
%%%%%%%%%%%%%%%%%%%%%%%%%%%%%%%%%%%%
\section*{\underline{{Part III: Beyond polynomial rings}}}
\section{Other $\NN$-graded rings}\label{sec:functor}
%%%%%%%%%%%%%%%%%%%%%%%%%%%%%%%%%%%%
%%%%%%%%%%%%%%%%%%%%%%%%%%%%%%%%%%%%
 In this section, $R$ will denote an $\NN$-graded $\kk$-algebra, not necessarily generated in degree 1, that is a finite extension of $S=\kk[x_0,\dots,x_n]$, with the variables $x_{i}$ having degree 1.  In other terms, $R$ may be any graded ring that admits a linear Noether normalization. 
 
 We will 
 show that various cones of Betti tables of
 bounded free complexex with finite length homology over $R$ are the same as the
 corresponding cones over $S$ in certain cases, generalizing Corollary~\ref{cor:isom cones}, 
 
An example of an algebra $R$ satisfying the conditions above may be constructed from any projective scheme $X$ and a finite morphism $f:X\to \PP^{n}$ by taking $R = \oplus_{i}H^{0}(f^{*}\cO(i))$. For instance, we might take
$R = \kk[s^{4}, s^{3}t, st^{3}, t^{4}]$, the homogeneous coordinate ring of the smooth rational
quartic in $\P^{3}$; or $R = \kk[x_{0}, x_{1}, y]/(y^{m}-g(x_{0}, x_{1}))$ where $x_{0},x_{1}$ have
degree 1, $y$ has degree 2, and the degree of $g$ is $2m$, which may be thought of as 
coming from an $m$-fold covering of the projective line.
 More generally, the inclusion $S\subseteq R$ induces a finite map $f\colon X=\Proj(R)\to \PP^n$. We set $L:=f^*\cO(1)$. 

%We will use the functor $\Phi$ to analyze the structure of bounded free complexes over $R$
%in certain cases.  
We define cohomology tables and Betti tables for $R$ as follows. If $\cE\in \DD^b(X)$ then $\gamma(\cE)\in \WW$ is the table with entries $\gamma_{i,j}(\cE)=h^i(X,\cE\otimes L^{\otimes j})$.  For any $0\leq k \leq d$, we set $\CQ^{k}(X,L)$ to be the cone spanned by cohomology tables $\gamma(\cE)$ with $\codim(\cE)\geq k$.  For a bounded free complex $\FF$ of graded $R$-modules, we define the Betti table $\beta(\FF)$ by the formulas $\beta_{i,j}(\FF)=\dim \Tor_i(\FF,\kk)_j$.  
We are interested in bounded free complexes, but since $R$ will generally fail to be a regular ring, it is no longer the case that arbitrary elements of the derived category 
$\DD^b(R)$ may be represented by a bounded free complex.  
We thus restrict attention to the subcategory $\DD^{\text{perf}}(R)\subseteq \DD^b(R)$ of perfect complexes, that is, the subcategory of elements that may be
represented by a  bounded free complex.
If $\cc$ is a codimension sequence, then we define $\BBQ^{\cc}(R)$ in parallel with the definitions from \S\ref{sec:refined}.  

%The main result in this section relates cones of Betti tables of $R$ to cones of Betti tables over $S$ when $X$ admits an Ulrich sheaf.  
%This implies Corollary~\ref{cor:isom cones}.

\begin{thm}\label{cor:new graded rings}
Let $R$ be a finite, graded extension of $S$ such that $\Proj(R)$ admits an Ulrich sheaf.  For any codimension sequence $\cc$ with $\cc_r<\infty$ for all $r$, we have  a natural isomorphism
$
\BBQ^{\cc}(S)\overset{\cong}{\to} \BBQ^{\cc}(R).
$
\end{thm}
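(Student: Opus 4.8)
The plan is to exhibit two mutually inverse maps between $\BBQ^{\cc}(S)$ and $\BBQ^{\cc}(R)$ and check that they land in the correct cones. The easy direction is pullback: given a bounded free complex $\FF$ over $S$ compatible with $\cc$, the complex $f^{*}\FF$ (equivalently $\FF\otimes_{S}R$) is a bounded free complex of graded $R$-modules, and since $f$ is finite its homology sheaves are $f^{*}\HH_{i}(\widetilde{\FF})$, so $\codim\HH_{i}(f^{*}\FF)=\codim\HH_{i}(\FF)\ge c_{i}$ and the vanishing conditions imposed by $\cc$ are preserved; moreover pullback does not change the graded Betti numbers because $\Tor^{R}_{i}(R\otimes_{S}\FF,\kk)=\Tor^{S}_{i}(\FF,\kk)$ as $\FF$ is $S$-free. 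Thus $\beta(\FF)\mapsto\beta(f^{*}\FF)=\beta(\FF)$ gives a map $\BBQ^{\cc}(S)\to\BBQ^{\cc}(R)$ which on the level of tables in $\VV$ is literally the inclusion of a subcone. So the content of the theorem is the reverse inclusion $\BBQ^{\cc}(R)\subseteq\BBQ^{\cc}(S)$.

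For the reverse inclusion I would run the same duality argument that proves Theorem \ref{thm:duality}, but with $\PP^{n}$ replaced by $X=\Proj(R)$. The point of the Ulrich hypothesis is exactly the cited fact from \cite{eis-schrey-abel}: since $\Proj(R)$ admits an Ulrich sheaf $\cU$ for $f$, the cone of cohomology tables of vector bundles on $X$ coincides with that on $\PP^{n}$, and more generally $f_{*}$ of a codimension-$k$ sheaf on $X$ (tensored with $\cU$, so that $f_{*}(\cE\otimes\cU)$ records the same cohomology table up to the multiplicity $r$) is a codimension-$k$ sheaf on $\PP^{n}$. Concretely, I would define a pairing on $R$-complexes by $\FF\cdot\cE := (f^{*}\FF|_{S})\cdot(f_{*}(\cE\otimes\cU))$ — that is, restrict scalars on $\FF$ to get an $S$-complex with the same Betti table (using that $R$ is $S$-free? it need not be, so instead I take a minimal $S$-free resolution of $\FF$ as an $S$-complex; its Betti table over $S$ may differ, so more care is needed here) — and use Theorem \ref{thm:betti numbers of pairing} to compute that the resulting $A$-complex has Betti numbers determined by $\beta(\FF)$ over $R$ and the $X$-cohomology table of $\cE$. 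The nonnegative functionals $\beta_{i,j}(\FF\cdot\cE)$ and $\chi_{i,j}(\FF\cdot\cE)$, which by Corollary \ref{cor:dualconeA refined} cut out $\BBQ^{1}(A)$, then pull back to nonnegative functionals on $\BBQ^{\cc}(R)$; since the proof of Theorem \ref{thm:extremal rays refined} shows these very functionals (as $\cE$ ranges over supernatural sheaves) cut out $\BBQ^{\cc}(S)$, any $v\in\BBQ^{\cc}(R)\subseteq\VV$ satisfies all the defining inequalities of $\BBQ^{\cc}(S)$, hence lies in it. The hypothesis $c_{r}<\infty$ for all $r$ is what guarantees that only the two families of boundary functionals above occur (no codimension-shift facets of type (i) involving the $\infty$ boundary), matching the structure needed for the argument.

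The main obstacle I anticipate is the bookkeeping around restriction of scalars from $R$ to $S$: unlike pullback, the functor "forget the $R$-structure" does not send free $R$-modules to free $S$-modules, so $\beta(\FF)$ over $R$ is not literally a Betti table over $S$, and I need to build the pairing directly for $R$-complexes. The clean fix is to define $\Phi_{R}$ intrinsically — repeat the construction of \S\ref{sec:duality pairing} with $\sigma\colon R\to R\otimes A$, $x_{i}\mapsto x_{i}t$ on the Noether-normalizing variables, forming $\tau(\FF)$ on $X\times\AA^{1}$, tensoring with $\cE\boxtimes\cO_{\AA^{1}}$, and pushing forward — and then check the analogue of Theorem \ref{thm:betti numbers of pairing} (Betti table of $\Phi_{R}(\FF,\cE)$ depends only on $\beta_{R}(\FF)$ and $\gamma_{X}(\cE)$) and of Proposition \ref{prop:exact} (exactness of $\widetilde{\FF}\otimes\cE$ forces finite-length homology). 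The Ulrich sheaf then enters to identify $\CQ^{n+1-k}(X,L)$ with $\CQ^{n+1-k}(\PP^{n})$ on cohomology tables, so that the supernatural sheaves producing the boundary functionals in the $S$-case have counterparts on $X$ giving the same numerics. Verifying these two analogues carefully, and that homological transversality (the Miller--Speyer input) still applies after a general $\GL_{n+1}$-twist on $X$, is where the real work lies; the rest is a formal transcription of the $S$-argument.
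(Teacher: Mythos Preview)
Your overall architecture is right and matches the paper: define $\Phi_{R}$ intrinsically via $\sigma\colon R\to R\otimes A$, prove the analogues of Theorem~\ref{thm:betti numbers of pairing} and Proposition~\ref{prop:exact}, use the Ulrich sheaf to identify $\CQ^{0}(X,L)$ with $\CQ^{0}(\PP^{n})$ so that supernatural cohomology tables are available on $X$, and then rerun the proof of Theorem~\ref{thm:extremal rays refined} over $R$. That is exactly what the paper does.

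The genuine gap is in your ``easy'' direction, and it is tied to your misreading of the hypothesis $c_{r}<\infty$. The assertion that ``since $f$ is finite its homology sheaves are $f^{*}\HH_{i}(\widetilde{\FF})$'' is false: finiteness makes $f_{*}$ exact, not $f^{*}$. In general $\HH_{i}(\FF\otimes_{S}R)$ is computed by a spectral sequence with $E_{2}^{r,q}=\Tor^{S}_{q}(\HH_{r}\FF,R)$, and the higher Tor terms need not vanish. The paper handles this by first passing to an infinite field (Lemma~\ref{lem:ground field}) and then replacing $\FF$ by a general $\GL_{n+1}$-translate so that, by Miller--Speyer, $\FF$ becomes homologically transverse to $R$; Lemma~\ref{lem:pushpull} then shows $Lf^{*}\FF$ is compatible with $\cc$. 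The hypothesis $c_{r}<\infty$ is used \emph{here}, not in the functional analysis: transversality only forces the higher Tor terms to have finite length, so they can still contribute finite-length pieces to $\HH_{r}(\FF\otimes_{S}R)$, and one needs $c_{r}\le n+1$ for this not to violate compatibility. When some $c_{r}=\infty$ the inclusion $\BBQ^{\cc}(S)\subseteq\BBQ^{\cc}(R)$ can fail outright---e.g.\ for $\cc=(\dots,\nothing,n{+}1^{\circ},\infty,\dots)$ and $R$ not Cohen--Macaulay, Auslander--Buchsbaum forbids any perfect $R$-complex with that Betti shape, so $\BBQ^{\cc}(R)=0$ while $\BBQ^{\cc}(S)\neq 0$. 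Your explanation that $c_{r}<\infty$ is there to rule out certain boundary facets is therefore off target.
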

Our assumption that $c_r<\infty$ (or equivalently that $c_r\leq n+1)$ is critical.  For instance, if $R$ is not Cohen--Macaulay and if $\FF$ is the resolution of the any finite length $S$-module, then the Auslander--Buchsbaum Theorem implies that there cannot exist any free resolution $\Gbull$ over $R$ whose Betti table is a scalar multiple of the Betti table of $\FF$.

We next define a pairing $\Phi_{R}$:
\[
\xymatrix{\DD^{\text{perf}}(R)\times \DD^{b}(X)  \ar[r]^-{\Phi_{R}}&\DD^{b}(A).}
\]
If $\FF \in \DD^{\text{perf}}(R)$ then we use the notation $\widetilde{\FF}$ to denote the corresponding complex of coherent sheaves on $X$.
Let $\sigma: R\to R\otimes A=R[t]$ be the map of rings defined by $\sigma(r)=rt^{\deg(r)}$, and
write $-\otimes_\sigma R[t]$ to denote tensoring over $R$ with $R[t]$ using the structure
given by $\sigma$. 

If $F$ is a graded  $R$-module, then 
$
F\otimes_{\sigma} S[t]
$
is a bigraded $S[t]$ module.
Thus we may define a functor $\tau$ on derived
categories that takes a graded complex of free $R$-modules $\FF$ to
\[
\tau(\FF): =\widetilde \FF \otimes_{\sigma}\cO_{X\times \AA^{1}},
\]
a complex of graded sheaves on $X\times \AA^{1}$, with the grading coming from the coordinate $t$ on $\AA^{1}$.
We let $p_2: X\times \AA^1\to \AA^1$ be the projection map, and we define $\Phi_{R}$ in parallel to the definition of $\Phi$:
\begin{defn}
Given $\FF\in \DD^{\text{perf}}(R)$ and $\cE\in \DD^b(X)$ we define
\[
\Phi_{R}(\FF,\cE):=Rp_{2*}\left( \tau(\FF)\otimes _{X\times \AA^1}\left( \cE\boxtimes \cO_{\AA^1}\right)\right).
\]
\end{defn}
As above, we often omit $\Phi_{R}$ from the notation and  write $\FF\cdot \cE$ for $\Phi_{R}(\FF,\cE)$.  We immediately obtain an analogue of Theorem~\ref{thm:Phi} for the functor $\Phi_{R}$.
\begin{theorem}\label{thm:PhiX}
If $\FF$ is a bounded complex of free graded $R$-modules and $\cE$ is a bounded complex of coherent sheaves on $X$ then:
\begin{enumerate}
	\item\label{thm:PhiX:1}  The Betti table of $\Phi_{R}(\FF,\cE)$ depends only on the Betti table of $\FF$ and the cohomology table of $\cE$.
	\item\label{thm:PhiX:2}  If $\widetilde{\FF}\otimes \cE$ is exact, then $\Phi_{R}(\FF,\cE)$ is generically exact.  
\end{enumerate}
\end{theorem}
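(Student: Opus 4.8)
The plan is to prove both statements by transplanting, with only cosmetic changes, the proofs already given for $R=S$: assertion~\eqref{thm:PhiX:1} will run parallel to Theorem~\ref{thm:betti numbers of pairing}, and assertion~\eqref{thm:PhiX:2} parallel to Proposition~\ref{prop:exact}. The one genuinely new input is geometric: since $f\colon X\to\PP^n$ is finite it is in particular affine, so the standard affine cover $\{U_i\}$ of $\PP^n$ pulls back to an affine cover $\{f^{-1}(U_i)\}$ of $X$, which is exactly what a Cech cohomology computation on $X$ requires. I would also record at the outset that $\cO_X(1)$ is canonically isomorphic to $L=f^*\cO_{\PP^n}(1)$ --- both are trivialized over the opens $f^{-1}(U_i)$, with transition functions $x_i/x_j$ --- so that $\widetilde{R(-j)}\cong L^{\otimes -j}$ and the twist bookkeeping matches the polynomial ring case verbatim.

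For~\eqref{thm:PhiX:1}, first I would replace $\FF$ by a minimal free complex, supported (after a homological shift) in nonnegative degrees; this is harmless, since $\Phi_R$ is a functor on the derived category and the Betti table of a perfect complex is read off a minimal free representative. Then I would form the double complex $C_{\bullet,\bullet}$ on $X\times\AA^1$ whose $i$-th column is the Cech resolution of $\tau(\FF_i)\otimes_{X\times\AA^1}(\cE\boxtimes\cO_{\AA^1})$ for the cover $\{f^{-1}(U_i)\times\AA^1\}$ and whose rows come from the differentials of $\tau(\FF)$. The crucial structural point, exactly as over $S$: in matrix form the vertical maps have entries of bidegree $(0,0)$, while the horizontal maps have entries that are bihomogeneous of strictly positive bidegree in each factor --- this uses minimality of $\FF$ together with the defining formula $\sigma(r)=rt^{\deg r}$, which multiplies each positive-degree matrix entry of a differential by a positive power of $t$. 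Since $\Tot(C_{\bullet,\bullet})$ is a bounded complex of flat $A$-modules quasi-isomorphic to $\FF\cdot\cE$ --- each of its terms is the module of sections of the relevant sheaf over an affine open of $X\times\AA^1$, hence a free $\kk[t]$-module --- reduction modulo $t$ kills the horizontal maps, so one of the two spectral sequences degenerates and $\Tor_i^A(\FF\cdot\cE,A/(t))\cong\bigoplus_j\HH^j_{\text{vert}}(C_{i+j,\bullet})$. Computing the vertical Cech homology of $\tau(\FF_p)\otimes(\cE\boxtimes\cO_{\AA^1})\cong\bigoplus_j(\cE\otimes L^{\otimes -j})^{\beta_{p,j}(\FF)}\boxtimes A(-j)^{\beta_{p,j}(\FF)}$ then yields
\[
\beta_{i,j}(\FF\cdot\cE)=\sum_{p-q=i}\beta_{p,j}(\FF)\,\gamma_{q,-j}(\cE),
\]
which visibly depends only on $\beta(\FF)$ and $\gamma(\cE)$; this proves~\eqref{thm:PhiX:1} and, along the way, the $R$-analogue of Theorem~\ref{thm:betti numbers of pairing}.

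For~\eqref{thm:PhiX:2}, as in Proposition~\ref{prop:exact} it suffices to show that the homology of $\Phi_R(\FF,\cE)$ is annihilated by a power of $t$, so I would invert $t$. After inverting $t$ the map $\sigma$ becomes the ordinary inclusion $R\hookrightarrow R[t,t^{-1}]$ followed by the automorphism of $R[t,t^{-1}]$ that fixes $t$ and sends each homogeneous $r$ to $rt^{\deg r}$; since this automorphism lies over the identity of $\Spec A[t^{-1}]$, the complex $\tau(\FF)\otimes(\cE\boxtimes\cO_{\AA^1})$ restricted to $X\times\Spec A[t^{-1}]$ is isomorphic to the pullback of $\widetilde{\FF}\otimes\cE$, which is exact by hypothesis. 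Applying $Rp_{2*}$, and then flat base change along $\Spec A[t^{-1}]\hookrightarrow\Spec A$, identifies the restriction of $\Phi_R(\FF,\cE)$ to $\Spec A[t^{-1}]$ with an exact complex; hence the homology of $\Phi_R(\FF,\cE)$ is annihilated by a power of $t$, and being finitely generated over $\kk[t]$ it has finite length, i.e.\ $\Phi_R(\FF,\cE)$ is generically exact.

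I do not expect a serious obstacle here: the theorem is an almost mechanical transplant of the two results from Part~I, and the only points requiring care are bookkeeping --- checking that $\Proj R$ is covered by the affine opens $f^{-1}(U_i)$ and that $\cO_X(1)\cong L$ so that the cohomology table indices are the correct ones, and checking that the flatness over $A=\kk[t]$ used in~\eqref{thm:PhiX:1} still holds when $\cE$ is merely a bounded complex of coherent sheaves rather than a single vector bundle (it does, term by term). If anything is delicate, it is only being careful that every twist and every Cech index lands where the corresponding one did over $S$.
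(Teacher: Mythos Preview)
Your proposal is correct and follows essentially the same route as the paper: the paper's own proof simply says that part~\eqref{thm:PhiX:1} is a minor adaptation of Theorem~\ref{thm:betti numbers of pairing} and part~\eqref{thm:PhiX:2} follows from the proof of Proposition~\ref{prop:exact}, which is precisely what you spell out in detail. Your added care about the affine cover $\{f^{-1}(U_i)\}$ and the identification $\widetilde{R(-j)}\cong L^{\otimes -j}$ is exactly the bookkeeping the paper leaves implicit.
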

\begin{proof}
Statement \eqref{thm:PhiX:1} follows by a minor adaptation of the proof from Theorem~\ref{thm:betti numbers of pairing} above.  Statement \eqref{thm:PhiX:2} follows by applying the proof of Proposition~\ref{prop:exact}
\end{proof}

We would like to pull back a Betti table from $S$ to $R$ in a way that is compatible with codimension sequences.  This requires the following definition.

\begin{defn}
For any $S$-algebra $R$, we say that $\FF\in \DD^b(S)$ \defi{ is homologically transverse to $R$} if we have:
\begin{itemize}
	\item $\Tor^S_q(\HH_r(\FF),R)$ has finite length for $q>0$ and for all $r$, and
	\item $\codim_S \HH_r(\FF)\otimes_S R\geq \min\{n+1, \codim_S \HH_r({\FF})+\codim_S R\}.$
\end{itemize}
\end{defn}

 \begin{lemma}\label{lem:pushpull}
Fix a codimension sequence $\cc$ with $c_r<\infty$ for all $r$, and let $\FF\in \DD^b(S)$ be compatible with $\cc$.  If $\FF$ is homologically transverse to $R$, then the left derived pullback $Lf^*(\FF)$ is compatible with $\cc$.
\end{lemma}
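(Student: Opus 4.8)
The plan is to track what the hypotheses give us, spectral-sequence by spectral-sequence. First I would unwind the definition of $Lf^*$ on a minimal free complex: since $\FF$ is (represented by) a bounded minimal free complex over $S$, the pullback $Lf^*(\FF)$ is just $\FF\otimes_S R$, a bounded free complex over $R$ with the same Betti table. So the only thing to check is the statement about the \emph{homology} of $\FF\otimes_S R$: that $\codim_R \HH_i(\FF\otimes_S R)\ge c_i$ for all $i$, and that $(\FF\otimes_S R)_i=0$ whenever $c_i=\nothing$. The second condition is immediate because $\FF_i=0$ in that case and pullback is termwise. For the first, I would use the hyperhomology spectral sequence
\[
E^2_{p,q}=\Tor^S_q(\HH_p(\FF),R)\Rightarrow \HH_{p+q}(\FF\otimes_S R),
\]
which exists because $\FF$ is a bounded complex of flat $S$-modules and so $\FF\otimes_S R$ computes $\FF\Lotimes_S R$.

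The key step is to bound the codimension of each $E^2_{p,q}$ using homological transversality, and then to argue that this bound survives passage to the abutment. By the first bullet in the definition of homological transversality, $\Tor^S_q(\HH_p(\FF),R)$ has finite length for $q>0$; since $c_r<\infty$ for all $r$ but could exceed $n+1-\dim R$... wait — I should be careful here. Finite length means codimension $n+1$ over $S$, hence codimension $\dim R$ over $R$ (as $R$ is finite over $S$, $\codim$ is preserved), and $\dim R = n+1 > c_r$ is not automatic. So for $q>0$ the finite-length term has $R$-codimension $\dim R \ge n+1 \ge \ldots$; since every $c_r$ is an integer in $\{0,\dots,n+1\}$ — actually wait, for $A$ it's $\{0,1\}$ and for $S$ it's $\{0,\dots,n+1\}$ — I need $\dim R \ge c_r$. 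Since $\dim R = \dim S = n+1$ and $c_r \le n+1$ always, this holds. Good: the $q>0$ rows contribute only terms of codimension $\ge c_r$ for every relevant $r$ automatically. For the $q=0$ row, $E^2_{p,0}=\HH_p(\FF)\otimes_S R$, and the second bullet of homological transversality together with $\codim_S R = 0$ (since $R$ is a finite, hence equidimensional-over-$S$, extension — actually $\codim_S R=0$ as $R$ surjects onto nothing smaller) gives $\codim_R(\HH_p(\FF)\otimes_S R)=\codim_S(\HH_p(\FF)\otimes_S R)\ge \codim_S\HH_p(\FF)\ge c_p$, using compatibility of $\FF$ with $\cc$.

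Finally I would assemble these bounds. Each $\HH_n(\FF\otimes_S R)$ has a finite filtration whose graded pieces are subquotients of $E^\infty_{p,q}$ with $p+q=n$, and each $E^\infty_{p,q}$ is a subquotient of $E^2_{p,q}$. For the relevant index $n$: when $q>0$ the piece has $R$-codimension $n+1 \ge c_n$; when $q=0$ (so $p=n$) the piece has $R$-codimension $\ge c_n$ by the previous paragraph. Since codimension of a module is the minimum of the codimensions of the graded pieces in any finite filtration (equivalently, $\supp$ is the union of the supports), we get $\codim_R\HH_n(\FF\otimes_S R)\ge c_n$, which is exactly compatibility with $\cc$. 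The main obstacle I anticipate is bookkeeping: making sure the finite-length ($q>0$) contributions really do meet the codimension demand for \emph{every} index $n$ — this is where the hypothesis $c_r<\infty$ (equivalently $c_r\le n+1$, so that finite length suffices) is used — and verifying that codimension is controlled by a finite filtration, which is a standard fact about supports but should be stated cleanly. The flatness/perfectness subtleties (that $Lf^*$ of a bounded free complex is computed by the naive tensor product) are routine but worth a sentence.
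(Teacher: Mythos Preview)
Your proof is correct and follows essentially the same approach as the paper: both represent $\FF$ by a bounded free complex so that $Lf^*\FF=\FF\otimes_S R$, invoke the spectral sequence $E^2_{p,q}=\Tor^S_q(\HH_p(\FF),R)\Rightarrow \HH_{p+q}(\FF\otimes_S R)$, bound the $q>0$ terms by finite length and the $q=0$ term via the transversality codimension estimate, and conclude using $c_r\le n+1$. You spell out more of the bookkeeping (the filtration argument, the equality $\codim_R=\codim_S$ for finite extensions) than the paper does, but the argument is the same.
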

\begin{proof}
If we represent $\FF$ by a free complex, then we have $Lf^*\FF=f^*\FF=\FF\otimes_S R$.   We consider the spectral sequence:
\[
E_{r,q}^2=\Tor^S_{q}(\HH_r(\FF),R)\Rightarrow \HH_{q+r}(\FF\otimes_S R).
\]
Since $c_i\leq n+1$ for all $i$, and since $E^{r,q}_2$ has finite length for all $q>0$ and all $r$, it follows that
\begin{align*}
\codim_R \left(\HH_{r}(\FF\otimes_S R)\right) &\geq \min\{n+1, \codim_R\left( \HH_r(\FF)\otimes_S R\right)\} \\
&\geq \min\{n+1, \codim_S \HH_r(\FF)\}\\
& \geq c_r.
\end{align*}
Thus, $f^*\FF$ is compatible with $\cc$, as claimed.
\end{proof}

For homological transversality arguments we need an infinite field. The next result shows that
field extensions do not change the cones of rational Betti tables (we do not know whether they change the semigroups of Betti tables themselves.)

\begin{lemma}\label{lem:ground field}
Let $\cc$ be any codimension sequence for $R$ and let $K$ be a field extension of $\kk$.  Then there is a natural isomorphism $\BBQ^{\cc}(R)\cong\BBQ^{\cc}(R\otimes_{\kk}K)$.
\end{lemma}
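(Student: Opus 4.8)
The plan is to show that $\BBQ^{\cc}(R)$ and $\BBQ^{\cc}(R\otimes_{\kk}K)$ are \emph{literally the same} subcone of $\VV$, the containment $\BBQ^{\cc}(R)\subseteq\BBQ^{\cc}(R\otimes_{\kk}K)$ being induced by base change along the flat map $R\to R\otimes_{\kk}K$. For this easy containment I would simply check that if $\FF$ is a bounded free $R$-complex compatible with $\cc$ then so is $\FF\otimes_{\kk}K$, with the same Betti table: flatness of $K$ over $\kk$ gives $\Tor^{R\otimes_{\kk}K}_i(\FF\otimes_{\kk}K,K)\cong\Tor^{R}_i(\FF,\kk)\otimes_{\kk}K$ and $\HH_r(\FF\otimes_{\kk}K)\cong\HH_r(\FF)\otimes_{\kk}K$, while for every finitely generated $R$-module $M$ one has $\dim_{R\otimes_{\kk}K}(M\otimes_{\kk}K)=\dim_{R}M$ and $\dim(R\otimes_{\kk}K)=\dim R$ (both by base-changing a Noether normalization), so codimensions of homology modules, and vanishing of homological positions, are preserved.

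The content is the reverse containment. I would take a bounded free complex $\Gbull$ over $R\otimes_{\kk}K$ that is compatible with $\cc$ and produce a positive integer multiple of $\beta(\Gbull)$ as the Betti table of a $\cc$-compatible bounded free $R$-complex. First I would spread $\Gbull$ out: only finitely many elements of $K$ occur among the entries of the differentials of $\Gbull$, so, letting $B$ be the finitely generated $\kk$-subalgebra of $K$ that they generate (a domain), $\Gbull$ is the base change along $B\hookrightarrow K$ of a bounded free complex $\widehat\Gbull$ over $R\otimes_{\kk}B$. Next I would specialize at a closed point of $\Spec B$. Three inputs are needed: upper semicontinuity of the fiberwise Betti numbers together with the fact that they take the value $\beta(\Gbull)$ at the generic point of $\Spec B$; generic flatness of the finitely many homology modules $\HH_r(\widehat\Gbull)$ over $B$, which makes homology commute with passage to a fiber; and generic equidimensionality of the fibers of $\operatorname{Supp}\HH_r(\widehat\Gbull)\to\Spec B$. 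Using these I would shrink $\Spec B$ to a dense open over which every fiber of $\widehat\Gbull$ has Betti table equal to $\beta(\Gbull)$ and is compatible with $\cc$ (the latter again using $\dim(R\otimes_{\kk}\kappa(\mathfrak p))=\dim R$). Choosing a maximal ideal $\mathfrak m$ in this open, its residue field $\kappa$ is, by the Nullstellensatz, a finite extension of $\kk$ of some degree $d$, and $\widehat\Gbull\otimes_{B}\kappa$ is a $\cc$-compatible bounded free complex over $R\otimes_{\kk}\kappa$ with Betti table $\beta(\Gbull)$.

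Finally I would apply restriction of scalars along the finite extension $R\hookrightarrow R\otimes_{\kk}\kappa$ of graded rings, which is free of rank $d=[\kappa:\kk]$ with $\kappa$ sitting in internal degree $0$. This turns $\widehat\Gbull\otimes_{B}\kappa$ into a bounded free $R$-complex whose Betti table is $d\cdot\beta(\Gbull)$ and which is still compatible with $\cc$, since restriction of scalars along a finite extension changes neither the dimension of a module nor the dimension of the ring, and preserves the vanishing of any homological degree. As $\BBQ^{\cc}(R)$ is a cone and $d$ is a positive integer, this yields $\beta(\Gbull)\in\BBQ^{\cc}(R)$, and hence $\BBQ^{\cc}(R)=\BBQ^{\cc}(R\otimes_{\kk}K)$, which is the asserted natural isomorphism.

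I expect the specialization step to be the main obstacle: one must exhibit a \emph{single} dense open of $\Spec B$ over which, simultaneously, the fiberwise Betti table equals that of the generic fiber (only semicontinuous, so this uses that the generic value is the minimum) and every fiber remains $\cc$-compatible — which forces one to arrange both that homology commutes with specialization (via generic flatness of all the $\HH_r(\widehat\Gbull)$) and that the fibers of their supports have the generic dimension. The surrounding bookkeeping, namely that $\dim$ and $\codim$ are unaffected by extension of the ground field and by finite restriction of scalars, is elementary but is exactly what makes the two reductions — spreading out, and restriction of scalars — compatible with the codimension sequence $\cc$.
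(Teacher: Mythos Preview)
Your proof is correct and follows essentially the same strategy as the paper's: both obtain the easy inclusion via flat base change, then handle the reverse inclusion by spreading out over a finitely generated $\kk$-algebra, specializing to a closed point with finite residue field, and applying restriction of scalars along the resulting finite graded extension. The paper organizes the reduction by first treating finite extensions, then algebraic ones, then inducting down to transcendence degree one before spreading out over $\kk[z]$, whereas you spread out over a general finitely generated $B\subseteq K$ in one step; your version is slightly more streamlined (and more explicit about why the specialization locus is open), but the content is the same.
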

\begin{proof}
Since $\kk\subseteq K$ is flat, we have an inclusion $\BBQ^{\cc}(R)\subseteq\BBQ^{\cc}(R\otimes_{\kk}K)$ induced by $\beta(\FF)\mapsto \beta(\FF\otimes_{\kk} K)$. Thus it suffices to
show that if $\FF$ is a bounded free complex over $R\otimes_{\kk}K$ compatible with $\cc$,
 then there is bounded free complex over $R$ that is compatible with $\cc$ and whose
 Betti table is a multiple of $\beta(\FF)$.
 
If $K$ is a finite extension of $\kk$, then since $R\otimes_{\kk}K$ is a finitely generated 
free module over $R$, the complex $\FF$, regarded as a complex of $R$-modules,
has Betti table a multiple of that of $R$.  (The multiple is $[K:\kk].$)

If $K$ is algebraic over $\kk$, then since the maps of $\FF$ can be written in terms of finitely
many matrices of finite size, $\FF$ is actually defined over a finite extension of $\kk$, so we
are reduced to the case of a finite extension above.

It now suffices to treat the case of a transcendental extension, and by the same argument as
above, we may assume that the transcendence degree is finite. Induction then reduces the
problem to the case of transcendence degree 1.

Let $\FF\in \DD^b(R\otimes_{\kk}\kk(z))$ be a free complex that is compatible with $\cc$.  Clearing denominators, we may extend $\FF$ to a complex over $\kk[z][x_0,\dots,x_n]$.  The locus of $\Spec(\kk[z])$ where the specialization of $\FF$ fails to be compatible with $\cc$ is  closed.  Since $\FF$ is compatible with $\cc$ over the generic point, it follows that there is a closed point $P\in \Spec(\kk[z])$ such that the restriction of $\FF$ to $P$ is still compatible with $\cc$.  Since the residue field of $P$ is finite over $\kk$, we are again reduced to the case of a finite extension.\end{proof}

The final ingredient for our proof of Theorem~\ref{cor:new graded rings} is the following lemma, which shows how the functors $\Phi$ and $\Phi_{R}$ interact with the map $f$.

\begin{lemma}\label{lem:general pairing}
Let $\FF\in \DD^b(S)$ be homologically transverse to $R$ and let $\cE\in \DD^b(\PP^n)$.  Then
\[
\beta(Lf^*\FF\cdot \cE)=\beta(\FF\cdot f_*\cE).
\]
\end{lemma}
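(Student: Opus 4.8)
The plan is to deduce the statement from the sharper fact that $\Phi_R(Lf^*\FF,\cE)$ and $\Phi(\FF,f_*\cE)$ are quasi-isomorphic as complexes of graded $A$-modules; equality of Betti tables then follows immediately. This quasi-isomorphism is, in essence, the projection formula for the finite map $f$, once one checks that the twisting functors $\tau$ entering $\Phi$ and $\Phi_R$ (written $\tau_S$ and $\tau_R$ below) are intertwined by $f$. The first step is to represent $\FF\in\DD^b(S)$ by a bounded complex of finitely generated graded free $S$-modules; then $Lf^*\FF=\FF\otimes_S R$ is the corresponding bounded complex of graded free $R$-modules (free modules being flat, no higher $\Tor$'s appear), and both $\tau_S(\FF)$ and $\tau_R(Lf^*\FF)$ are ordinary, non-derived, termwise twists, as in Definition~\ref{defn:product} and its $R$-variant above.

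The crux is the interaction of $\tau_S$, $\tau_R$ and $f$. Because $S\subseteq R$ is a \emph{linear} Noether normalization, i.e., $\deg(x_i)=1$, the homomorphism $\sigma_R\colon R\to R[t]$, $r\mapsto rt^{\deg r}$, restricts on $S$ to $\sigma_S\colon x_i\mapsto x_it$; equivalently, the square with edges $S\hookrightarrow R$, $\sigma_S$, $\sigma_R$, $S[t]\hookrightarrow R[t]$ commutes. Let $g\colon X\times\AA^1\to\PP^n\times\AA^1$ be the finite morphism induced by $S[t]\hookrightarrow R[t]$; identifying $X\times\AA^1=\Proj(R[t])$, one has $g=f\times\id_{\AA^1}$. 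Since
\[
(\FF\otimes_S R)\otimes_{\sigma_R}R[t]\;=\;(\FF\otimes_{\sigma_S}S[t])\otimes_{S[t]}R[t]
\]
as bigraded $R[t]$-modules — both equal $\FF\otimes_S R[t]$ with $S$ acting through $x_i\mapsto x_it$, which is exactly the commutativity just noted — sheafification on $X\times\AA^1$ gives
\[
\tau_R(Lf^*\FF)\;=\;g^*\bigl(\tau_S(\FF)\bigr),
\]
compatibly with the grading by $t$; no derived functor intervenes, since both sides are termwise pullbacks of line bundles.

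Given this, the computation is short. Write $q\colon X\times\AA^1\to\AA^1$ and $p_2\colon\PP^n\times\AA^1\to\AA^1$ for the projections, so $q=p_2\circ g$. Since $g=f\times\id$ is affine and $\tau_S(\FF)$ is termwise locally free, the projection formula applies term by term, $g_*\bigl(g^*\tau_S(\FF)\otimes(\cE\boxtimes\cO_{\AA^1})\bigr)=\tau_S(\FF)\otimes g_*(\cE\boxtimes\cO_{\AA^1})$; moreover $Rg_*=g_*$ (affine), and flat base change along the projection $\PP^n\times\AA^1\to\PP^n$ gives $g_*(\cE\boxtimes\cO_{\AA^1})=(f_*\cE)\boxtimes\cO_{\AA^1}$ (likewise $Rf_*=f_*$). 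Hence
\[
\Phi_R(Lf^*\FF,\cE)=Rp_{2*}Rg_*\bigl(g^*\tau_S(\FF)\otimes(\cE\boxtimes\cO_{\AA^1})\bigr)=Rp_{2*}\bigl(\tau_S(\FF)\otimes((f_*\cE)\boxtimes\cO_{\AA^1})\bigr)=\Phi(\FF,f_*\cE),
\]
and every identification respects the $A=\kk[t]$-grading, so $\beta(Lf^*\FF\cdot\cE)=\beta(\FF\cdot f_*\cE)$. (The homological-transversality hypothesis plays no role in this identity as such; it matters only when the conclusion is fed into Lemma~\ref{lem:pushpull}.) The only genuine content is the compatibility $\sigma_R|_S=\sigma_S$ of the second step — this is where linearity of the Noether normalization is used — and the main practical obstacle is keeping the two gradings straight (the $R$-grading resp.\ $S$-grading, and the auxiliary $t$-grading that makes the output an $A$-module) through $\tau$, the projection formula, and base change. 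That bookkeeping is cleanest on the quotient stacks $[\AA^{n+1}/\GG_m]$, $\PP^n\times[\AA^1/\GG_m]$, $X\times[\AA^1/\GG_m]$ of the stacky reformulation following Definition~\ref{defn:product}, where $g$ becomes $f\times\id_{[\AA^1/\GG_m]}$ and the displayed identities are the standard projection-formula and flat-base-change statements.
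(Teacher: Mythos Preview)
Your argument is correct and in fact proves more than the paper does: you establish a genuine quasi-isomorphism $\Phi_R(Lf^*\FF,\cE)\simeq\Phi(\FF,f_*\cE)$ in $\DD^b(A)$, from which the equality of Betti tables is immediate. The paper instead argues purely numerically: it observes that $\beta(Lf^*\FF)=\beta(\FF)$ (free modules pull back to free modules with the same twists) and $\gamma(\cE)=\gamma(f_*\cE)$ (because $f$ is finite and $L=f^*\cO(1)$, so $H^i(X,\cE\otimes L^j)=H^i(\PP^n,f_*\cE(j))$), and then invokes the explicit formula of Theorem~\ref{thm:betti numbers of pairing} and its $R$-analogue in Theorem~\ref{thm:PhiX}\eqref{thm:PhiX:1}, which express the Betti numbers of the pairing solely in terms of these two tables. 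Your route is more conceptual and yields a sharper statement, while the paper's is a two-line appeal to results already in hand; both are valid, and you are right that the homological-transversality hypothesis is not used in either argument (it is there only for the subsequent application via Lemma~\ref{lem:pushpull}). Incidentally, both proofs make clear that the hypothesis ``$\cE\in\DD^b(\PP^n)$'' in the statement should read ``$\cE\in\DD^b(X)$'', since one forms $f_*\cE$.
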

\begin{proof}
We first note that $\beta(Lf^*\FF)$ equals $\beta(\FF)$.  We also have:
\[
\gamma_{i,j}(\cF)=h^i(X,\cF\otimes L^{j}) =h^i(\PP^n,f_*(\cF\otimes L^{j}))=h^i(\PP^n,(f_*\cF)\otimes \cO_{\PP^n}(j)) = \gamma_{i,j}(f_*\cF),
\]
where the first equality is by definition, the second uses that $f$ is finite so $f_*$ is exact, the third uses the projection formula and that $L^j=f^*\cO_{\PP^n}(j)$, and the fourth is by definition.
Hence $\gamma(\cE)$ equals $\gamma(f_*(\cE))$.  Theorem~\ref{thm:betti numbers of pairing} then provides a closed formula for the Betti numbers of $\Phi(\FF,f_*\cE)$ in terms of $\beta(\FF)$ and $\gamma(f_*(\cE))$.  As noted in the proof of Theorem~\ref{thm:PhiX}\eqref{thm:PhiX:1}, the same formula relates the Betti numbers of $\Phi_{R}(Lf^*\FF,\cE)$ with  $\beta(Lf^*\FF)$ and $\gamma(\cE)$.
\end{proof}

\begin{proof}[Proof of Theorem~\ref{cor:new graded rings}]
The central argument in~\cite[Proof of Theorem 5]{eis-schrey-abel} shows that the cones
$\CQ^0(X,L)$ and $\CQ^{0}(\PP^n)$ are the same;
 although $L$ was assumed to be very ample in that proof, the  argument works perfectly just assuming that $f$ is finite.

Turning to the cones of Betti tables, 
we may assume that $\kk$ is infinite by Lemma~\ref{lem:ground field}. Suppose that $\FF$ is a bounded
free complex such that $\beta(\FF)\in \BBQ^{\cc}(S)$.
By \cite[Theorem, p.\ 335]{miller-speyer}, the general translate of $\FF$ is homologically transverse to $R$,
so $\beta(f^{*}\FF) = \beta(Lf^{*}(\FF))\in \BBQ^{\cc}(R)$ by Lemma~\ref{lem:general pairing}. Thus
$Lf^{*}$ induces a map of cones $\BBQ^{\cc}(S)\to \BBQ^{\cc}(R)$.
As a map of vector spaces, $Lf^*\colon \VV\to \VV$ 
is the identity map, and thus 
$
Lf^*(\BBQ^{\cc}(S))\subseteq \BBQ^{\cc}(R).
$
It follows that $\BBQ^{\cc}(R)$ contains pure free complexes just as $Lf^*(\BBQ^{\cc}(S))$ does,

To show that  $\BBQ^{\cc}(R)$ is no larger than $Lf^*(\BBQ^{\cc}(S))$, it suffices to prove
a decomposition result analogous to Theorem~\ref{thm:extremal rays refined}. 
But the proof of Theorem~\ref{thm:extremal rays refined} in the case of $\BBQ^{\cc}(S)$ works for $\BBQ^{\cc}(R)$ as well.
Indeed, Lemma~\ref{lem:combinatorial} boils down to an analysis of degree sequences and pure Betti tables that are compatible with $\cc$; these arguments make no reference to the ring $S$, and hence they go through unchanged.  
The other key inputs for the proof of Theorem~\ref{thm:extremal rays refined} are the positivity results derived from Theorem~\ref{thm:Phi} and Lemma~\ref{lem:refined positivity}.  Theorem~\ref{thm:PhiX} plays the role of Theorem~\ref{thm:Phi}, and we can mimic the proof of Lemma~\ref{lem:refined positivity} to get an analogue for $R$-complexes that are compatible with $\cc$.
\end{proof}

\begin{example}[Curves]\label{ex:curves} Let $C$ be a projective curve, and let $L$ be any line bundle on $C$ that is generated by global sections.
Let $R$ be any homogeneous subring of $R' := \oplus_{i}H^{0}(L^{\otimes i})$ such that $R$ 
contains sections of $L$ that generate $L$. (For example, $R$ may fail to be Cohen-Macaulay or fail to be generated in degree $1$.)  We claim that $\BBQ^2(R)=\BBQ^2(\kk[x_0,x_1])$. To see this note first
that by Lemma~\ref{lem:ground field} we may assume that the ground field is infinite. It follows that there are
two sections $x_{0},x_{1}$ of degree 1 in $R$ that generate $L$, so $R$ is a finite module over $S = \kk[x_{0},x_{1}].$
By~\cite[Theorem~4.3]{eis-schrey-chow} the finite map $f: C\to \PP^{1}$ induced by $x_{0}, x_{1}$ admits
an Ulrich sheaf (one can take a general line bundle of degree genus$(C_{1})-1$ on a reduced component $C_{1}$ of $C$)
so we may apply Theorem~\ref{cor:new graded rings}.
\end{example}

\begin{example}[K3 surface in $\PP^5$]\label{ex:K3}
Let $R$ be the homogeneous coordinate ring of a K3 surface in $X\subseteq \PP^5$ that is the complete intersection of three quadrics. 
Since $X$ is a complete intersection, it admits an Ulrich bundle~\cite[Theorem~2.5]{herzog-ulrich-backelin}, and thus $\BBQ^3(R)=\BBQ^3(S)$.  It then follows from Example~\ref{ex:1441} that there cannot exist an exact sequence of the form:
\[
\cO_X\longleftarrow \begin{matrix}  \cO_X(-3)^4\\ \oplus\\ \cO_X(-4)^4\end{matrix}\longleftarrow \begin{matrix} \cO_X(-4)^4\\ \oplus\\ \cO_X(-5)^4\end{matrix} \longleftarrow \cO_X(-8)\longleftarrow 0.
\]
\end{example}

%\begin{remark}
Eisenbud and Schreyer have conjectured that, when $L$ is very ample, every projective scheme admits an Ulrich sheaf~\cite[p. 543]{eis-schrey-chow}.  Corollary~\ref{cor:isom cones} provides a method for producing a counterexample.  Namely, imagine that for some $f\colon X\to \PP^n$ finite, we can produce a free complex $\FF$ of $R=\oplus_{e\in \NN} H^0(X,L^{\otimes e})$ modules such that $\beta(\FF)\in \BBQ^{n+1}(R)$ but where $\beta(\FF)$ does not decompose as a sum of shifted pure tables of codimension $n+1$.  This would imply that $\BBQ^{n+1}(R)\supsetneq \BBQ^{n+1}(S)$ and hence, by the contrapositive of Corollary~\ref{cor:isom cones}, this would imply that $X$ does not admit an Ulrich sheaf.
%\end{remark}

%%%%%%%%%%%%%%%%%%%%%%%%%%%%%%%%%%%%
%%%%%%%%%%%%%%%%%%%%%%%%%%%%%%%%%%%%
\section{Infinite resolutions}\label{sec:infinite}
%%%%%%%%%%%%%%%%%%%%%%%%%%%%%%%%%%%%
%%%%%%%%%%%%%%%%%%%%%%%%%%%%%%%%%%%%
As an application of our results from \S\ref{sec:functor}, we will show that the Betti tables of certain infinite free resolutions decompose as infinite sums of finite pure Betti tables.

\begin{cor}\label{cor:decomp infinite}
Suppose that $R$ satisfies the hypotheses of Theorem~\ref{cor:new graded rings}. If $\FF$ is a possibly infinite minimal free resolution of a finite length, graded $R$-module, then $\beta(\FF)$ decomposes as a nonnegative, rational, possibly infinite sum of the Betti tables of pure complexes with finite length homology whose degree sequences form a chain.
\end{cor}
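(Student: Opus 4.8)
The plan is to realize $\FF$ as the limit of its brutal truncations and run the Boij--S\"oderberg decomposition algorithm of the proof of Theorem~\ref{thm:extremal rays refined} on $\beta(\FF)$, using codimension $n+1$ pure complexes over $R$; these exist and have the same Betti tables as over $S$ by Theorem~\ref{cor:new graded rings}. Write $M=\HH_0(\FF)$ and set $\FF^{(m)}:=[\FF_0\gets\cdots\gets\FF_m]$, a bounded minimal free $R$-complex with $\beta_{i,j}(\FF^{(m)})=\beta_{i,j}(\FF)$ for $i\le m$. The crucial first input is a positivity statement valid for the unbounded complex $\FF$ itself. Since $M$ has finite length, $\widetilde{\FF}$ is an \emph{exact} complex of vector bundles on $X=\Proj(R)$; hence $\widetilde{\FF}\otimes\cE$ is exact for every coherent sheaf $\cE$ on $X$, and the argument of Proposition~\ref{prop:exact} (flat base change over $\AA^1\setminus\{0\}$, where $\sigma$ becomes the standard inclusion after an invertible change of variables) shows that $\Phi_R(\FF,\cE)$ has torsion homology in every degree, even though it need not be bounded. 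Because $A$ has global dimension one, the computation of Proposition~\ref{lem:chi nonneg} applies verbatim to such possibly unbounded torsion complexes, so $\chi_{i,j}(\FF\cdot\cE)\geq 0$ and $\beta_{i,j}(\FF\cdot\cE)\geq 0$ for all $i,j$. By the Betti-number formula of Theorem~\ref{thm:betti numbers of pairing} (valid here by Theorem~\ref{thm:PhiX}\eqref{thm:PhiX:1}), these are precisely the linear functionals of $\beta(\FF)$ that appear as the facet functionals of the cone $\BBQ^{n+1}(R)$ in the proof of Theorem~\ref{thm:extremal rays refined}, when $\cE$ runs over supernatural sheaves of dimension $\le n$.

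Granting this, I would run the greedy algorithm exactly as in Example~\ref{example:greedy decomposition}: start with $\dd^{(0)}$ equal to the top strand of $\beta(\FF)$ restricted to columns $0,\dots,n+1$ --- a genuine codimension $n+1$ degree sequence, since minimality of $\FF$ forces the minimal generator degrees $\min\{j:\beta_{i,j}(\FF)\ne 0\}$ to increase strictly in $i$ --- and subtract the largest rational multiple $a_0\,\beta(\FF_{\dd^{(0)}})$ keeping all entries nonnegative; then repeat, at each stage subtracting the pure table attached to the minimal codimension $n+1$ degree sequence fitting under the current table. I expect this to produce, just as in the finite case, a chain $\dd^{(0)},\dd^{(1)},\dots$ of codimension $n+1$ degree sequences together with nonnegative rationals $a_k$ such that $\beta(\FF)=\sum_k a_k\,\beta(\FF_{\dd^{(k)}})$; since each column of $\beta(\FF)$ has only finitely many nonzero entries and the windows of the pure tables used march through the columns, each column is reduced to zero after finitely many steps, so the (possibly infinite) sum converges entrywise. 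As the $\FF_{\dd^{(k)}}$ are shifted pure resolutions of Cohen--Macaulay $R$-modules of codimension $n+1$, they are precisely pure complexes with finite length homology, which is the assertion.

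The main obstacle is to make this infinite algorithm rigorous: to show that it never stalls, that it clears each column in finitely many steps, and that the degree sequences it produces form a single chain rather than merely a sequence. I would handle this by a limiting argument over the truncations: for each finite coordinate box $\VV_{(\delta,\epsilon)}$ as in the proof of Theorem~\ref{thm:extremal rays refined}, the facet nonnegativity established above, together with the fact that $\beta(\FF)$ and $\beta(\FF^{(m)})$ agree in columns $0,\dots,m$, pins down the decomposition of $\beta(\FF)$ inside that box and identifies it with the finite decomposition of $\beta(\FF^{(m)})$ restricted to the box for $m\gg 0$. The delicate point is that the functionals $\chi_{i,j}(\,\cdot\,\cE)$ have support extending infinitely far to the right, so one must verify that at each step only the constraints governing the already-processed columns are needed, and that on those finitely many columns nothing changes as $m\to\infty$; this is exactly where I expect the bookkeeping to be heaviest. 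Passing to the limit over an exhaustion by such boxes then glues the finite chains into one chain and yields the claimed infinite decomposition.
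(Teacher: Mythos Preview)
Your approach has a real gap, and the paper's proof avoids it by a trick you are missing: it \emph{dualizes} before truncating.

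The problem with working directly with $\FF^{(m)}=[\FF_0\gets\cdots\gets\FF_m]$ is that the brutal truncation creates homology of codimension $0$ at position $m$ (the syzygy module $\ker(\FF_m\to\FF_{m-1})$). Since codimension sequences are required to be \emph{nondecreasing}, there is no codimension sequence $\cc$ that forces $c_i=n+1$ for $i<m$ while allowing $c_m=0$. Thus $\FF^{(m)}$ is not compatible with any $\cc$ that would pin the decomposition down to codimension $n+1$ pure tables, and Theorem~\ref{cor:new graded rings} gives you nothing useful for it. Moreover the greedy algorithm of Example~\ref{example:greedy decomposition} runs from the \emph{right}, so on $\FF^{(m)}$ it would begin at column $m$, peeling off low-codimension pure tables near the truncation; these initial steps change with $m$ and do not stabilize. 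Your attempt to sidestep this by proving positivity on the unbounded $\FF$ directly is also problematic: $\Phi_R$ is only defined on $\DD^{\text{perf}}(R)$, and the functionals $\chi_{i,j}(-\cdot\cE)$ become infinite (a priori not even convergent) sums on an unbounded Betti table. Even granting convergence, the facet inequalities you need are for the \emph{restriction} $\beta(\FF)|_{\VV_{(\delta,\epsilon)}}$, and $\chi_{i,j}$ applied to the restriction is not the same as $\chi_{i,j}$ applied to the full table.

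The paper's remedy is short: apply $\Hom(-,R)$ first. The bounded complex $\Hom(\FF_{\le e},R)$ has its large homology at the \emph{left} end (position $-e$), while all other homology groups are $\Ext^i(M,R)$, which have finite length since $M$ does. Hence $\Hom(\FF_{\le e},R)$ is compatible with the nondecreasing codimension sequence $\cc=(\dots,0,0^{*},n+1,\dots,n+1^{\zp},n+1,\dots)$ with the $0^{*}$ in position $-e$, so Theorem~\ref{cor:new graded rings} applies directly. Now the greedy algorithm starts at column $0$, the end that is independent of $e$, and the decomposition stabilizes as $e\to\infty$. Dualizing back turns each pure summand into a pure complex with finite length homology and yields the desired infinite decomposition of $\beta(\FF)$.
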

\begin{proof}
Throughout this proof, we take the convention that, for any degree sequence $\dd$, $\mathbf{G}_{\dd}$ denotes some pure complex of type $\dd$ and with same codimension as $\dd$.  Let $n=\dim(R)-1$.  For $e> n+1$, we set
\[
\FF_{\leq e}=[\dots \gets \FF_{e-1}\gets \FF_e \gets 0 \gets \dots].
\]
Since $\FF$ resolves a finite length module, $\Hom(\FF_{\leq e},R)$ is compatible with the codimension sequence $\cc=(\dots,0,0,0^*,n+1,\dots, n+1^\zp, n+1,\dots)$, where the $0^*$ lies in homological degree $-e$.  By Theorem~\ref{cor:new graded rings} and the simplicial structure of
$\BBQ^{\cc}(R)$, we can use the greedy algorithm (as in Example~\ref{example:greedy decomposition}) to decompose 
$\beta(\Hom(\FF_{\leq e},R))$ as a positive rational linear combination of pure Betti tables whose degree sequences $\dd^0<\dots <\dd^{s_e}$ are compatible with $\cc$:
\[
\beta(\Hom(\FF_{\leq e},R))=\sum_{i=0}^{s_e} a_{i}\beta(\mathbf{G}_{\dd^i}).
\]
Since the decomposition algorithm proceeds from the right to left, the rightmost steps of the decomposition of $\beta(\Hom(\FF_{\leq e},R))$ will not depend on the value of $e$.  More specifically, if
$k$ is the minimal value such that $\dd^k_{-e}\ne -\infty$, then for all $i=0, \dots, k-1$, the value of $a_i$ will not depend on $e$.   Thus, as $e\to \infty$, the decomposition stabilizes to an infinite sum
\[
\beta(\Hom(\FF,R))=\sum_{i=0}^\infty a_i\beta(\mathbf{G}_{\dd^i}).
\]
Note that $\Hom(\mathbf{G}_{\dd^i},R)$ is a pure complex with finite length homology.  We may thus dualize to obtain the desired decomposition of $\FF$.
\end{proof}
%
%\begin{example}
%Imagine $R=\kk[x,y]/(x^3)$ and that $\FF$ is the minimal free resolution of the residue field.  Then
%\begin{align*}
%\beta(\FF)&=\begin{pmatrix}1&2&1&-&-&\dots \\ -&-&1&2&1&\dots \\-&-&-&-&1 &\dots \\ \vdots &\vdots &\vdots &\vdots &\vdots &\ddots \end{pmatrix}\\
%&=\begin{bmatrix}1&1&-&-&-&\dots\\ -&-&-&-&-&\dots\\ -&-&-&-&-&\dots\\ \vdots &\vdots &\vdots &\vdots &\vdots &\ddots \end{bmatrix}+
%\begin{bmatrix}-&1&1&-&-&\dots\\ -&-&-&-&-&\dots\\ -&-&-&-&-&\dots\\ \vdots &\vdots &\vdots &\vdots &\vdots &\ddots \end{bmatrix}+\dots
%\end{align*}
%\end{example}

%\begin{example}
%Let $R=\kk[x,y,z]/(x^2,y^2)$ and let $\FF$ be the minimal free resolution of $R/(x,y)$.  Then
%\begin{align*}
%\beta(\FF)&=\begin{bmatrix}1^\zp&3&5&7&\dots \end{bmatrix}\\
%&=\begin{bmatrix}1^\zp&1&-&-&\dots \end{bmatrix}+\begin{bmatrix}-^\zp&2&2&-&\dots \end{bmatrix}+\begin{bmatrix}-^\zp&-&3&3&\dots \end{bmatrix}+\dots
%\end{align*}
%\end{example}

\begin{example}
Let $R=\kk[x,y,z,w]/(xz,xw,yz,yw)$ and let $\FF$ be the minimal free resolution of $R/(x,y,z,w)^2$.  Then
\begin{align*}
\beta(\FF)&=\begin{bmatrix}1^\zp&-&-&-&-&\dots \\ -&6&16&38&92&\dots \end{bmatrix}\\
&=\begin{bmatrix}1^\zp&-&-&\dots \\ -&3&2&\dots \end{bmatrix}
+
\begin{bmatrix}-^\zp&-&-&-&-&\dots \\ -&3&6&3&-&\dots \end{bmatrix}
+
\begin{bmatrix}-^\zp&-&-&-&-&\dots \\ -&-&8&16&8&\dots \end{bmatrix}
+
\cdots
\end{align*}
The chain of degree sequences used in this decomposition has a maximal element, but it does not have a minimal element.
\end{example}

%%%%%%%%%%%%%%%%%%%%%%%%%%%%%%%%%%%%
%%%%%%%%%%%%%%%%%%%%%%%%%%%%%%%%%%%%
\section{Toric/Multigraded generalizations}\label{sec:toric}
%%%%%%%%%%%%%%%%%%%%%%%%%%%%%%%%%%%%
%%%%%%%%%%%%%%%%%%%%%%%%%%%%%%%%%%%%
Throughout this section $X$ will denote a projective toric variety, and $R$ will denote its
 $\Pic(X)$-graded Cox ring. We let $C$ be the $\Pic(X)$-graded ring $\kk[\NE(X)]$,
 where $\NE(X)$ is the semigroup of numerically effective divisors. 
 When $\alpha\in \NE(X)$ we write $t^{\alpha}$ for the corresponding element of $C$.

We will define a pairing $\Phi_{X}: D^{b}(R) \times D^{b}(X) \to D^{b}(C)$ that is analogous to
$\Phi$ and prove Theorem~\ref{thm:Phimulti}.  We will also define functionals $\chi_{i,\alpha}$ on $D^{b}(C)$ that are analogous to the 
$\chi_{i,j}$ and prove a positivity result for these functionals.

%Throughout this section $X$ is a projective toric variety and $R=\Cox(X)$ with the natural $\Pic(X)$ multigrading.
%We define $C$ as the semigroup ring $C=\kk[\NE(X)]$, where $\NE(X)\subseteq \Pic(X)$ is the subsemigroup of effective divisors.  Given $\alpha \in \Pic(X)$ we write $t^\alpha$ for the corresponding element of $C$.  The ring $C$ is also naturally $\Pic(X)$ graded.  The ring $R$ comes equipped with a natural irrelevant ideal which we denote $\irr(R)$.

To define $\Phi_X$, let
$
\sigma\colon R\to R\otimes C
$
be the ring homomorphism $\sigma(f)=ft^{\deg(f)}$. 
Write $-\otimes_\sigma (R\otimes C)$ to denote the tensor product over $R$ with $R\otimes C$
 using the structure given by $\sigma$.
If $F$ is a $\Pic(X)$-graded  $R$-module, then 
\[
F\otimes_{\sigma} (R\otimes C)\]
is a $\Pic(X)\times \Pic(X)$-graded $R\otimes C$-module.
Thus we may define a functor $\tau_{X}$ on derived
categories that takes a  complex of graded free $R$-modules $\FF$ to
$$
\tau_{X}(\FF): =\widetilde \FF \otimes_{\sigma}\cO_{X\times \Spec(C)},
$$
a complex of graded sheaves on $X\times \Spec(C)$, with the grading coming from degrees in the coordinates on $\Spec(C)$. 
%For example, if 
%$$
%\FF: 0\rTo S(-d)\rTo^{f}S\rTo 0
%$$
%where $f$ is a form of degree $d$, then
%$$
%\tau(\FF): 0\rTo \cO_{\PP^{n}}(-d)\boxtimes A(-d)\rTo^{t^{d}f}\cO_{\PP^{n}}\boxtimes A \rTo 0
%$$
%where $P\boxtimes Q$ denotes the tensor product of the pullbacks of $P$ and $Q$ from
%$\PP^{n}$ and $\AA^{1}$, respectively. 
%This description of $\tau$ could 
%be extended to graded complexes of arbitrary finitely generated graded modules
%at the expense of replacing the tensor product with a derived tensor product, but we
%will never need this.

%\begin{defn} The functor $\Phi: D^{b}(\P^{n}) \times D^{b}(S) \to D^{b}(A)$ is given by:
%$$
%\Phi(\cE,\FF) = R\pi_{*} \left(\tau_{0}(\cE)\otimes_{\P^{n}\times\AA^{1}} \tau_{1}(\FF)\right)
%$$
%where $\pi: \PP^{n}\times \AA^{1}\to \AA^{1}$ is the projection.
%\end{defn}

We now set $\Phi_{X}: \DD^{b}(R)\times \DD^b(X) \to \DD^{b}(C)$ to be
$$
\Phi_{X}(\FF,\cE) = Rp_{2*} \bigl(\tau_{X}(\FF)\otimes_{X\times\AA^{m}} (\cE\boxtimes \cO_{\Spec(C)}) \bigr)
$$
where $\FF\in \DD^{b}(R) , \cE\in \DD^b(X)$ and  $p_2: X\times \AA^{m}\to \AA^{m}$ is the projection. 

\begin{proof}[Proof of Theorem~\ref{thm:Phimulti}]
Statement \eqref{thm:Phi:1} follows by applying essentially the same proof as used for Theorem~\ref{thm:betti numbers of pairing} above.  

We now consider statement \eqref{thm:Phi:2}. Fix $\FF\in \DD^b(R)$ and $\cE\in \DD^b(X)$ such that $\widetilde{\FF}\otimes \cE$ is exact.  We claim that $\Phi_X(\FF,\cE)$ is exact over the generic point of $\Spec(C)$.  Let $Q(C)$ be the fraction field of $C$.  After tensoring by $Q(C)$, the map $\sigma$ becomes the usual inclusion $R\subset R\otimes Q(C)$ followed by the invertible change of variables $f\mapsto ft^{\deg(f)}$.  It follows that
\[
\FF':= \bigl(\tau(\FF)\otimes_{X\times\Spec(C)} (\cE\boxtimes \cO_{\Spec(C)}) \bigr)\otimes_{\cO_{\Spec(C)}} \cO_{\Spec(Q(C))}
 \cong
 \widetilde{\FF} \otimes \cE\otimes_{\cO_{\Spec(C)}} \cO_{\Spec(Q(C))}
\]
has no homology, since $\widetilde{\FF}\otimes \cE$ is exact.

Using $p_2$ to denote the restriction of $p_2$ to $X\times \Spec(Q(C))$, it follows by a spectral sequence computation that the complex $Rp_{2*}\FF'$ has no homology.  By flat base change, this is equal to the restriction of $\Phi_{X}(\FF,\cE)$ to the generic point $\Spec(Q(C))$ of $\Spec(C)$.
\end{proof}

We define $\BBirr(R)$ as the cone of multigraded Betti tables of complexes with irrelevant homology.  
We also define $\CQ^0(X)$ as the cone of multigraded cohomology tables on $X$ and $\BBQ^1(C)$ as the cone of Betti tables of free $C$-complexes
which are generically exact.
By Theorem~\ref{thm:Phimulti}, the functor $\Phi_{X}$ induces a bilinear pairing:
\begin{equation}\label{eqn:toric cones}
\BBirr(R)\times \CQ^0(X)\to \BBQ^1(C),
\end{equation}
similar to the pairing of cones in Figure~\ref{fig:bracket}.  

We next turn to the functionals $\chi_{i,\alpha}$. The  group $\Pic(X)$ admits a natural partial order where $\alpha \geq \alpha'$ whenver $\alpha-\alpha' \in \NE(X)$.  We fix a total order $\succeq$ that refines this partial order. For $i\in \ZZ$ and $\alpha\in \Pic(X)$,
we define $\chi_{i,\alpha}: \DD^b(C)\to \QQ$ to be
\[
\chi_{i,\alpha}(\FF)= \left(\sum_{\gamma\prec \alpha} \beta_{i,\gamma}(\FF) \right) +\left(\sum_{\gamma\preceq \alpha} (-1)\beta_{i+1,\gamma}(\FF)\right) + \left(\sum_{\substack{\ell > i+1\\ \gamma\in \ZZ^m}} (-1)^\ell\beta_{\ell,\gamma}(\FF) \right).
\]
%\end{defn}

We will prove:
\begin{cor}\label{cor:chiialpha}
Let $X$ be a projective toric variety. The functionals $\chi_{i,\alpha}$ are non-negative
on the cone $\BBQ^1(C)$. Thus if $\cE$ is a vector bundle on $X$ and $\FF$ is a complex of free
$\Pic(X)$-graded 
$R$-modules that has irrelevant homology, then
\[
\chi_{i,\alpha}( \Phi_X(\FF, \cE))\geq 0.
\]
\end{cor}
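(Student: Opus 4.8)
The plan is to derive the sheaf-theoretic assertion from the statement about the cone $\BBQ^1(C)$, and then to prove non-negativity of $\chi_{i,\alpha}$ on $\BBQ^1(C)$ by a dimension count over $Q(C):=\Frac(C)$, with the total order $\succeq$ doing the work of turning the multigraded bookkeeping inside $\chi_{i,\alpha}$ into an honest rank inequality. For the reduction: if $\FF$ has irrelevant homology then every $\HH_n(\FF)$ is supported on the irrelevant ideal, so its sheafification vanishes; since sheafification is exact, $\widetilde{\FF}$ is an exact complex of coherent sheaves on $X$, and because $\cE$ is a vector bundle (hence flat) the complex $\widetilde{\FF}\otimes\cE$ remains exact. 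By Theorem~\ref{thm:Phimulti}\eqref{thm:Phi':2}, $\Phi_X(\FF,\cE)$ is then generically exact, i.e.\ $\beta(\Phi_X(\FF,\cE))\in\BBQ^1(C)$, so the cone statement applies.

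For the core claim, fix $i\in\ZZ$ and $\alpha\in\Pic(X)$, let $\Gbull$ be a bounded free $C$-complex that is generically exact, and, after replacing it by a homotopy-equivalent complex (which changes neither the Betti table nor generic exactness), assume $\Gbull$ is minimal with $\Gbull_\ell=\bigoplus_\gamma C(-\gamma)^{\beta_{\ell,\gamma}}$. The structural input I would use is that, by minimality, a nonzero entry of the matrix of $\partial_\ell$ from the summand $C(-\gamma)$ of $\Gbull_\ell$ to the summand $C(-\gamma')$ of $\Gbull_{\ell-1}$ is a nonzero homogeneous element of $C$ of degree $\gamma-\gamma'$, so $\gamma-\gamma'\in\NE(X)\setminus\{0\}$ and, since $\succeq$ refines the partial order defined by $\NE(X)$, $\gamma\succ\gamma'$. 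It follows that for every $\beta\in\Pic(X)$ the span $\Gbull^{\prec\beta}$ of the summands of $\Gbull$ generated in degrees $\prec\beta$, and the span $\Gbull^{\preceq\beta}$ of those generated in degrees $\preceq\beta$, are subcomplexes of $\Gbull$, and moreover $\partial_{i+1}$ carries $(\Gbull^{\preceq\alpha})_{i+1}$ into $(\Gbull^{\prec\alpha})_i$.

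I would then put $a=\rank_C(\Gbull^{\prec\alpha})_i=\sum_{\gamma\prec\alpha}\beta_{i,\gamma}$ and $b=\rank_C\bigl(\Gbull_{i+1}/(\Gbull^{\preceq\alpha})_{i+1}\bigr)=\sum_{\gamma\succ\alpha}\beta_{i+1,\gamma}$, and write $\rho_\ell=\rank\bigl(\partial_\ell\otimes_C Q(C)\bigr)$. Generic exactness gives $\rank_C\Gbull_\ell=\rho_\ell+\rho_{\ell+1}$ for all $\ell$, hence the telescoping identity $\sum_{\ell\ge k}(-1)^{\ell-k}\rank_C\Gbull_\ell=\rho_k$. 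The functional $\chi_{i,\alpha}$ is the multigraded analogue of $\chi_{i,j}$ from Definition~\ref{defn:chi}: a snake line separates a Betti table over $C$ into an upper region --- exactly the summands of $\Gbull_i$ in degrees $\prec\alpha$, the summands of $\Gbull_{i+1}$ in degrees $\succ\alpha$, and all of $\Gbull_\ell$ for $\ell\ge i+2$ --- on which $\chi_{i,\alpha}$ computes the corresponding alternating sum of ranks, and a lower region on which it vanishes. Substituting the telescoping identity into this description gives $\chi_{i,\alpha}(\Gbull)=a+b-\rho_{i+1}$. To finish I would bound $\rho_{i+1}$: the composite $\Gbull_{i+1}\xrightarrow{\partial_{i+1}}\Gbull_i\twoheadrightarrow\Gbull_i/(\Gbull^{\prec\alpha})_i$ annihilates $(\Gbull^{\preceq\alpha})_{i+1}$, hence factors through the rank-$b$ free module $\Gbull_{i+1}/(\Gbull^{\preceq\alpha})_{i+1}$, so over $Q(C)$ its image has dimension at most $b$; since the kernel of the projection $\Gbull_i\twoheadrightarrow\Gbull_i/(\Gbull^{\prec\alpha})_i$ is $(\Gbull^{\prec\alpha})_i$ of rank $a$, a dimension count yields $\rho_{i+1}\le a+b$, and therefore $\chi_{i,\alpha}(\Gbull)\ge 0$.

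The step I expect to be the main obstacle is the bookkeeping that yields $\chi_{i,\alpha}(\Gbull)=a+b-\rho_{i+1}$: one must check that the snake region defining $\chi_{i,\alpha}$ matches precisely the $\succeq$-truncation data above --- in particular that the alternating signs in the tail $\ell\ge i+2$ line up with the two boundary terms --- and that $\Gbull^{\prec\alpha}$ and $\Gbull^{\preceq\alpha}$ genuinely are subcomplexes, which is exactly where minimality and the compatibility of $\succeq$ with $\NE(X)$ are used. In contrast to the one-variable case --- where $C$ has global dimension $1$, every minimal free complex splits as a direct sum of two-term complexes $C(-p)\leftarrow C(-p-q)$, and Proposition~\ref{lem:chi nonneg} reduces to inspection --- there is no structure theorem for free complexes over a general affine semigroup ring, so the rank estimate has to be made directly; the role of the refinement $\succeq$ is precisely to license passing from the graded counts in $\chi_{i,\alpha}$ to an honest inequality of $Q(C)$-dimensions.
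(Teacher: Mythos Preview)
Your argument is correct and is essentially the same as the paper's: both use minimality of $\Gbull$ to see that $\partial_{i+1}$ carries degree-$\preceq\alpha$ summands into degree-$\prec\alpha$ summands, and then make a rank count over $Q(C)$. Your version is in fact a bit cleaner---you bound $\rho_{i+1}\le a+b$ directly via the factorization, whereas the paper builds a truncated ``complex'' $\Gbull'$ and argues about where its generic homology can sit, a construction that requires some care to make sense of literally.

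One small caution on the step you flagged: your verbal description of the upper region at position $i+1$ (``degrees $\succ\alpha$'') does not match the paper's definition of $\chi_{i,\alpha}$, whose $(i{+}1)$-term is $-\sum_{\gamma\preceq\alpha}\beta_{i+1,\gamma}$. Your formula $\chi_{i,\alpha}(\Gbull)=a+b-\rho_{i+1}$ is nevertheless correct: substitute $\sum_{\gamma\preceq\alpha}\beta_{i+1,\gamma}=\rank\Gbull_{i+1}-b$ and telescope the tail $\sum_{\ell\ge i+2}(-1)^{\ell-i}\rank\Gbull_\ell=\rho_{i+2}=\rank\Gbull_{i+1}-\rho_{i+1}$. (The sign in the paper's tail should be read as $(-1)^{\ell-i}$, as is implicit in its reduction to $i=0$.)
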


\begin{proof}[Proof of Corollary~\ref{cor:chiialpha}]
By Theorem~\ref{thm:Phimulti}, $\Phi_X(\FF,\cE)$ is generically exact.  So it suffices to prove that $\chi_{i,\alpha}(\Gbull)\geq 0$ whenever $\Gbull$ is generically exact.  Without loss of generality, we may assume that $i=0$ and $\alpha=0$.  We define a complex $\Gbull'$ as a projection from $\Gbull$ as follows.  If $j<0$ then $\Gbull'_j=0$; if $j>1$ then $\Gbull'_j=\Gbull_j$; if $j=0$ then $\Gbull'_0=\bigoplus_{\tau\prec 0} R(-\tau)^{\beta_{0,\tau}(\Gbull)}$; and if $j=1$ then $\Gbull'_1=\bigoplus_{\tau\preceq 0} R(-\tau)^{\beta_{1,\tau}(\Gbull)}$.  

Note that the Euler characteristic of $\Gbull'$ equals $\chi_{i,\alpha}(\Gbull)$.  So it suffices to consider where the homology of $\Gbull'$ can have positive rank.  Since $\Gbull$ is minimal, the definition of $\Gbull'$ ensures that the homology of $\Gbull'$ can only have finite rank at $\Gbull_0$ or at $\Gbull_2$, and hence the Euler characteristic will be nonnegative.
\end{proof}
\begin{example}
Let $R=\kk[x_0,x_1,y_0,y_1]$ be the Cox ring of $\PP^1\times \PP^1$, with the bigrading $\deg(x_i)=(1,0)$ and $\deg(y_i)=(0,1)$.  The irrelevant ideal of $R$ is $(x_0,x_1)\cap (y_0,y_1)$.  

Eisenbud and Schreyer have conjectured that, for some $r$, there exists a vector bundle $\cE$ on $\PP^1\times \PP^1$ of rank $9r$ with bigraded Hilbert polynomial $\chi(s_1,s_2)=r(9s_1s_2+20s_1+20s_2)$~\cite[Conjecture 2]{eis-schrey-abel}.  The existence of such a bundle would have implications for the structure of free $R$-complexes with irrelevant homology.  

For instance, this would imply that there cannot be a bigraded complex $\FF$ of $R$-modules with the following form:
\[
\FF
=\left[
R\gets 
\begin{matrix}
R^4(-2,-2)
\\
\oplus
\\
R^4(-2,-3)
\end{matrix}
\gets
\begin{matrix}
R^4(-2,-3)
\\
\oplus
\\
R^{11}(-4,-4)
\end{matrix}
\gets
\begin{matrix}
R^4(-5,-4)
\\
\oplus
\\
R^4(-4,-5)
\end{matrix}
\gets
0
\right].
\]
This follows by computing $\Phi_X(\FF, \cE)$ and then applying the same argument as in Example~\ref{ex:1441}.
%Namely, we may compute
%\[
%\Phi_X(\FF, \cE)=
%\left[
%\begin{matrix}
%C^{176r}(-2,-2)
%\\
%\oplus
%\\
%R^{184r}(-2,-3)
%\end{matrix}
%\gets 
%\begin{matrix}
%C^{184r}(-2,-3)
%\\
%\oplus
%\\
%C^{176r}(-4,-4)
%\end{matrix}
%\right]
%\]
\end{example}

The construction of $\Phi_X$ opens up new possibilities in the study of Betti tables over $R$ and, dually, in the study of cohomology tables on $X$.  
It is natural to ask whether
the map of cones satisfies duality properties similar to the duality discussed in \S\ref{sec:duality}.  This is an open question, even for $\PP^1\times \PP^1$.

%%%%%%%%%%%%%%%%%%%%%%%%%%%%%%%%%%%%
%%%%%%%%%%%%%%%%%%%%%%%%%%%%%%%%%%%%
%\subsection{Beyond Betti tables}\label{subsec:beyond}
%%%%%%%%%%%%%%%%%%%%%%%%%%%%%%%%%%%%
%%%%%%%%%%%%%%%%%%%%%%%%%%%%%%%%%%%%

\begin{bibdiv}
\begin{biblist}

%\bib{alper-easton}{article}{
%AUTHOR={Alper, Jarod},
%AUTHOR = {Easton, Robert},
%TITLE = {Recasting results in equivariant geometry: affine cosets, observable subgroups and existence of good quotients},
%JOURNAL={arXiv},
%NUMBER = {1010.1976},
%YEAR = {2010},
%}

\bib{bbeg}{article}{
      author={Berkesch, Christine},
      author={Burke, Jesse},
      author={Erman, Daniel},
      author={Gibbons, Courtney},
      title={The cone of Betti diagrams over a hypersurface ring of low embedding dimension },
      note={arXiv: 1109.5198},
      journal={J. Pure Appl. Algebra (to appear)},
      date={2011},
}

\bib{beks-local}{article}{
      author={Berkesch, Christine},
      author={Erman, Daniel},
      author={Kummini, Manoj},
      author={Sam, Steven},
      title={Shapes of free resolutions over a local ring },
      note={arXiv: 1105.2244},
      journal={Math. Annal. (to appear)},
      date={2011},
}

\bib{beks-tensor}{article}{
      author={Berkesch, Christine},
      author={Erman, Daniel},
      author={Kummini, Manoj},
      author={Sam, Steven~V},
      title={Tensor complexes: Multilinear free resolutions
        constructed from higher tensors},
      note={arXiv:1101.4604},
       journal={J. Eur. Math. Soc. (JEMS) (to appear)},
      date={2011},
}

\bib{boij-floystad}{article}{
   author={Boij, Mats},
   author={Fl{\o}ystad, Gunnar},
   title={The cone of Betti diagrams of bigraded Artinian modules of
   codimension two},
   conference={
      title={Combinatorial aspects of commutative algebra and algebraic
      geometry},
   },
   book={
      series={Abel Symp.},
      volume={6},
      publisher={Springer},
      place={Berlin},
   },
   date={2011},
   pages={1--16},
%   review={\MR{2810421}},
}

\bib{boij-sod1}{article}{
    AUTHOR = {Boij, Mats},
    AUTHOR = {S{\"o}derberg, Jonas},
     TITLE = {Graded {B}etti numbers of {C}ohen-{M}acaulay modules and the
              multiplicity conjecture},
   JOURNAL = {J. Lond. Math. Soc. (2)},
  FULLJOURNAL = {Journal of the London Mathematical Society. Second Series},
    VOLUME = {78},
      YEAR = {2008},
    NUMBER = {1},
     PAGES = {85--106},
%      ISSN = {0024-6107},
% %  MRCLASS = {13C14 (13D02 13H15)},
%  review = {MR2427053},
}

\bib{boij-sod2}{article}{
AUTHOR={Boij, Mats},
AUTHOR = {S{\"o}derberg, Jonas},
TITLE = {Betti numbers of graded modules and the Multiplicity Conjecture in the non-Cohen-Macaulay case},
JOURNAL={arXiv},
NUMBER = {0803.1645},
YEAR = {2008},
}

\bib{caldararu}{article}{
   author={C{\u{a}}ld{\u{a}}raru, Andrei},
   title={Derived categories of sheaves: a skimming},
   conference={
      title={Snowbird lectures in algebraic geometry},
   },
   book={
      series={Contemp. Math.},
      volume={388},
      publisher={Amer. Math. Soc.},
      place={Providence, RI},
   },
   date={2005},
   pages={43--75},
%   review={\MR{2182889 (2006h:14022)}},
}

\bib{ees-filtering}{article}{
AUTHOR = {Eisenbud, David},
AUTHOR = {Erman, Daniel},
AUTHOR = {Schreyer, Frank-Olaf},
TITLE = {Filtering free resolutions},
note = {arXiv:1001.0585},
}

\bib{efw}{article}{
AUTHOR = {Eisenbud, David},
AUTHOR = {Fl{\o}ystad, Gunnar},
AUTHOR = {Weyman, Jerzy},
TITLE = {The existence of equivariant pure free resolutions},
JOURNAL = {Annales de l'institut Fourier},
volume = {61},
NUMBER = {3},
date = {2011},
pages={905--926},
}

\bib{eis-floy-schrey}{article}{
   author={Eisenbud, David},
   author={Fl{\o}ystad, Gunnar},
   author={Schreyer, Frank-Olaf},
   title={Sheaf cohomology and free resolutions over exterior algebras},
   journal={Trans. Amer. Math. Soc.},
   volume={355},
   date={2003},
   number={11},
   pages={4397--4426 (electronic)},
  % issn={0002-9947},
%   review={\MR{1990756 (2004f:14031)}},
%   doi={10.1090/S0002-9947-03-03291-4},
}

\bib{eis-schrey1}{article}{
   author={Eisenbud, David},
   author={Schreyer, Frank-Olaf},
   title={Betti numbers of graded modules and cohomology of vector bundles},
   journal={J. Amer. Math. Soc.},
   volume={22},
   date={2009},
   number={3},
   pages={859--888},
%   issn={0894-0347},
   %review={\MR{2505303}},
}

\bib{eis-schrey2}{article}{
   author={Eisenbud, David},
   author={Schreyer, Frank-Olaf},
   title={Cohomology of coherent sheaves and series of supernatural bundles},
   journal={J. Eur. Math. Soc. (JEMS)},
   volume={12},
   date={2010},
   number={3},
   pages={703--722},
%   issn={1435-9855},
%   review={\MR{2639316 (2011e:14036)}},
%   doi={10.4171/JEMS/212},
}

\bib{eis-schrey-icm}{inproceedings}{
      author={Eisenbud, David},
      author={Schreyer, Frank-Olaf},
       title={Betti numbers of syzygies and cohomology of coherent sheaves},
        date={2010},
   booktitle={Proceedings of the {I}nternational {C}ongress of
  {M}athematicians},
        note={Hyderabad, India},
}

\bib{eis-schrey-abel}{article}{
   author={Eisenbud, David},
   author={Schreyer, Frank-Olaf},
   title={Boij-S\"oderberg theory},
   conference={
      title={Combinatorial aspects of commutative algebra and algebraic
      geometry},
   },
   book={
      series={Abel Symp.},
      volume={6},
      publisher={Springer},
      place={Berlin},
   },
   date={2011},
   pages={35--48},
%   review={\MR{2810424}},
}

\bib{eis-schrey-chow}{article}{
   author={Eisenbud, David},
   author={Schreyer, Frank-Olaf},
   author={Weyman, Jerzy},
   title={Resultants and Chow forms via exterior syzygies},
   journal={J. Amer. Math. Soc.},
   volume={16},
   date={2003},
   number={3},
   pages={537--579},
%   issn={0894-0347},
%   review={\MR{1969204 (2004j:14067)}},
%   doi={10.1090/S0894-0347-03-00423-5},
}

\bib{erman-semigroup}{article}{
    AUTHOR = {Erman, Daniel},
     TITLE = {The semigroup of Betti tables},
      YEAR = {2009},
      journal = {Journal of Algebra Number Theory},
      Volume = {3},
      number = {3},
      pages = {341--365},
}
%
%\bib{erman-beh}{article}{
%   author={Erman, Daniel},
%   title={A special case of the Buchsbaum-Eisenbud-Horrocks rank conjecture},
%   journal={Math. Res. Lett.},
%   volume={17},
%   date={2010},
%   number={6},
%   pages={1079--1089},
%   issn={1073-2780},
%   %review={\MR{2729632}},
%}
%
%

\bib{floystad-multigraded}{article}{
   author={Fl{\o}ystad, Gunnar},
   title={The linear space of Betti diagrams of multigraded Artinian
   modules},
   journal={Math. Res. Lett.},
   volume={17},
   date={2010},
   number={5},
   pages={943--958},
%   issn={1073-2780},
%   review={\MR{2727620 (2012c:13032)}},
}

\bib{floystad-expository}{article}{
	AUTHOR = {Fl{\o}ystad, Gunnar},
     TITLE = {Boij-S\"oderberg theory: Introduction and survey},
     NOTE = {\tt arXiv:1106.0381},
YEAR = {2011},
     }

\bib{gruson-lazarsfeld-peskine}{article}{
   author={Gruson, L.},
   author={Lazarsfeld, R.},
   author={Peskine, C.},
   title={On a theorem of Castelnuovo, and the equations defining space
   curves},
   journal={Invent. Math.},
   volume={72},
   date={1983},
   number={3},
   pages={491--506},
%   issn={0020-9910},
%   review={\MR{704401 (85g:14033)}},
%   doi={10.1007/BF01398398},
}

%\bib{herzog-srinivasan}{article}{
%   author={Herzog, J{\"u}rgen},
%   author={Srinivasan, Hema},
%   title={Bounds for multiplicities},
%   journal={Trans. Amer. Math. Soc.},
%   volume={350},
%   date={1998},
%   number={7},
%   pages={2879--2902},
%   issn={0002-9947},
%   review={\MR{1458304 (99g:13033)}},
%   doi={10.1090/S0002-9947-98-02096-0},
%}

\bib{herzog-ulrich-backelin}{article}{
   author={Herzog, J.},
   author={Ulrich, B.},
   author={Backelin, J.},
   title={Linear maximal Cohen-Macaulay modules over strict complete
   intersections},
   journal={J. Pure Appl. Algebra},
   volume={71},
   date={1991},
   number={2-3},
   pages={187--202},
%   issn={0022-4049},
%   review={\MR{1117634 (92g:13011)}},
%   doi={10.1016/0022-4049(91)90147-T},
}

\bib{kummini-sam}{article}{
	author={Kummini, M.},
	author={Sam, S.},
	title={The cone of Betti tables over a rational normal curve},
	journal={MSRI proceedings, to appear},
	note = {arXiv:1301.7005},
}

\bib{M2}{misc}{
    label={M2},
    author={Grayson, Daniel~R.},
    author={Stillman, Michael~E.},
    title = {Macaulay 2, a software system for research
	    in algebraic geometry},
    note = {Available at \url{http://www.math.uiuc.edu/Macaulay2/}},
}

\bib{miller-speyer}{article}{
   author={Miller, Ezra},
   author={Speyer, David E.},
   title={A Kleiman-Bertini theorem for sheaf tensor products},
   journal={J. Algebraic Geom.},
   volume={17},
   date={2008},
   number={2},
   pages={335--340},
%   issn={1056-3911},
%   review={\MR{2369089 (2008k:14044)}},
%   doi={10.1090/S1056-3911-07-00479-1},
}

\bib{nagel-sturgeon}{article}{
AUTHOR={Nagel, Uwe},
AUTHOR = {Sturgeon, Stephen},
TITLE = {Combinatorial Interpretations of some Boij-S{\"o}derberg Decompositions},
JOURNAL={arXiv:1203.6515},
YEAR = {2012},
}

%\bib{peskine-szpiro}{article}{
%   author={Peskine, C.},
%   author={Szpiro, L.},
%   title={Dimension projective finie et cohomologie locale. Applications \`a
%   la d\'emonstration de conjectures de M. Auslander, H. Bass et A.
%   Grothendieck},
%   language={French},
%   journal={Inst. Hautes \'Etudes Sci. Publ. Math.},
%   number={42},
%   date={1973},
%   pages={47--119},
%   issn={0073-8301},
%   review={\MR{0374130 (51 \#10330)}},
%}
\bib{MR0374130}{article}{
   author={Peskine, C.},
   author={Szpiro, L.},
   title={Dimension projective finie et cohomologie locale. Applications \`a
   la d\'emonstration de conjectures de M. Auslander, H. Bass et A.
   Grothendieck},
   language={French},
   journal={Inst. Hautes \'Etudes Sci. Publ. Math.},
   number={42},
   date={1973},
   pages={47--119},
%   issn={0073-8301},
%   review={\MR{0374130 (51 \#10330)}},
}

\bib{sam-weyman}{article}{
   author={Sam, Steven V.},
   author={Weyman, Jerzy},
   title={Pieri resolutions for classical groups},
   journal={J. Algebra},
   volume={329},
   date={2011},
   pages={222--259},
%   issn={0021-8693},
%   review={\MR{2769324 (2012e:20102)}},
%   doi={10.1016/j.jalgebra.2010.03.008},
}

\bib{stacks-project}{misc}{
        AUTHOR	= {The Stacks Project Authors},
        TITLE	= {\itshape Stacks Project}, 
        eprint = {\url{http://math.columbia.edu/algebraic_geometry/stacks-git}},
}

\end{biblist}
\end{bibdiv}

\end{document}